\documentclass[reqno,11pt]{amsart}
\usepackage{mathtools,amsmath, amsfonts, amssymb, latexsym, amsthm,
  amscd, cancel, enumerate, rotating, comment, appendix, ulem,
  makecell, paralist,times,graphicx,pgf,dirtytalk,
appendix,float}

\usepackage{extarrows}
\usepackage{thmtools}
\usepackage[colorlinks=true, pdfstartview=FitV, linkcolor=blue, citecolor=blue, urlcolor=blue]{hyperref}
\usepackage[capitalise, noabbrev]{cleveref}
\usepackage{paralist}
\usepackage{pgf}
\usepackage{times}
\usepackage{enumitem}

\usepackage{caption}
\usepackage{subcaption}
\usepackage[T1]{fontenc}   
\usepackage{comment}
\usepackage{tikz}
\usetikzlibrary{graphs}

\definecolor{dblackcolor}{rgb}{0.0,0.0,0.0}
\definecolor{dbluecolor}{rgb}{0.01,0.02,0.7}
\definecolor{dgreencolor}{rgb}{0.2,0.4,0.0}
\definecolor{dgraycolor}{rgb}{0.30,0.3,0.30}

\newcommand{\st}{\,\colon \,}
\newcommand{\qforall}{\quad \mbox{for all} \quad}
\newcommand{\qand}{\quad \mbox{and} \quad}
\newcommand{\qor}{\quad \mbox{or} \quad}

\newcommand{\qfor}{\quad \mbox{for} \quad}
\newcommand{\qforsome}{\quad \mbox{for some} \quad}
\newcommand{\qwhere}{\quad \mbox{where} \quad}
\newcommand{\qwith}{\quad \mbox{with} \quad}

\newcommand{\G}{\mathcal{G}}

\newcommand\KK{\mathbb{K}}
\newcommand\rationals{\mathbb{Q}}

\newcommand{\rhk}{\widetilde{\mathrm{H}}}

\newcommand{\MinGen}{\mathrm{MinGens}}

\def\MC{\mathcal{C}}

\def\ML{\mathcal{L}}

\def\MP{{\mathcal P}}

\def\opn#1#2{\def#1{\operatorname{#2}}} % to make operators
\opn\Top{Top}
\opn\det{det}
\opn\ini{in}
\opn\height{height}
\opn\rank{rank}
\opn\supp{supp}
\opn\vsupp{vsupp}
\opn\gcd{gcd}
\opn\Min{Min}
\opn\sp{sp}
\opn\susp{susp}
\opn\lk{lk}
\opn\del{del}
\opn\LCM{LCM}
\opn\lcm{lcm}
\opn\reg{reg}
\opn\projdim{projdim}
\opn\Tor{Tor}
\opn\Nerve{Nerve}
\def\field{\KK}
\def\rk{\mathrm{rk}}

\let\iso = \cong
\newcommand{\bx}{\mathbf{x}} 
\newcommand{\by}{\mathbf{y}} 
\newcommand{\bz}{\mathbf{z}} 
\newcommand{\bm}{\mathbf{m}} 
\newcommand{\m}{\mathbf{m}} 
\newcommand{\n}{\mathbf{n}} 
\newcommand{\bn}{\mathbf{n}} 
\newcommand{\bu}{\mathbf{u}}

\newcommand{\facets}{\mathcal{M}}
\makeatletter
\@addtoreset{equation}{section}
\def\theequation{\thesection.\@arabic \c@equation}
\makeatother

\theoremstyle{plain}
\newtheorem{theorem}{Theorem}[section]
\newtheorem{lemma}[theorem]{Lemma}
\newtheorem{proposition}[theorem]{Proposition}
\newtheorem{corollary}[theorem]{Corollary}
\theoremstyle{definition}
\newtheorem{definition}[theorem]{Definition}

\newtheorem{example}[theorem]{Example}

\newtheorem{question}[theorem]{Question}
\newtheorem{running example}[theorem]{Running Example}

\title {Pairs of lattice complements with complementary homology}

\author[S. Faridi]{Sara Faridi}
\address[S. Faridi]
{Department of Mathematics \& Statistics,
Dalhousie University,
6297 Castine Way,
PO BOX 15000,
Halifax, NS,
Canada B3H 4R2
}
\email{faridi@dal.ca}

\author[D. Veer]{Dharm Veer}
\address[D. Veer]
{Department of Mathematics, 
Purdue University, 
150 N. University Street, 
West Lafayette, 
IN 47907-2067, USA
}
\email{dveer@purdue.edu}

\author[V. Welker]{Volkmar Welker}
\address[V. Welker]
{Philipps-Universit\"at Marburg,
Fachbereich Mathematik und Informatik,
35032 Marburg, Germany
}
\email{welker@mathematik.uni-marburg.de}

\thanks{Faridi's research is supported by NSERC Discovery Grant 2023-05929.}

\subjclass{06A07, 13D02,13F55,05E45,05E40}

\keywords{lattice, complements, homology, monomial ideals, Betti numbers, subadditivity}

\begin{document}

\begin{abstract} 
In this paper we study whether there is a global to local principle for the homology of order complexes of finite lattices. 
We explore whether the non-vanishing of the homology of a lattice in degree $a+b-2$ forces the existence of a pair of lattice-complements
whose open intervals below have non-vanishing homology in degrees $a$ and $b$ respectively. We consider the same question also in the language of multigraded 
free resolutions. We provide positive evidence for the existence of such pairs and give counterexamples to strengthenings of the question. 
\end{abstract}

\maketitle

\section{Introduction}

The study of order complexes of finite lattices and their homology 
goes back to work of Folkman \cite{Fol66}. Over the last 60 years it has led 
to numerous fruitful interactions of combinatorics with a wide range of 
mathematical fields ranging from arrangements of subspaces to commutative algebra. 
While new applications continue to arise, 
basic structural properties of lattice homology remain mysterious. In the 1960's Crapo~\cite{Crapo}, building on earlier work by Rota~\cite{Rot64}, discovered that the Euler-characteristic of a lattice  is influenced by the combinatorics of {\it complements}, where two elements of a lattice are called complements if their meet is the bottom and their join is the top of the lattice. In the 1970's Baclawski~\cite[Corollary~6.3]{Bac77} showed that if a lattice  {\it non-complemented}  -- i.e. it has an element with no complement -- then it is acyclic. In the subsequent years a stronger correlation between complements and the topology of lattices was discovered, shifting the focus to the homotopy type of the lattice. It was shown that a non-complemented lattice is contractible~(\cite[Theorem~3.3]{Bjo81}). Bj\"orner and Walker~\cite[Theorem 3.2]{BW83complements} showed more generally, that if one removes the complements of 
a fixed element from the proper part of a lattice, then the corresponding order complex is
contractible. By basic topology, this fact implies that homology and homotopy type of lattices 
are encoded in the local
behavior around the set of lattice complements of any fixed element 
(see \cite{BW83complements, Bjorner98complements}
for results exploiting this fact).  

In this work we study which consequences on the local structure are implied by
the non-vanishing of global homology. 
Motivated by questions on free resolutions of monomial ideals,
we ask if global non-vanishing homology forces local non-vanishing
homology of pairs of lattice complements in complementary homological dimensions.

 The central quest of the current paper is \cref{que:main} below, which is the lattice theoretic version of  the general question above. \cref{que:main} was inspired by conjectures on degrees of syzygies of monomial ideals in polynomial rings, and the fact that these syzygies can be calculated via lattice homology.

\begin{question}[\cite{Faridilatticecomplements}] \label{que:main}
Let $L$ be a lattice, $\field$ a field, and suppose that
\begin{itemize}
\item $\rhk_k\big(\,\overline{L};\KK\,\big) \neq 0$ for some $k \geq 0$,
\item $a,b \geq -1$ and $a+b = k-2$.
\end{itemize}
Are  there complements $x$ and $y$ in $L$ such that
$$
\rhk_{a}\big(\,(\hat{0},x)_L;\KK\,\big) \neq 0
\qand 
\rhk_{b}\big(\,(\hat{0},y)_L;\KK\,\big) \neq 0?
$$
\end{question}

Evidence for a positive solution to this question is provided in 
\cite{Faridilatticecomplements,FM22breakinghomology}, where a positive
answer is established for large classes of
lattices. In \cite{synor24subadditivity} a positive answer is given for
a weak version of the question, where the element $y$ only has to be a
{\it supplement} of $x$, meaning that  the condition $x \wedge y = \hat{0}$ is not required. 
A positive answer to \cref{que:main} would reveal a deep combinatorial
property of lattices and would have strong implications in commutative algebra
and the theory of subspace arrangements.

In this work we add more positive evidence to \cref{que:main}. Using the notation $\MC(x)$ to denote the  set of complements of an element $x \in L$, we show:

\begin{theorem}\label{thm:complement}
Let $L$ be a lattice such that $\rhk_{k}(\overline{L}; \field) \neq 0$ for some $k\geq 0$. Then, for every $x\in \overline{L}$, there exists  
a complement $y \in \MC(x)$  and $z \in (\hat{0},x]_L\cap \MC(y)$ such that
$$
\rhk_{b}\big(\,(\hat{0},y)_L; \field\,\big) \neq 0
\qand 
\rhk_{a}\big(\,(\hat{0},z )_L; \field\,\big) \neq 0
$$ 
for some $a,b\geq -1$ with $a+b = k-2$.

Moreover, if $\MC(x)$ is an antichain, then for every  $b'$ such that 
$\rhk_{b'}\big(\,(\hat{0},y )_L; \field\,\big) \neq 0$ we have
$$
\rhk_{a+b'+2}\big(\,\overline{L}; \field\,\big) \neq 0\,.
$$
\end{theorem}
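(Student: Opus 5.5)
We will use the Björner--Walker complementation theorem in its homological form. Fix $x\in\overline{L}$ and let $\MC(x)$ be its set of complements. By \cite[Theorem~3.2]{BW83complements}, the order complex of $\overline{L}\setminus\MC(x)$ is contractible. Therefore the long exact sequence of the pair $(\Delta(\overline{L}),\Delta(\overline{L}\setminus\MC(x)))$ gives
$$\rhk_k(\overline{L})\cong H_k\big(\Delta(\overline{L}),\Delta(\overline{L}\setminus\MC(x))\big).$$
Every simplex of $\Delta(\overline{L})$ not in the subcomplex is a chain through at least one complement of $x$. We filter the relative complex according to which complements a chain passes through. If $\MC(x)$ is an antichain, each such chain meets exactly one $y\in\MC(x)$. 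The relative complex then splits as a wedge of local pieces, and we get
$$\rhk_k(\overline{L})\cong\bigoplus_{y\in\MC(x)}\ \bigoplus_{i+j=k-2}\rhk_i\big((\hat0,y)\big)\otimes\rhk_j\big((y,\hat1)\big).$$
This is the standard Künneth computation for the link of $y$, whose order complex is the join $\Delta(\hat0,y)*\Delta(y,\hat1)$. In general, when $\MC(x)$ is not an antichain, we instead take a spectral sequence, or induct by removing complements one at a time, starting from minimal ones or from a linear extension. At each step we use Mayer--Vietoris. We conclude that some $y\in\MC(x)$ has $\rhk_b((\hat0,y))\ne0$ and $\rhk_c(\,(y,\hat1)\text{ relative to its part outside }\MC(x))\ne0$ with $b+c=k-2$.

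Next we pass from the upper interval $(y,\hat1)$ to a lower interval below $x$. Here we apply Björner--Walker once more, now in the dual lattice $[y,\hat1]$, to the element $x\vee y=\hat1$. The cleaner route is the homotopy equivalence given by the order-preserving maps
$$(y,\hat1)\to(\hat0,x],\quad w\mapsto w\wedge x,\qquad (\hat0,x]\to(y,\hat1),\quad z\mapsto z\vee y,$$
restricted appropriately, that is, a Galois connection and Quillen fiber lemma argument. The nonvanishing of $\rhk_c$ on the upper side then forces an element $z\le x$ with $z\vee y=\hat1$ and $z\wedge y=\hat0$. The latter holds automatically, since $z\le x$ and $x\wedge y=\hat0$. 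So $z\in(\hat0,x]\cap\MC(y)$. To get $\rhk_a((\hat0,z))\ne0$ with $a=c$, we repeat the first step in $[\hat0,?]$ shifted, or apply the first step inductively in the lattice $[\hat0,x]$ relative to complements of $y$. Namely, we take the subposet $\{z\le x: z\vee y=\hat1\}$, whose homology we have shown to be nonzero in degree $c$. We decompose it by a minimal-element argument: the complement of its minimal elements is contractible, by the same kind of closure-operator argument. This yields a minimal $z$ in it with $\rhk_{a}((\hat0,z))\ne0$, where the open interval below $z$ lies in the removed part, with $a=c$ or a degree shift controlled by the link computation.

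For the ``moreover'' part, assume $\MC(x)$ is an antichain. The direct sum decomposition above is then exact rather than merely a spectral sequence. With $z$ fixed, the factor $\rhk_a$ relevant to $y$ is realized by $z$. Tensoring with any nonzero class in $\rhk_{b'}((\hat0,y))$ gives a nonzero summand in degree $a+b'+2$, hence $\rhk_{a+b'+2}(\overline{L})\ne0$. The main obstacle is the second step: identifying the upper interval homology of $y$, relative to the non-complements, with the homology of a lower interval $(\hat0,z)$ for a single $z\in(\hat0,x]\cap\MC(y)$ without losing the degree. I would first try to do this through the Galois connection $w\mapsto w\wedge x$, checking the Quillen fiber conditions carefully on the relative subposets.
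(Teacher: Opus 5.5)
\paragraph{Verdict.} The proposal has a genuine gap: the lower-side subposet is misidentified, and the extraction of $z$ uses a minimality argument that is false. Your first step and your choice of maps are right.

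\paragraph{Steps 1--2.} Your first step is essentially \cref{lem:join}, provided you fix the order. The complements of $x$ must be added back to the contractible poset $\overline{L}\setminus\MC(x)$ along a linear extension (minimal ones first). Then, at the first index where $\rhk_k$ becomes nonzero, the link of the added $y$ is $(\hat0,y)_L*\big((y,\hat1)_L\setminus\MC(x)\big)$. The outcome is $\rhk_c\big((y,\hat1)_L\setminus\MC(x)\big)\neq0$ as ordinary homology of that subposet, not a relative group.

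Your maps $w\mapsto w\wedge x$ and $z\mapsto z\vee y$ are also the right ones. They form a Galois connection, which is a clean way to phrase \cref{lem:homotopic}. But they are well defined only between $(y,\hat1)_L\setminus\MC(x)$ and $(\hat0,x)_L\setminus\MC(y)$, i.e.\ the elements below $x$ that are \emph{not} complements of $y$. The poset $\{z\le x : z\vee y=\hat1\}$, on which you then claim nonzero degree-$c$ homology, has maximum $x$, so it is a cone and contractible. The ``forced'' element $z\le x$ in $\MC(y)$ is also vacuous, since $z=x$ always qualifies. You correctly flag this step as the obstacle, and it is where the argument breaks.

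\paragraph{Extracting $z$.} Taking a minimal $z$ fails. In \cref{ex:complement}, $\rhk_1\big((\hat0,X)_L\setminus\MC(Y)\big)\neq0$. Yet the unique minimal element $Z_3$ of $(\hat0,X]_L\cap\MC(Y)$ has $(\hat0,Z_3)_L$ equal to two points, and only the non-minimal $Z_1,Z_2$ carry $\rhk_1$. A ``degree shift'' is not allowed either, since it breaks $a+b=k-2$.

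What works is \cref{lem:openhomology}:
\begin{itemize}
\item If $\rhk_c\big((\hat0,x)_L\big)\neq0$, take $z=x$.
\item Otherwise, add the elements of $(\hat0,x)_L\cap\MC(y)$ back to $(\hat0,x)_L\setminus\MC(y)$ along a linear extension, and stop at the step $j$ where $\rhk_c$ first vanishes.
\end{itemize}
The set $(\hat0,x)_L\cap\MC(y)$ is an up-set of $(\hat0,x)_L$: if $z\le w<x$ and $z\vee y=\hat1$, then $w\vee y=\hat1$ and $w\wedge y\le x\wedge y=\hat0$. Hence the link of $y_j$ is exactly $(\hat0,y_j)_L$. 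The link--deletion sequence then yields $\rhk_c\big((\hat0,y_j)_L\big)\neq0$ in the same degree.

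\paragraph{The ``moreover'' part.} The relevant tensor factor in the Bj\"orner--Walker wedge formula is $\rhk_c\big((y,\hat1)_L\big)$, not something ``realized by $z$''. It is nonzero directly from the first step, because $(y,\hat1)_L\cap\MC(x)=\emptyset$ when $\MC(x)$ is an antichain. With that correction, $\rhk_{b'}\big((\hat0,y)_L\big)\otimes\rhk_c\big((y,\hat1)_L\big)\neq0$ gives $\rhk_{c+b'+2}(\overline{L})\neq0$.
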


A second result pointing towards a positive answer considers the case
when $L$ is the face lattice of a simplicial complex; that is the set of faces
of the simplicial complex ordered by inclusion together with a
maximal element $\hat{1}$.

\begin{theorem} \label{thm:simplicialcomplex}
    Let $L = \ML(\Delta)$ be the face lattice of a simplicial complex $\Delta$. If $\rhk_{k}(\overline{L};\field) \neq 0$ (equivalently, if $\rhk_{k}(\Delta;\field) \neq 0$), then for all $a,b\geq -1$ with $a+b = k-2$, there exist $\sigma, \tau\in \overline{L}$ such that 
    \begin{enumerate}
        \item $\sigma \cap \tau = \emptyset$,
        \item $\sigma \cup \tau \not\in \Delta$, and
        \item $\dim(\sigma) = a+1$, $\dim(\tau) = b+1$. 
    \end{enumerate}
    In particular,
    \[
\tau\in \MC(\sigma), \quad \rhk_{a}\big(\,(\hat{0},\sigma )_L; \field\,\big) \neq 0  \qand \rhk_{b}\big(\,(\hat{0},\tau )_L; \field\,\big) \neq 0.
\]
\end{theorem}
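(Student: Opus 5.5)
The plan is to first dispose of the ``in particular'' part, and then reduce the main statement to a combinatorial extension property that forces every $k$-cycle to be a boundary.

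\textbf{The ``in particular'' part.} In $\ML(\Delta)$ we have $\hat{0}=\emptyset$, and the meet of two faces is their intersection. The join of $\sigma$ and $\tau$ is $\sigma\cup\tau$ if this is a face, and $\hat{1}$ otherwise. So (1) and (2) say exactly that $\tau\in\MC(\sigma)$. The interval $(\hat{0},\sigma)_L$ is the proper part of the Boolean lattice on the vertices of $\sigma$. Its order complex is therefore a sphere of dimension $\dim(\sigma)-1$, so by (3) its reduced homology is nonzero in degree $a$, and likewise for $\tau$ in degree $b$. The equivalence $\rhk_k(\overline{L})\cong\rhk_k(\Delta)$ holds because the order complex of $\overline{L}$ is the barycentric subdivision of $\Delta$.

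\textbf{Reformulation and contrapositive.} It remains to find disjoint faces $\sigma,\tau$ with $|\sigma|=a+2\ge 1$ and $|\tau|=b+2\ge 1$ such that $\sigma\cup\tau\notin\Delta$; note $|\sigma|+|\tau|=k+2$. I argue by contradiction. Suppose that for some such $a,b$, every pair of disjoint faces of these sizes has union in $\Delta$. Fix a field cycle $z\in C_k(\Delta;\field)$ that is not a boundary. The goal is to produce a vertex $v$ such that every face of $\operatorname{supp}(z)$ lies in the closed star of $v$. Then $z$ is a cycle in a cone, hence a boundary, which is a contradiction.

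\textbf{Base case $b=-1$.} Here $|\tau|=1$ and $|\sigma|=k+1$, so the assumption says that $G\cup\{v\}\in\Delta$ for every $k$-face $G$ and every vertex $v\notin G$. Pick any vertex $v$. Every $G\in\operatorname{supp}(z)$ then lies in $\operatorname{st}(v)$, so $z$ is a boundary, and we are done. The same cone argument handles $a=-1$ by symmetry.

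\textbf{General case, via links.} For general $a,b\ge 0$ I would reduce to this base case by passing to a link. Choose $G\in\operatorname{supp}(z)$ and split it as $G=\sigma_0\sqcup T$ with $|\sigma_0|=a+2$ and $|T|=b+1$. The ``link of $z$ at $T$'', namely $w=\sum_{F\supseteq T}\pm c_F\,(F\setminus T)$, is a nonzero $(a+1)$-cycle in $\lk_\Delta(T)$. The hypothesis, applied with $\tau=T\cup\{v\}$, says that $\sigma\cup\{v\}\in\lk(T)$ whenever $\sigma$ is an $(a+1)$-face of $\lk(T)$ and $v$ is a vertex of $\lk(T)$ outside $\sigma$. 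By the base-case argument inside $\lk(T)$, $w$ is a boundary there.

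\textbf{Main obstacle.} A nonzero cycle in a link can be a boundary, so the base-case argument inside $\lk(T)$ does not immediately contradict anything. The delicate step is therefore to transport ``every link-cycle is a boundary'' back to ``$z$ is a boundary''. I would try two routes.
\begin{itemize}
\item \emph{Minimal support.} Choose $z$ of minimal support among non-boundary cycles. Replace $z$ by $z-\partial(T*c)$, where $c$ is a chain in $\lk(T)$ with $\partial c=w$ (up to sign). This kills the faces containing $T$, and one then argues that the support strictly shrinks or the homology class is unchanged.
\item \emph{Induction on $b$.} Use the long exact sequence of $(\Delta,\Delta\setminus v)$ together with $\rhk_{k}(\Delta)\to\rhk_{k-1}(\lk v)$ to push a nonzero class into a link, lowering $k$ and $b$ by one. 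The hypothesis for $(a,b)$ in $\Delta$ gives the hypothesis for $(a,b-1)$ in $\lk(v)$, with $\tau\ni v$.
\end{itemize}
I expect the second route to work most cleanly when some class restricts nontrivially to a vertex link. The case where all restrictions to vertex links vanish is exactly where the cone argument is needed instead.
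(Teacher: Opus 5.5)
Your treatment of the ``in particular'' clause and of the extreme cases $a=-1$ or $b=-1$ (the cone over a vertex) is correct. For $a,b\ge 0$, however, the proposal stops exactly where the work lies, and neither suggested route closes the gap.

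In Route~1, subtracting $\partial(T*c)$ removes the faces of $z$ containing $T$. But $\partial(T*c)=\partial T*c\pm T*w$ also introduces the new $k$-faces of $\partial T*c$, so nothing forces the support to shrink. (``The homology class is unchanged'' is not an alternative; it always holds.)

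In Route~2, the induction through $\rhk_k(\Delta;\field)\to\rhk_{k-1}(\lk_\Delta(v);\field)$ applies only when the class survives in some vertex link. The remaining case, where the class comes from $\rhk_k(\del_\Delta(v);\field)$ for every vertex $v$, genuinely occurs. For example, take $k=2$, $a=b=0$, and a triangulated $S^2\times[0,1]$ whose two boundary spheres are disjoint subcomplexes. Your cone argument does not reach this case. For $b\ge 0$ the assumption does not give $G\cup\{v\}\in\Delta$ for an arbitrary $k$-face $G$ and vertex $v$: after splitting $G=\sigma\sqcup\tau'$ you would need $\tau'\cup\{v\}$ to be a face, and nothing guarantees this.

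The missing idea is to apply the hypothesis only to vertices supplied by the cycle itself. Every ridge $G\setminus\{x\}$ of a face $G\in\supp(z)$ is shared with a second face $G'=(G\setminus\{x\})\cup\{y\}\in\supp(z)$. Split $G\setminus\{x\}=\sigma'\sqcup\tau'$ with $|\sigma'|=a+1$ and $|\tau'|=b+1$. Then $\sigma'\cup\{x\}\subseteq G$ and $\tau'\cup\{y\}\subseteq G'$ are disjoint faces of the right sizes, with union $G\cup\{y\}$. (Your link observation with $T=\tau'$ gives the same conclusion.) So it suffices to find one adjacent pair in the support with $G\cup\{y\}\notin\Delta$.

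Finding such a pair needs a global potential function, not support size or vertex links. The paper uses a lexicographic one:
\begin{enumerate}
\item Order the vertices, and among all non-boundary $k$-cycles take $\gamma$ whose lexicographically smallest face $\rho$ is as large as possible.
\item Minimality of $\rho$ forces $y>x$ whenever $(\rho\setminus\{x\})\cup\{y\}\in\supp(\gamma)$.
\item If every such $y$ had $\rho\cup\{y\}\in\Delta$, then taking $x=\max\rho$ puts $\rho$ in a $(k+1)$-face $\rho\cup\{u\}$ with $u>\max\rho$.
\item Subtracting a multiple of $\partial(\rho\cup\{u\})$ then deletes $\rho$ and adds only lexicographically larger faces, contradicting the choice of $\gamma$.
\end{enumerate}
Hence some $x\in\rho$ and neighbour $y$ satisfy $\rho\cup\{y\}\notin\Delta$. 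The split above then produces $\sigma$ and $\tau$ directly, for all $a,b\ge -1$ at once.
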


We call faces $\sigma$ and $\tau$ which satisfy properties~(1) and~(2)
in \cref{thm:simplicialcomplex} {\it complementary faces}. This leads to the definition of a {\it complemented} simplicial complex. It is then a simple consequence of the definition that, analogous to  Bj\"orner and Walker~\cite{BW83complements}, if a simplicial complex is not complemented then it is contractible (\cref{thm:noncomplemented-contractible}). 

In commutative algebra, the Betti
numbers of the minimal free resolution of a monomial ideal can be expressed in
terms of the homology groups of lower intervals in the so-called LCM lattice
(see \cite{GPW99}). These implications are discussed in detail in
\cite{Faridilatticecomplements,FM22breakinghomology}, and in \cite{synor24subadditivity} the positive answer to the weak version of \cref{que:main} was used to provide a positive answer to a 
question for monomial ideals. 
 In this context, \cref{thm:complement} translates into the following statement about multigraded Betti numbers of monomial ideals.

\begin{theorem}[See \cref{c:Betti-complemented}] 
Let $I$ be a monomial ideal in a polynomial ring, and let 
$\bm$ be a monomial such that  $\beta_{i,\bm}(S/I)\neq 0$ for some $i\geq 2$. 
 Then for every nontrivial monomial $\bx$ in the lcm  lattice of $I$ which strictly divides $\bm$ there are monomials $\by,\bz$ in the lcm lattice of $I$, and positive integers $a,b$, such that 
$$ 
\beta_{b,\by}(S/I) \neq 0, 
\quad 
\beta_{a,\bz}(S/I) \neq 0, 
\qand a+b=i\,,
$$ 
and moreover 
$$
\bz \mid \bx, \quad 
\gcd(\by,\bx) \not\in I, 
\ 
\gcd(\by,\bz) \notin I, 
\qand 
\lcm(\by,\bx)=\lcm(\by,\bz)=\bm\,.
$$
\end{theorem}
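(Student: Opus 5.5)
The plan is to deduce this from \cref{thm:complement}, applied to the LCM lattice, using the Gasharov--Peeva--Welker formula \cite{GPW99}. Write $L_I$ for the lcm lattice of $I$. For $\bm \in L_I$ and $i\geq 2$ the formula reads
$$
\beta_{i,\bm}(S/I)=\dim_\KK \rhk_{i-2}\big((\hat 0,\bm)_{L_I};\KK\big),
$$
and the Betti number vanishes when $\bm\notin L_I$. We therefore work in the lattice $L=[\hat 0,\bm]_{L_I}$. Its top element is $\bm$, so $\overline{L}=(\hat 0,\bm)_{L_I}$, and the hypothesis gives $\rhk_{k}(\overline L;\KK)\neq 0$ with $k=i-2\geq 0$.

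Next, $\bx$ is nontrivial and strictly divides $\bm$, so $\bx\in\overline{L}$. By \cref{thm:complement} there exist $\by\in\MC(\bx)$ and $\bz\in(\hat 0,\bx]_L\cap\MC(\by)$ such that
$$
\rhk_{b'}\big((\hat 0,\by)_L\big)\neq 0 \qand \rhk_{a'}\big((\hat 0,\bz)_L\big)\neq 0,
$$
with $a'+b'=k-2=i-4$. Set $a=a'+2$ and $b=b'+2$. Since $a',b'\geq -1$, we get $a,b\geq 1$ and $a+b=i$. Lower intervals of $L$ coincide with those of $L_I$, so by the formula again $\beta_{b,\by}(S/I)\neq 0$ and $\beta_{a,\bz}(S/I)\neq 0$. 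The case $a'=-1$ needs a check. There $\rhk_{-1}$ of the empty interval is nonzero exactly when the element is an atom, that is, a minimal generator. This matches $\beta_{1,\bz}(S/I)\neq 0$, so the formula holds in degree $1$ under the standard convention.

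It remains to translate the lattice conditions. In $L_I$ the join is $\lcm$, and the top of $L$ is $\bm$. So the statement that $\by$ is a complement of $\bx$ and of $\bz$ in $L$ gives $\lcm(\by,\bx)=\lcm(\by,\bz)=\bm$. The relation $\bz\le \bx$ in $L$ means $\bz\mid\bx$.

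The one point that needs care is the meet condition. The meet of two elements $\bu,\bv$ in $L_I$ is the lcm of all minimal generators dividing both. So $\bu\wedge\bv=\hat 0$ holds exactly when no minimal generator divides both, which is when no minimal generator divides $\gcd(\bu,\bv)$, i.e. $\gcd(\bu,\bv)\notin I$. Applying this to the pairs $(\by,\bx)$ and $(\by,\bz)$ gives the two gcd conditions and completes the argument.
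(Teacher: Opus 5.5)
Your proof is correct and follows the paper's argument for \cref{c:Betti-complemented}: apply \cref{thm:complement} to the lcm lattice, shift homological degrees by two via \eqref{eqn:Bettiformula}, and read lattice complementation as the lcm/gcd conditions. Restricting to the interval $[\hat 0,\bm]_{L_I}$ is the natural way to obtain the version with an arbitrary $\bm$, since the paper's corollary takes $\bm=\lcm(I)$; and your bookkeeping $a'+b'=i-4$ is the right one, whereas the paper's written proof has the slip $i-2$.
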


The Goresky--MacPherson formula (see \cite[Theorem A, p.~238]{GMP88})
provides a very similar expression for the cohomology of the complement of an
arrangement of linear subspaces in terms of the homology of lower intervals in
the intersection lattice of the arrangement. We do not discuss the implications
of \cref{que:main} and our results in this setting here. Nevertheless, we note
that a positive answer to \cref{que:main} would show that the existence of a
\say{global} cohomology class for the complement of an arrangement of linear
subspaces forces the existence of two local classes at complementary subspaces
and in complementary homology degrees.

The paper is organized as follows.  In \cref{s:prelim} basic definitions
and tools from lattice theory and poset topology are reviewed. We study the meet lattice, and 
recall in \cref{p:equivalences} a number of equivalent formulations of
\cref{que:main} from \cite{Faridilatticecomplements,FM22breakinghomology}.
In \cref{sec:3} we prove \cref{thm:complement}. As a preparation we exhibit in \cref{lem:homotopic} a surprising 
homotopy equivalence between lattice intervals with complements removed.
In \cref{s:faces} we show that in the setting of face lattices of simplicial complexes \cref{que:main} has a positive answer by proving \cref{thm:simplicialcomplex}. 
In \cref{sec:meet} we explore the question whether the Cohen-Macaulayness of a lattice $L$ is inherited by the meet-lattice $L_\wedge$. In this context, we give an affirmative answer for face lattices and a counterexample for a general lattice.

In \cref{s:Betti} we then discuss the implications 
of the results from \cref{sec:3} in commutative algebra when $L$ is the
LCM lattice of a monomial ideal.
In \cref{sec:discussion} we discuss further directions and open problems.

%%%%%%%%%%%%%%%%%%%%%%%%
\section{Preliminaries and variations of \cref{que:main}  }\label{s:prelim}
%%%%%%%%%%%%%%%%%%%%%%%%

We first introduce the basic concepts and results used in this paper.

An \textit{(abstract) simplicial complex} $\Delta$ on \textit{vertex set} $V(\Delta)$ is a (finite) non-empty collection of subsets of $V(\Delta)$ such that whenever $\sigma' \in \Delta$ and $\sigma \subseteq \sigma'$, then $\sigma \in \Delta$. The elements of $\Delta$ are called \textit{faces}. The maximal faces of $\Delta$ with respect to inclusion are called \textit{facets}.
For subsets $\sigma_1,\ldots, \sigma_r \subseteq V$ of a finite set $V$ we
write $\langle\, \sigma_1,\ldots, \sigma_r \,\rangle$ for the (inclusionwise)
smallest simplicial
complex containing $\sigma_1,\ldots, \sigma_r$ as faces and call $\{ \sigma_1,\ldots, \sigma_r\}$ a 
\textit{generating set} of $\Delta$.

For any $\sigma \in \Delta$, the \textit{dimension} of $\sigma$, denoted by $\dim(\sigma)$, is one less than the cardinality of $\sigma$.  
A \textit{vertex} of $\Delta$ is a face of dimension $0$. The dimension of $\Delta$ is given by $\max\{\dim(\sigma) : \sigma \in \Delta\}$.  

To each face $\sigma \in \Delta$ one can associate the following simplicial complexes:
\begin{itemize}
    \item the \textit{ link} of $\sigma$ in $\Delta$ is $\lk_\Delta(\sigma) := \{\tau \in \Delta : \sigma \cap \tau = \emptyset \text{ and } \sigma \cup \tau \in \Delta\}$;
    \item the \textit{ deletion} of $\sigma$ in $\Delta$ is $\del_\Delta(\sigma) := \{\tau \in \Delta : \sigma \cap \tau = \emptyset\}$.
\end{itemize}

For two simplicial complexes $\Delta_1$ and $\Delta_2$ on disjoint ground sets
their \textit{ join} $\Delta_1 * \Delta_2$ is the simplicial complex on
ground set $V(\Delta_1) \cup V(\Delta_2)$ whose faces are all unions $\sigma_1 \cup \sigma_2$ for $\sigma_1 \in \Delta_1$ 
and $\sigma_2 \in \Delta_2$.  
For distinct vertices $v, w \notin V(\Delta)$, the \textit{ suspension} of $\Delta$, denoted by $\susp(\Delta)$, is defined as
\[
\susp(\Delta) = \Delta * \langle \{v\}, \{w\} \rangle.
\]

Let $P$ be a partially ordered set, poset for short, with order relation $\leq$. 
Throughout the paper, all posets will be finite. For $x \in P$ we write $P_{\leq x}$ for
the subposet $\{ y \in P~\st~y \leq x\}$. Analogously defined are $P_{< x}$, $P_{\geq x}$ and
$P_{> x}$. 
If $x \leq y$ in $P$, the \textit{open interval} $(x,y)_P$ is the subposet $P_{> x} \cap P_{< y}$.  Again, analogously defined are the \textit{ half-closed intervals} $(x,y]_P$
and $[x,y)_P$. 
For $x,y \in  P$, we say $y$ \textit{covers} $x$ if $x<y$ and there is no element $z\in P$ such that $x<z<y$.
A poset is bounded if it contains a unique minimal element $\hat{0}$
and a unique maximal element $\hat{1}$.
A poset $P$ is a \textit{lattice} if any two elements $x$ and $y$ of $P$ have a greatest lower bound or \say{meet} denoted by $x \wedge y$, and a smallest upper bound or \say{join} denoted by $x \vee y$. Since all our posets are finite, any lattice is a bounded poset.
For a bounded poset $P$ we write $\overline{P}$ for $P \setminus \{\hat{0},\hat{1}\}$. Two elements $x,y \in L$ in a lattice $L$ are called
complements if $x \vee y = \hat{1}$ and $x \wedge y = \hat{0}$.
The lattice $L$ is said to be \textit{ complemented} if every element $x\in L$ has a complement. 
For $x\in L$, let $\MC(x)$ denote the set of complements of $x$ in $L$. 

A \textit{ chain} $C$ of a poset $P$ is a totally ordered subset of $P$.  The \textit{order complex} of $P$ is a simplicial complex on vertex set $P$ whose faces are chains of $P$. 
In this article, we often identify a poset with its order complex. In particular, we talk about the homology groups of a poset and its homotopy type. 

We will study poset topology using the classical Quillen Fiber lemma.
Recall that a map of posets $f:P \to Q$ is \textit{ order preserving } if
$$
x \leq y \quad \mbox{ implies  } \quad 
f(x) \leq f(y) 
\qforall
x,y \in P\,.
$$

\begin{theorem}[Quillen Fiber Theorem A]\label{lem:quillen}\cite[Theorem~A]{Quillen73theoremA} \cite[Proposition\ 1.6]{Quillen78fibertherem}
    Let $P$ and $Q$ be two posets and $f: P\to Q$ be an  order preserving map of posets. If $f^{-1}(Q_{\leq y})$ is contractible for all $y\in Q$, then $f$ induces a homotopy equivalence  $P\simeq Q$.
\end{theorem}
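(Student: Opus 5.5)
The statement is the classical Quillen fiber lemma. I would prove it with a mapping-cylinder poset and a point-removal argument.

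\textbf{The cylinder.} Let $M = P \sqcup Q$ be the disjoint union. Keep the original orders inside $P$ and inside $Q$, and in addition declare $p < q$ for $p \in P$, $q \in Q$ whenever $f(p) \leq q$.
\begin{itemize}
\item Transitivity holds because $f$ is order preserving: if $p' \leq p$ and $f(p) \leq q \leq q'$, then $f(p') \leq f(p) \leq q'$.
\item The map $r \colon M \to Q$ with $r|_P = f$ and $r|_Q = \mathrm{id}$ is order preserving.
\item With $\iota_P \colon P \hookrightarrow M$ and $\iota_Q \colon Q \hookrightarrow M$ the inclusions, we have $f = r \circ \iota_P$ and $r \circ \iota_Q = \mathrm{id}_Q$.
\end{itemize}
So it suffices to show that $\iota_P$ and $\iota_Q$ are homotopy equivalences. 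Then $r$ is a homotopy inverse of $\iota_Q$, hence a homotopy equivalence, and so $f = r\circ\iota_P$ is one too.

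\textbf{Removing a point.} The basic tool is the following standard fact. If $x$ is an element of a poset $R$ whose link $R_{<x} * R_{>x}$ in the order complex is contractible, then the inclusion $R \setminus \{x\} \hookrightarrow R$ is a homotopy equivalence. To see this, write the order complex of $R$ as the union of the complex of $R\setminus\{x\}$ and the closed star of $x$. The star is a cone, and it meets the complex of $R\setminus\{x\}$ exactly in the link. Gluing a contractible space along a contractible subcomplex does not change the homotopy type, by the gluing lemma for CW pairs. It also suffices that only one of $R_{<x}$, $R_{>x}$ is contractible, since a join with a contractible space is contractible.

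\textbf{$\iota_Q$ is an equivalence.} Remove the elements of $P$ from $M$ one at a time, in an order that reverses a linear extension of $P$, so that maximal elements go first. When $p$ is removed, every element of $P_{>p}$ is already gone. Hence the upper set of $p$ in the current poset is exactly $Q_{\geq f(p)}$. This set has the minimum $f(p)$, so it is a cone and contractible. By the point-removal fact each step is a homotopy equivalence, and we end with $Q$.

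\textbf{$\iota_P$ is an equivalence.} Remove the elements of $Q$ from $M$ along a linear extension of $Q$, so that minimal elements go first. When $q$ is removed, $Q_{<q}$ is already gone. Hence the lower set of $q$ in the current poset is $\{p \in P : f(p) \leq q\} = f^{-1}(Q_{\leq q})$. This is contractible by hypothesis, so again each step is a homotopy equivalence, and we end with $P$.

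\textbf{Main obstacle.} The only delicate point is the bookkeeping: at each removal step the relevant lower or upper set must be exactly the fiber $f^{-1}(Q_{\leq q})$, respectively the cone $Q_{\geq f(p)}$. Removing the elements in the linear-extension orders above is what guarantees this.
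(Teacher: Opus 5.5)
The paper does not prove this statement; it quotes it from Quillen, where the poset version is a special case of Theorem~A for functors between small categories, proved with bisimplicial sets and a realization lemma. Your argument is correct, and it is a genuinely different, elementary route.

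The details check out:
\begin{itemize}
\item The cylinder $M$ is a poset: transitivity holds, and there are no relations $q<p$.
\item The map $r$ is order preserving and satisfies $r\circ\iota_Q=\mathrm{id}_Q$, so $r$ is a homotopy inverse of $\iota_Q$ once $\iota_Q$ is an equivalence.
\item In the first deletion sequence, when $p$ is removed, $P_{<p}$ is still present and $P_{>p}$ is gone. Its link is then $\Delta(P_{<p})*\Delta(Q_{\geq f(p)})$, a join with a cone.
\item In the second deletion sequence, when $q$ is removed, its link is $\Delta(f^{-1}(Q_{\leq q}))*\Delta(Q_{>q})$. This is contractible by hypothesis, including the case where the second factor is empty.
\item The point-deletion fact is sound. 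For instance, the composite $A\hookrightarrow A\cup C\to (A\cup C)/C=A/B$ is the quotient $A\to A/B$. Both quotient maps are homotopy equivalences because $B$ and $C$ are contractible subcomplexes, so the inclusion is one as well.
\end{itemize}

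Your approach gives a self-contained proof that needs only the gluing lemma for CW pairs. That suffices for the finite posets used throughout the paper, and it applies verbatim to the dual statement with fibers $f^{-1}(Q_{\geq y})$. It does rely on finiteness, through linear extensions and finitely many deletions. Quillen's categorical version instead covers arbitrary posets and small categories without that restriction.
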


As an immediate consequence, one gets the following well known fact about closure operators (see for example \cite[Corollary 10.12]{Bjornertopologicalmethods}) whose proof we
add for convenience.
Recall that a \textit{closure operator} is  an order preserving map of posets  
$$f : P \rightarrow P \quad \mbox{such that} \quad
f(p) \geq p,  \qand 
f(f(p)) = f(p) \qforall 
p \in P.$$

\begin{corollary} \label{lem:closure}
Let $f : P \rightarrow P$ be a closure operator, then $P$ and $f(P)$ are homotopy equivalent.
\end{corollary}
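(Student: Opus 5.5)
We apply \cref{lem:quillen} to the inclusion of the image. Set $Q = f(P)$, viewed as a subposet of $P$, and write $f$ as a map $f \colon P \to Q$. This map is order preserving because $f$ is. The goal is to show that for every $q \in Q$ the fiber $f^{-1}(Q_{\leq q})$ is contractible; Quillen's Fiber Theorem then gives $P \simeq Q = f(P)$.

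The key step is to identify the fiber explicitly. We claim that $f^{-1}(Q_{\leq q}) = P_{\leq q}$ for $q \in Q$.
\begin{itemize}
\item If $p \in P$ with $f(p) \leq q$, then $p \leq f(p) \leq q$, using $f(p)\ge p$. So $p \in P_{\leq q}$.
\item Conversely, if $p \leq q$, then monotonicity gives $f(p) \leq f(q)$. Since $q \in f(P)$, say $q = f(r)$, idempotence yields $f(q) = f(f(r)) = f(r) = q$. Hence $f(p) \leq q$, and $f(p)\in Q$ automatically, so $f(p) \in Q_{\leq q}$.
\end{itemize}

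The poset $P_{\leq q}$ has the unique maximum $q$. Its order complex is therefore a cone with apex $q$, and hence contractible. Thus every fiber is contractible, and \cref{lem:quillen} shows that $f$ induces a homotopy equivalence $P \simeq f(P)$.

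There is no real obstacle in this argument. The only point needing care is the identity $f(q) = q$ for $q$ in the image, which is what idempotence provides. Without it, the fiber over $q$ would be $\{p : f(p) \le q\}$, which need not equal $P_{\le q}$ and need not have a top element.
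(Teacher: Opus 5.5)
Your proof is correct and is essentially the paper's argument: apply Quillen's Fiber Theorem (\cref{lem:quillen}) to $f\colon P\to f(P)$ and observe that each fiber $f^{-1}(f(P)_{\le q})$ has $q$ as its unique maximum, so its order complex is a cone. You also make explicit two details the paper leaves implicit: that the fiber equals $P_{\le q}$, and that idempotence ($f(q)=q$) is what places $q$ in the fiber.
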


\begin{proof}  
Let $y \in f(P)$. Then by $f(p) \geq p$ we have that
$f^ {-1}\big(\,f(P)_{\leq y}\,\big)$ has $y$ as its unique maximal element. Hence the order complex is a cone and the statement  follows from \cref{lem:quillen}.  
\end{proof}

Let $L$ be a lattice and $\facets (L)$ be the set of maximal elements of $\overline{L}$. 
We call  the set 

\begin{equation}\label{eq:Lcap}
L_\wedge = \big\{\, \bigwedge_{x \in N} x \neq \hat{0}~\st~\emptyset \neq N \subseteq \facets (L)\,\big\}
\cup \{\hat{0},\hat {1}\}
\end{equation}
the  \textit{meet-lattice} of $L$. 
The definition immediately implies that $L_\wedge$ is a lattice, with meet and join defined, for $x,y \in L_\wedge$, as follows:
\begin{equation}\label{eq:meet-join-meet}
    x \wedge_{L_\wedge}  y =  x \wedge_L y
     \qand 
   x \vee_{L_\wedge}  y =    
   \bigwedge_{ \substack{z\in \facets(L)\\ x, y \leq z}} z\,.
       \end{equation}
       In particular, $x \vee_{L_\wedge}  y = x \vee_{L}  y$ if  $x \vee_{L}  y \in L_\wedge$. It is clear from the definition that $\hat{1}_{L_\wedge}=\hat{1}_L$. 
The fact that $\overline{L_\wedge} = \big\{\, \bigwedge_{x \in N} x \neq \hat{0}~\st~\emptyset \neq N \subseteq \facets (L)\,\big\}$
also guarantees that $\hat{0}_{L_\wedge}=\hat{0}_L$.

\begin{proposition} \label{lem:intlattice}
    Let $L$ be a lattice and $x \in L$.
   \begin{enumerate} 
    \item      The poset
     $ (\overline{L_\wedge})_{> x} = \overline{(L_{\geq x})_\wedge}$ 
            is homotopy equivalent to $\overline{L}_{> x}$;
    \item If $x \not\in L_\wedge$ then $\overline{L}_{> x}$ is contractible; 
    \item The posets $\overline{L_\wedge}$ and $\overline{L}$ are homotopy equivalent;
    \item If $L_\wedge$ is complemented then $L$ is complemented;
    \item If $\overline{L}$ is not contractible, then both $L$ and $L_\wedge$ are complemented.
\end{enumerate}
\end{proposition}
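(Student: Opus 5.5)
Throughout, I would work with the map
\[
f\colon \overline{L}_{>x} \to (\overline{L_\wedge})_{>x}, \qquad f(y)=\bigwedge_{z\in\facets(L),\, z\geq y} z ,
\]
that is, the meet of the coatoms above $y$. For $y\in\overline{L}$ this set of coatoms is nonempty, so $f(y)$ is a nonempty meet of coatoms. It is also $\ne \hat 0$, since $f(y)\ge y>\hat 0$. Hence $f(y)\in\overline{L_\wedge}$. The map is order preserving, satisfies $f(y)\ge y$, and is idempotent, because the coatoms above $f(y)$ are exactly the coatoms above $y$. So $f$ is a closure operator on $\overline{L}$ with image $\overline{L_\wedge}$. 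It restricts to $\overline{L}_{>x}$, since $f(y)\ge y> x$.

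The plan for the five parts is then as follows.

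\textbf{(1)} Applying \cref{lem:closure} to the restricted closure operator gives $\overline{L}_{>x}\simeq f(\overline{L}_{>x})$. It remains to check that this image equals $(\overline{L_\wedge})_{>x}$. One inclusion is clear. For the other, any $w\in\overline{L_\wedge}$ with $w>x$ is fixed by $f$.

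For the identification with $\overline{(L_{\ge x})_\wedge}$, I would note that the coatoms of $L_{\ge x}$ are the coatoms of $L$ lying above $x$, and that meets agree. Then $\overline{(L_{\ge x})_\wedge}$ consists of the meets of nonempty sets of coatoms above $x$, other than the bottom $x$. Every such meet is $\ge x$, and it lies in $(\overline{L_\wedge})_{>x}$ exactly when it is $\neq x$. Conversely, an element $w>x$ of $\overline{L_\wedge}$ is a meet of coatoms, all of which lie above $w$ and hence above $x$. This is mostly bookkeeping, including the degenerate case $x=\hat 0$, where $L_{\ge x}=L$.

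\textbf{(2)} If $x\notin L_\wedge$, then $f(x)>x$. I would show that $\overline{L}_{>x}$ is a cone with apex $f(x)$. For every $y>x$ we have $f(y)\ge f(x)$, so the chain $y\le f(y)\ge f(x)$ gives the standard two-step homotopy, namely Quillen's argument for maps $g$ with $g(p)\ge p$ and $g(p)\geq c$. Alternatively, $(\overline{L_\wedge})_{>x}$ has minimum $f(x)$, hence is a cone, and (1) gives contractibility. Here one must check that $f(x)\neq\hat 1$, which holds since $x\in\overline{L}$ is below some coatom.

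The case $x=\hat 1$ does not arise. For $x=\hat 0$ we have $x\in L_\wedge$, so (2) applies only to $x\in\overline{L}$.

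\textbf{(3)} This is (1) with $x=\hat 0$.

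\textbf{(4)} Let $x\in L$, and suppose $L_\wedge$ is complemented. Set $x'=f(x)$, or $x'=x$ if $x\in\{\hat0,\hat1\}$. Take a complement $y$ of $x'$ in $L_\wedge$. Then $x\wedge_L y\le x'\wedge y=\hat 0$.

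For the join, suppose $x\vee_L y<\hat 1$. Then there is a coatom $z\ge x,y$. Since $z\ge x$, we get $z\ge f(x)=x'$. Thus $z$ is an upper bound of $x'$ and $y$ in $L_\wedge$, contradicting $x'\vee_{L_\wedge}y=\hat1$, by \eqref{eq:meet-join-meet}. So $y$ is a complement of $x$ in $L$.

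\textbf{(5)} By Baclawski/Bj\"orner's result quoted in the introduction, a non-complemented lattice has contractible proper part. So if $\overline L$ is not contractible, then $L$ is complemented. By (3), $\overline{L_\wedge}\simeq\overline{L}$ is also not contractible, so $L_\wedge$ is complemented by the same theorem. Alternatively, (4) together with the first conclusion suffices for one direction.

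The only genuinely delicate point is the careful identification of images and intervals in (1), in particular the bottom element of $(L_{\ge x})_\wedge$. The rest follows directly from \cref{lem:closure}.
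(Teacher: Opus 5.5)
Your proposal is correct and follows the paper's proof. Parts (1)--(3) rest on the same closure operator $y\mapsto\bigwedge_{z\in\facets(L),\,z\ge y} z$ on $\overline{L}_{>x}$ together with \cref{lem:closure}, with the image having the unique minimum $f(x)$ when $x\notin L_\wedge$, and (5) combines (3) with the contractibility of the proper part of a non-complemented lattice. The only deviation is in (4): you show directly that a complement $y$ of $f(x)$ in $L_\wedge$ complements $x$ in $L$, getting $x\wedge y\le f(x)\wedge y=\hat{0}$ at once from $x\le f(x)$, rather than arguing by contradiction through the auxiliary element $u$ as the paper does; this is a small streamlining of the same idea.
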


\begin{proof}
For~(1) and (2), take $x \in L$. Observe that the minimal element of $L_{\geq x}$ is $x$ and that 
$$
\{\,z\in \facets(L) \st x \leq z\} 
= 
\facets(L_{\geq x})\,.
$$ 
Then 
$$
\overline{(L_{\geq x})_\wedge}
= 
\big\{\, \bigwedge_{z \in N} z \neq x \st \emptyset \neq N \subseteq \facets (L_{\geq x})\,\big\}
=
\{ y  \in \overline{L_\wedge} \st x < y\}
=
(\overline{L_\wedge})_{> x}\,.
$$ 
We can define the map  of posets
$f:\overline{L}_{>x} \to \overline{L}_{>x}$
where 
$$f(z)= \bigwedge_{\genfrac{}{}{0pt}{}{y \in \facets (L_{\geq x})}{z \leq y}} y 
=
\bigwedge_{\genfrac{}{}{0pt}{}{y \in \facets (L)}{x <z \leq y}} y
\qfor z \in \overline{L}_{>x}\,.
$$
Then it is easily verified that $f$ is a closure operator, and therefore by \cref{lem:closure} $\overline{L}_{>x}$ is homotopy 
equivalent to $f(\overline{L}_{>x})=\overline{(L_{\geq x})_\wedge}$.
If $x \not\in L_{\wedge}$ the latter has $\bigwedge_{z \in \facets(L_{\geq x})} z$ as its unique minimal element and hence is contractible.

If we set $x=\hat{0} \in L$, then  $\overline{(L_{\geq x})_\wedge} = \overline{L_\wedge}$ and $\overline{L}_{>x} = \overline{L}$ are homotopy equivalent. This settles~(3).

For~(4) assume $L$ is not complemented. Then there is $x \in L$
    such that $x$ has no complement in $L$. Let
    $x_\wedge \in L_\wedge$ be the meet $\bigwedge_{x \leq z \in \facets(L)} z$
    of all maximal elements of $\overline{L}$ above $x$. Then by assumption 
    $x_\wedge$ has a complement $y \in L_\wedge$ in $L_\wedge$. 
    
    Since $x$ is not complemented in $L$ the element $y$ cannot be a complement of
    $x$ in $L$.
    Thus either $x \vee y \neq \hat{1}$ or $x \wedge y \neq \hat{0}$. 
    If $x \vee y \neq \hat{1}$ then there is a
    maximal element $z \in \facets(L)$ with $x,y \leq z$. 
    But then 
    $x_\wedge \leq z$ and $x_\wedge \vee y \leq z$ contradicting the  that
    $y$ is a complement of $x_{\wedge}$ in $L_{\wedge}$. 
    It follows that  $x \wedge y \neq \hat{0}$.
    Then $u = \bigwedge_{x \wedge y \leq z \in \facets(L)} z$ satisfies 
    $u \in L_\wedge$, $u \neq \hat{0}$ and $u \leq x_\wedge,y$. This contradicts 
    the fact that $y$ is a complement of $x_{\wedge}$ in $L_\wedge$. 
    Thus the assumption is false and $L$ is complemented. 
    
    Statement~(5) follows from Statement~(3) and the Bj\"orner and Walker's result~\cite{BW83complements} that non-comple\-men\-ted lattices are contractible.
 \end{proof}

  The converse of \cref{lem:intlattice}~(4) is false even if $L$ is the face lattice of
  a simplicial complex, where $\wedge$ is replaced with $\cap$. Consider the simplicial complex
  $\Delta$ on ground set $\{1,2,3,4\}$ and maximal simplices $\{1,2\},\{2,3\},\{3,4\}$.
  Then $\ML(\Delta)$ is easily seen to be complemented. On the other hand
  $$\ML_\cap(\Delta)
  = \big\{\, \hat{0},\hat{1},\{2\},\{3\},\{1,2\},\{2,3\},\{3,4\}\,\big\},$$
  in which $\{2,3\}$ has no complement. However,  $\{1\}$ is a complement of
  $\{2,3\}$ in $\ML(\Delta)$.

 We now prepare for the reformulation of \cref{que:main} in terms of 
commutative algebra.
For a simplicial complex $\Delta$ a \textit{ non-face} is a subset $\rho \subseteq V(\Delta)$
such that $\rho \not\in \Delta$. A non-face $\rho$ is called \textit{minimal non-face} if all of its proper
subsets are faces of $\Delta$. Let $S_\Delta = \field[\,x_v \st v\in V(\Delta)\,]$  be a polynomial
ring  over the field $\field$, and for  a subset $\rho$ of $V(\Delta)$ let $x_\rho = \prod_{v \in \rho} x_v$ be a square-free monomial ideal in $S_\Delta$. 
The \textit{Stanley-Reisner ideal}  of $\Delta$ is the monomial ideal 
$$
I_\Delta=(x_\rho \st \rho 
\mbox{ minimal non-face of }
\Delta)\,.
$$
For $\rho \subseteq V(\Delta)$, let $\rho^c=V(\Delta) \setminus \rho$. Then 
the simplicial complex $\Delta^{\vee}=\{\rho^c \st \rho  \notin \Delta\}$ is called the \textit{Alexander Dual}
of $\Delta$. By definition, 
\begin{equation}\label{eq:alex-facets}
\rho^c \mbox{ facet of } \Delta^ \vee
\iff 
\rho \mbox{ minimal non-face of } \Delta 
\iff 
x_\rho \mbox{ minimal generator  of } I_\Delta 
\,.
\end{equation}
More generally, if  $\G$ is a set of squarefree monomials in $S_\Delta$, then  
\begin{equation}\label{eq:dual-gens}
\Delta^\vee=\langle \rho^c \st x_\rho \in \G \rangle
\iff 
I_\Delta=(\G)
\,.
\end{equation}

A (not necessarily squarefree) monomial ideal $I$ in $S = \field[x_1,\ldots, x_n]$
has a unique minimal monomial generating set
$\MinGen(I)$ (see, for example,~\cite[Proposition 1.1.6]{HHmonomialideals}). For an arbitrary monomial generating set $\G$ of $I$
 its \textit{LCM-lattice} is the set $\LCM(\G)=\big\{\lcm(\G')~\st~\G' \subseteq \G\,\big\}$
 ordered by divisibility, where we adopt the convention that
 $\lcm(\emptyset) = 1 \in \field$. It is easily seen that $\LCM(\G)$ is a lattice
 with unique minimal element $\hat{0} = 1 \in \field$ and unique maximal element
 $\hat{1} = \lcm(\G)$. We write $\LCM(I)$ for $\LCM(\G)$ when $\G = \MinGen(I)$ is the unique minimal monomial
 generating set of $I$. 
 The LCM-lattice can be used to determine the multigraded
 Betti numbers $\beta_{i,\m}(S/I)$ 
 of $S/I$ for a monomial ideal $I$ generated by a set of monomials $\G$ (\cite[Proposition 58.11]{Pee11}, see \Cref{s:Betti} for more details).

 \begin{equation}\label{eq:lcm2} 
  \beta_{i,\m}(S/I) = 
\begin{cases}
 \dim_\KK\big( \,\rhk_{i-2}(\,(\hat{0},\m)_{\LCM(\G)};\KK\,)\,\big) & \text{ for }
i \geq 1 \text{ and } \m \in \LCM(\G) \setminus \{\hat{0}\}\\
0 & \text{ otherwise}\,.
\end{cases}
\end{equation} 
We also write $\beta_{i,i+j}(S/I)$ for the sum of $\beta_{i,\m}$ over
all monomials $\m$ of degree $i+j-1$.

  For a monomial $\m = x_1^{a_1}\cdots x_n^ {a_n}$ we define its polarization as the monomial
 $$\MP(\m) = x_{1,1}\cdots x_{1,a_1} \cdots x_ {n,1}\cdots x_{n,a_n}$$
 in the polynomial ring containing the variables $x_{i,j}$ for $1 \leq i \leq n$ and
 $1 \leq j \leq a_i$. 
If $\G$ is a generating set of $I$ consider
 $\MP(\G) = \{\, \MP(\m)~\st~\m\in \G\,\}$. Then $\LCM(\G)$ and $\LCM(\MP(\G))$ are isomorphic. 
 If $\G$ in addition satisfies that $\m$ divides $\lcm(\,\MinGen(I)\,)$ for all $\m \in \G$, then 
 we call the squarefree monomial ideal $\MP(I)$
 the \textit{polarization} of $I$. Note that under the hypothesis that $\m$ divides $\lcm(\,\MinGen(I)\,)$ for all $\m \in \G$, all $\G$ lead to the same $\MP(I)$.

For a simplicial complex $\Delta$, its \textit{ face lattice} $\ML(\Delta)$ is the poset on 
$\Delta \cup \{\hat{1}\}$ with ordering by inclusion and $\hat{1}$ as the unique maximal
element. It is easily seen that $\ML(\Delta)$ is a lattice with
$\hat{0} = \emptyset$ as its unique minimal element. 
The order complex of $\overline{\ML(\Delta)}$ is the barycentric subdivision
of $\Delta$ and hence homeomorphic to $\Delta$.

For a nonempty set $\Gamma$ of subsets of a finite set, we define $\ML(\Gamma)$ to be the set $\Gamma \cup \{\hat{0}=\emptyset,\hat{1}\}$
considered as a subposet of $\ML(\langle \Gamma \rangle)$.

For a lattice $L$, let $\Gamma_L$ be a set of subsets of $\overline{L}$ defined as  
     $$
     \Gamma_L=\{L_{\leq x} \st x \in \overline{L}\} \,.
     $$
Then $\Gamma_L$ is a poset under inclusion, and we can define a lattice $\ML(\Gamma_L)$ on the poset $\Gamma_L \cup \{\hat{0},\hat{1}\}$ with join and meet operations defined as 
       $$
    L_{\leq x} \wedge_{\ML(\Gamma_L)} L_{\leq y} =  L_{\leq x \wedge y} = L_{\leq x} \cap L_{\leq y} 
     \qand 
    L_{\leq x} \vee_{\ML(\Gamma_L)} L_{\leq y} =    L_{\leq x \vee y} \,.
    $$     
 The following lemma is a simple reformulation of
 the well known result that any lattice can be embedded as a meet-semilattice into a Boolean lattice. The formulation of the lemma is more suitable for our purpose to reformulate \cref{que:main} in  terms of simplicial complexes. 
 Recall that the \textit{order dual} of a poset $P$ ordered by $\leq$ is the poset $P^ *$ on the same set with order relation $\leq_*$ defined by
 $$
 p \leq_* q 
  \iff
 q \leq p\,.
 $$
 \begin{lemma} \label{lem:represent} \label{lem:lcm}
     Let $L$ be  a lattice, and   $\Delta_L$ be the simplicial complex generated by the set $\Gamma_L$. Then 
     \begin{enumerate} 
     \item $L\cong \ML(\Gamma_L)$; 
      \item $L \cong \LCM(\G)^*$ where $\G$ is a generating set of $I_{\Delta_L^\vee}$. 
 \end{enumerate}
 \end{lemma}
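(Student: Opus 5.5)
Both parts come down to comparing down-sets. Throughout we read $\Gamma_L$ as consisting of the sets $\sigma_x := L_{\leq x}\cap \overline{L}$ for $x \in \overline{L}$, so that all of them are subsets of the vertex set $V := \overline{L}$.

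\emph{Part (1).} Define $\varphi\colon L \to \ML(\Gamma_L)$ by $\varphi(\hat 0)=\hat 0$, $\varphi(\hat 1)=\hat 1$ and $\varphi(x)=\sigma_x$ for $x\in\overline{L}$. Since $x \in \sigma_x$, we have $\sigma_x\subseteq \sigma_y$ if and only if $x\leq y$. Hence $\varphi$ is injective and is an order embedding on $\overline{L}$. It is surjective by the definition of $\Gamma_L$, and it clearly respects $\hat 0$ and $\hat 1$. A bijection between finite posets that preserves and reflects order is a poset isomorphism, hence a lattice isomorphism. It is compatible with the stated meet and join because $\sigma_x\cap\sigma_y=\sigma_{x\wedge y}$ whenever $x\wedge y\neq \hat 0$.

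\emph{Part (2).} First I would identify an explicit generating set. Let $\Delta_L=\langle \sigma_x \st x\in\overline{L}\rangle$ on $V$. Apply \eqref{eq:dual-gens} to the complex $\Delta_L^\vee$, using $(\Delta_L^\vee)^\vee=\Delta_L$. This shows that
$$\G_0=\{\,x_{V\setminus \sigma_x} \st x\in \overline{L}\,\}$$
generates $I_{\Delta_L^\vee}$, because $(V\setminus\sigma_x)^c=\sigma_x$ and these sets generate $\Delta_L$.

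Next I would compute the LCM lattice. For $\emptyset\neq N\subseteq\overline{L}$,
$$\lcm\{x_{V\setminus\sigma_x} \st x\in N\}=x_{V\setminus\bigcap_{x\in N}\sigma_x}.$$
The intersection $\bigcap_{x\in N}\sigma_x$ equals $\sigma_{\bigwedge N}$ if $\bigwedge N\neq\hat 0$, and equals $\emptyset$ otherwise. So define $\psi\colon L\to \LCM(\G)$ by
$$\psi(\hat 1)=1,\qquad \psi(x)=x_{V\setminus\sigma_x}\ \text{ for } x\in \overline{L},\qquad \psi(\hat 0)=x_V.$$
By Part (1), $\psi$ is injective and order reversing in both directions, since divisibility of squarefree monomials is containment of supports. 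The computation above shows that its image is closed under lcm and exhausts the lcm's of subsets. Therefore $\psi$ is an isomorphism $L\cong\LCM(\G)^*$.

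\emph{Expected obstacle.} The subtle point is $\hat 0$. The monomial $x_V$ appears in $\LCM(\G_0)$ only if some subset of $\overline{L}$ has meet $\hat 0$. This fails, for instance, when $L$ is a chain: then $\overline{L}$ has a unique minimal element $a$, and $\LCM(\G_0)$ has top element $x_{V\setminus\sigma_a}$ with nothing above it corresponding to $\hat 0$. I would repair this by taking $\G=\G_0\cup\{x_V\}$. This does not change the ideal, because $x_V$ is divisible by every element of $\G_0$; this uses that $\overline{L}\neq\emptyset$, and the case $\overline{L}=\emptyset$ is treated separately as trivial. With this $\G$, the value $\psi(\hat 0)=x_V=\lcm(\G)$ is always attained. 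Accordingly, I would read ``a generating set'' in the statement as this particular (possibly non-minimal) generating set, which is permitted since $\LCM(\G)$ was defined for arbitrary generating sets.
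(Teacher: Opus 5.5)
Your argument is the paper's proof. In (1) you use the down-set map, and in (2) you use the generating set $\G_0=\{x_{V\setminus\sigma_x}\}$ from \eqref{eq:dual-gens}, with lcm's corresponding to intersections and hence to meets. Your treatment of $\hat 0$ is correct, and it addresses something the paper's proof passes over silently. If $L$ has a unique atom, then $x_V\notin\LCM(\G_0)$, and adjoining $x_V$ repairs this without changing the ideal.

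There is, however, a dual boundary problem at $\hat 1$ that you missed, and it breaks the claimed injectivity of $\psi$. Part (1) only gives injectivity on $\overline{L}$; the values at $\hat 0$ and $\hat 1$ have to be checked separately, and the check fails at $\hat 1$.

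\emph{Where it fails.} If $L$ has a unique coatom $c$, then $\sigma_c=\overline{L}=V$. Hence $\psi(c)=x_{V\setminus\sigma_c}=x_\emptyset=1=\psi(\hat 1)$. In this case $\Delta_L$ is the full simplex on $V$, $\Delta_L^\vee$ is the void complex, and $I_{\Delta_L^\vee}$ is the unit ideal. Then $\LCM(\G)$ has only $|L|-1$ elements, so no isomorphism with $L$ is possible. This happens for every chain, i.e.\ for the very example you used to motivate the $\hat 0$ repair: for $L=\{\hat 0<a<\hat 1\}$ you get $\G=\{1,x_a\}$, and $\LCM(\G)$ is a two-element chain.

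\emph{A fix symmetric to yours.} Enlarge the ground set to $V'=V\cup\{\ast\}$, where $\ast$ is a ghost vertex lying in no $\sigma_x$. Then dualize with respect to $V'$ and take $\G=\{x_{V'\setminus\sigma_x}\st x\in\overline{L}\}\cup\{x_{V'}\}$. Every element of $\G$ is divisible by $x_\ast$, so $1$ arises only as $\lcm(\emptyset)$. Also $x_{V'}$ arises exactly from subsets with meet $\hat 0$. The rest of your argument then goes through unchanged.

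In the application in \cref{p:equivalences}, $L$ has non-vanishing homology and is therefore complemented. So it has at least two atoms and at least two coatoms, and neither boundary case arises there. But the lemma is stated for all lattices, and the paper's own sketch has the same two boundary issues.
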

 
 \begin{proof}      
     Define $\phi:L \to \ML(\Gamma_L)$ by setting 
     $$
     \phi(x)=L_{\leq x} \qfor x \in \overline{L}, 
     \quad 
     \phi(\hat{0}) = \hat{0}, 
     \qand 
     \phi(\hat{1}) = \hat{1}\,.
     $$
     The map $\phi$ is clearly order preserving and bijective, and moreover $\phi^{-1}$ is also order preserving. Hence $\phi$ is a lattice isomorphism.

Now consider the set of monomials 
$\G = \{ \bx_{\rho} \st \rho^c \in \Gamma_L\,\}$ which is in clear
bijection to $\Gamma_L$. By \eqref{eq:dual-gens}, $\G$ is
a generating set of $I_{\Delta_L^ \vee}$. By construction, $\G$ ordered by divisibility is order
dual to $\Gamma_L$ ordered by set inclusion. In addition, the lcm of monomials 
$\bx_{\rho_1} ,\ldots, \bx_{\rho_r}$ 
in $\G^\vee$ is $x_{\rho}$ for $\rho = \bigcup_{i=1}^r \rho_i$, hence $\rho^c = \bigcap_{i=1}^r \rho_i^c$.
Since set intersection is the meet operation in $\ML(\Gamma_L)$ and taking lcms is the join 
operation in $\LCM(\G^\vee)$ the claim in~(2) follows.
\end{proof}

 The following proposition from \cite{FM22breakinghomology} is now an immediate consequence. In the statement below, for a monomial ideal $I$,  let $\lcm(I)$ denote the least common multiple of the unique minimal generating set of $I$.

 \begin{proposition}\label{p:equivalences}
     For integers $k\geq 0$ and $a,b\geq -1$, with $k-2=a+b$ the following statements are equivalent.
    \begin{itemize}
         \item[($L_{\leq}$)] For all lattices $L$ with $\rhk_{k} (\overline{L};\field) \neq 0$
         there are $x,y \in L$ and $x \vee y = 1$, $x \wedge y = \hat{0}$ such that
         $$\rhk_{a}\big(\,(\hat{0},x)_L;\field\,\big) \neq 0 \qand
          \rhk_{b}\big(\,(\hat{0},y)_L;\field\,\big) \neq 0.$$
           \item[($L_{\geq}$)] For all lattices $L$ with $\rhk_{k} (\overline{L};\field) \neq 0$
         there are $x,y \in L$ and $x \vee y = 1$, $x \wedge y = \hat{0}$ such that
         $$\rhk_{a}\big(\,(x,\hat{1})_L;\field\,\big) \neq 0 \qand 
          \rhk_{b}\big(\,(y,\hat{1})_L;\field\,\big) \neq 0.$$
              \item[($L_\wedge$)] For all lattices $L$ with $L = L_\wedge$ and
              $\rhk_{k} (\overline{L};\field) \neq 0$
         there are $x,y \in L$ and $x \vee y = 1$, $x \wedge y = \hat{0}$ such that
         $$\rhk_{a}\big(\,(x,\hat{1})_L;\field\,\big) \neq 0 \qand 
          \rhk_{b}\big(\,(y,\hat{1})_L;\field\,\big) \neq 0.$$
          
          \item[($\Delta$)] For all simplicial complexes $\Delta$ with
          $\rhk_{k} (\Delta;\field) \neq 0$ there are faces $\sigma,\tau \in \Delta$
          and $\sigma \cap \tau = \emptyset$ and $\sigma \cup \tau \not\in \Delta$
          for which 
          $$
          \rhk_{a}\big( \,\lk_\Delta(\sigma);\field\,\big) \neq 0
          \qand  
          \rhk_{b}\big(\,\lk_\Delta(\tau);\field\,\big) \neq 0.
          $$
          
          \item[($I$)] For all monomial ideals $I$  for which 
          $\rhk_{k}\big(\overline{\LCM(I)};\field\,\big)
          \neq 0$ there are 
          $$\m,\m' \in \LCM(I) \qwith 
          \lcm(\m,\m') =  \lcm(I) \qand 
          \gcd(\m,\m')\notin I
          $$ 
          such that
          $$\rhk_{a}\big(\,(\hat{0},\m)_{\LCM(I)};\field\,\big) \neq 0
          \qand  
          \rhk_{b}\big(\,(\hat{0},\m')_{\LCM(I)};\field\,\big) \neq 0;$$
          or, equivalently,
          $$\beta_{a+2,\m}(S/I)
          \neq 0
          \qand  
          \beta_{b+2,\m'}(S/I)
          \neq 0.$$  
         
     \end{itemize}
 \end{proposition}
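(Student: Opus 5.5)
The plan is to prove the equivalences as the chain $(L_{\leq})\Leftrightarrow(L_{\geq})\Leftrightarrow(L_\wedge)$, then connect $(\Delta)$ to the lattice statements, and finally connect $(I)$ to $(L_{\leq})$. Each step is a translation that preserves three things: the relevant homology of $\overline{L}$, the complement relation, and the relevant local intervals. I fix $k$, $a$, $b$ throughout.

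\emph{Step 1: $(L_{\leq})\Leftrightarrow(L_{\geq})$ by order duality.} The order dual $L^*$ of a lattice $L$ is a lattice in which meet and join are interchanged. Hence $x,y$ are complements in $L$ if and only if they are complements in $L^*$. Moreover $\overline{L^*}$ and $\overline{L}$ have the same order complex, and $(\hat{0},x)_{L^*}=\big((x,\hat{1})_L\big)^*$ has the same order complex as $(x,\hat 1)_L$. Applying either statement to $L^*$ therefore yields the other.

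\emph{Step 2: $(L_{\geq})\Leftrightarrow(L_\wedge)$.} The forward direction is a specialization. For the converse, let $\rhk_k(\overline L;\field)\neq0$.
\begin{itemize}
\item By \cref{lem:intlattice}(3), $\rhk_k(\overline{L_\wedge};\field)\neq0$. Since $(L_\wedge)_\wedge=L_\wedge$, statement $(L_\wedge)$ gives complements $x,y$ of $L_\wedge$ with the required homology of $(x,\hat1)_{L_\wedge}$ and $(y,\hat1)_{L_\wedge}$.
\item By \cref{lem:intlattice}(1), $(x,\hat1)_{L_\wedge}=(\overline{L_\wedge})_{>x}\simeq\overline L_{>x}=(x,\hat1)_L$, and similarly for $y$, so the homology conditions transfer to $L$.
\item For complements, \eqref{eq:meet-join-meet} gives $x\wedge_L y=x\wedge_{L_\wedge}y=\hat0$. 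If $x\vee_L y\neq\hat1$, some coatom $z\in\facets(L)$ lies above both $x$ and $y$. Then $x\vee_{L_\wedge}y\leq z<\hat1$, a contradiction. So $x,y$ are complements in $L$.
\end{itemize}

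\emph{Step 3: relating $(\Delta)$ to the lattice statements.} For $L=\ML(\Delta)$ the following dictionary holds:
\begin{itemize}
\item $\overline L$ is the barycentric subdivision of $\Delta$, so it has the same homology as $\Delta$;
\item $(\sigma,\hat1)_L\cong\overline{\ML(\lk_\Delta\sigma)}$ via $\rho\mapsto\rho\setminus\sigma$, so it has the homology of $\lk_\Delta(\sigma)$;
\item $\sigma,\tau$ are complements in $L$ exactly when $\sigma\cap\tau=\emptyset$ and $\sigma\cup\tau\notin\Delta$.
\end{itemize}
Thus $(\Delta)$ is precisely $(L_{\geq})$ restricted to face lattices, which gives $(L_{\geq})\Rightarrow(\Delta)$.

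For $(\Delta)\Rightarrow(L_\wedge)$, take $L$ with $L=L_\wedge$ and use \cref{lem:represent}, so that $L\cong\ML(\Gamma_L)$ with $\Gamma_L$ consisting of the facet-intersections of $\Delta_L$. Then $\ML(\Delta_L)_\wedge\cong L_\wedge=L$, so by \cref{lem:intlattice}(3) $\Delta_L$ has the same homology as $\overline L$. Statement $(\Delta)$ then produces faces $\sigma,\tau$ of $\Delta_L$. I would push them to $L$ with the closure map $c(\sigma)=\bigcap\{F \text{ facet of } \Delta_L: \sigma\subseteq F\}$, which takes values in $\ML(\Delta_L)_\wedge\cong L$. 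By \cref{lem:intlattice}(1), $(c(\sigma),\hat1)\simeq\lk(\sigma)$, so the homology conditions carry over. The join condition also survives: no facet contains $\sigma\cup\tau$, hence none contains $c(\sigma)\cup c(\tau)$.

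\emph{Main obstacle.} The difficulty is the meet condition in Step 3. Disjointness of $\sigma$ and $\tau$ does not obviously force $c(\sigma)\cap c(\tau)=\emptyset$. My first attempt would be the argument of \cref{lem:intlattice}(4): if $c(\sigma)\cap c(\tau)$ were nonempty, it would give a nonzero element of $L_\wedge$ below both. The aim would be to contradict the choice of $\sigma,\tau$, for instance by choosing $\sigma,\tau$ minimal among faces with the same link homology. If that fails, I would instead realize $L$ directly as $\ML_\wedge$ of a simplicial complex whose vertices are the atoms of $L^*$. In that model the closure of a face is its join, and disjointness of closures is built in; one then applies Step 1 to change orientation.

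\emph{Step 4: $(I)\Leftrightarrow(L_{\leq})$.} By \cref{lem:lcm}(2), every lattice arises as $\LCM(\G)^*$ for a squarefree monomial generating set $\G$. Together with Step 1, $(I)$ ranges over all lattices. In $\LCM(I)$, $\lcm$ is the join, and since elements of $\LCM(I)$ are lcms of generators, $\gcd(\m,\m')\notin I$ says exactly that no minimal generator divides both $\m$ and $\m'$, that is, their meet is $\hat0$. The Betti reformulation is \eqref{eq:lcm2} with $i=a+2$ and $i=b+2$, respectively.
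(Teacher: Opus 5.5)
\paragraph{The meet condition in $(\Delta)\Rightarrow(L_\wedge)$.}
Your Steps~1 and~2 and the implication $(L_{\geq})\Rightarrow(\Delta)$ match the paper. However, $(\Delta)\Rightarrow(L_\wedge)$ stops at exactly the obstacle you flag, and this is a genuine gap, although neither of your fallbacks is needed to close it. The missing idea is \cref{lem:intlattice}(2) applied to $\ML(\Delta_L)$.

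The interval $(\sigma,\hat 1)_{\ML(\Delta_L)}$ is isomorphic to the poset of nonempty faces of $\lk_{\Delta_L}(\sigma)$, which has $\rhk_a\neq 0$. So this interval has nonvanishing reduced homology. \cref{lem:intlattice}(2) then forces $\sigma\in\ML(\Delta_L)_\cap$. In other words, $\sigma$ is already an intersection of facets and $c(\sigma)=\sigma$; likewise $c(\tau)=\tau$. Since the meet in $\ML(\Delta_L)_\cap\cong L$ is set intersection, you get $\sigma\wedge\tau=\sigma\cap\tau=\emptyset=\hat 0$. This is how the paper closes the step.

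Your claim that \cref{lem:intlattice}(1) gives $(c(\sigma),\hat 1)\simeq\lk(\sigma)$ is valid only because of this fact. That lemma concerns $(\overline{L_\wedge})_{>\sigma}$. If $c(\sigma)\supsetneq\sigma$, this set equals $[c(\sigma),\hat 1)$, which is a cone, whereas $(c(\sigma),\hat 1)$ reflects $\lk(c(\sigma))$. So the single observation $c(\sigma)=\sigma$ both justifies your homology transfer and removes the meet obstacle.

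\paragraph{Step~4.}
Your Step~4 takes a genuinely different route from the paper, which relates $(\Delta)$ and $(I)$ through polarization and the Eagon--Reiner formula. Your translation of $\gcd(\m,\m')\notin I$ into $\m\wedge\m'=\hat 0$ in $\LCM(I)$ is correct, so $(L_{\leq})\Rightarrow(I)$ is fine.

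The converse, however, is not justified as written. $\LCM(I)$ means $\LCM(\MinGen(I))$, which is always atomic. The generating set $\G$ of \cref{lem:lcm}(2) is not minimal as soon as $\overline L$ is not an antichain, so $(I)$ does not range over all lattices. In that construction the minimal generators are the $x_\rho$ with $\rho^c=\overline L_{\leq z}$ for $z\in\facets(L)$, and their LCM lattice is dual to $L_\wedge$, not to $L$.

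The repair is to prove $(I)\Rightarrow(L_\wedge)$ instead. If $L=L_\wedge$ and $\overline L$ is non-acyclic, then the meet of all coatoms is $\hat 0$ and $\LCM(\MinGen(I))^*\cong L$. Your Steps~1 and~2 then finish the argument. Alternatively, you can compare $\LCM(\G)$ with $\LCM(I)$ using that \eqref{eq:lcm2} holds for an arbitrary generating set. With this fix, your direct lattice treatment of $(I)$ is a self-contained alternative that avoids polarization and Eagon--Reiner.
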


 \begin{proof} The equivalence of ($L_{\leq }$) and ($L_{\geq}$) follows immediately from the
 fact that  the order dual of a lattice is a lattice, and the bijection between the (maximal) chains of the two posets leads to the fact the order complexes of a poset
 and its order dual are isomorphic.
 
 The statement of ($\Delta$) is ($L_{\geq}$) when applied to $L = \ML(\Delta)$. Note that 
 for $\sigma \in \Delta$ one has $(\sigma,\hat{1})_{\ML(\Delta)} = \overline{\ML(\lk_\Delta(\sigma)}$ 
 and that a simplicial complex is homeomorphic to the order complex of its face lattice.
 So ($L_{\geq}$) implies ($\Delta$).

 Assume ($\Delta$) holds. For a lattice $L$ there exist by \cref{lem:represent} a simplicial
 complex $\Delta$ and a generating set $\Gamma \subseteq \Delta$ such that
 $\ML(\Gamma)$ is isomorphic to $L$. Since the meet $\wedge$ in $\ML(\Gamma)$ is the set intersection 
 $\cap$ and since $\Gamma$ contains all facets of $\Delta$ it follows $\ML(\Delta)_\wedge
 = \ML(\Delta)_\cap \subseteq \ML(\Gamma)$ and $\ML(\Gamma)_\cap = \ML(\Delta)_\cap$. Moreover,
 $L_\wedge$ and $\ML(\Gamma)_\cap$ are isomorphic. 

 Using \cref{lem:intlattice} we conclude that $L, \ML(\Gamma)$ and $\ML(\Delta)$ have isomorphic homology.
 Thus by ($\Delta$) there are $\sigma,\tau \in \Delta$
 with  $\rhk_{a}\big( \,\lk_\Delta(\sigma);\field\,\big) \neq 0$
 and $\rhk_{b}\big(\,\lk_\Delta(\tau);\field\,\big) \neq 0$ such that 
 $\sigma \cap \tau = \emptyset$ and $\sigma \cup\tau \not\in \Delta$.
 By \cref{lem:intlattice} it
 follows that if $(\sigma,\hat{1})_{\ML(\Gamma)}$ has non-trivial homology then 
 $\sigma \in \ML(\Delta)_\cap = \ML(\Gamma)_\cap$ and $(\sigma,\hat{1})_{\ML(\Gamma)}$ and
 $\lk_\Delta(\sigma)$ have isomorphic homology. The same holds for $\tau$. 
 Since by \cref{lem:represent} the meet in $\ML(\Gamma)_\cap$ is set intersection we have
 $\sigma \wedge \tau = \hat{0}$ in $\ML(\Gamma)_\cap$. The fact that $\sigma \vee \tau= \hat{1}$ in
 $\ML(\Gamma)_\cap$ follows immediately from $\sigma \cup \tau \not \in \Delta$ and $\Gamma\subseteq \Delta$.
 This proves, ($\Delta$) implies ($L_{\wedge}$).

Assume ($L_\wedge$) holds. Let $L$ be an arbitrary lattice. Then by \cref{lem:intlattice} it follows that $L$ and $L_\wedge$ have
 isomorphic homology groups and that if $(x,\hat{1})_L$ or $(y,\hat{1})_L$ have non-trivial homology
 then $x$, $y \in L_\wedge$. For $x$, $y \in L_\wedge$ again  \cref{lem:intlattice} shows
 that $(x,\hat{1})_L$ and $(x,\hat{1})_{L_\wedge}$ have the isomorphic homology and
 $(y,\hat{1})_L$ and $(y,\hat{1})_{L_\wedge}$ have isomorphic homology.
 Since by construction $x \wedge y = \hat{0}$ and $x \vee y = \hat{1}$ in $L_\wedge$ imply the same in $L$
 the assertion of ($L_\geq$) follows. So $L_{\wedge}$ implies $L_{\geq}$.

  The equivalence of ($\Delta$) and ($I$) is a consequence of the fact that polarization preserves the lcm lattice formula~(\cite{GPW99}), and Eagon and Reiner's~\cite{ER98} formula for multigraded Betti numbers in terms of homology of links of simplicial complexes (see~\cite[Section~2.3.1]{FM22breakinghomology} for a detailed argument). 
\end{proof}

The equivalent homology statements in \cref{p:equivalences} inspire the definition of \textit{ complementary homologies} in each setting. In a nutshell: does having $k$-homology in an object imply $a$ and $b$ homologies in (complementary) sub-objects, where $a+b=k-2$? The rest of this paper is motivated by considering this question in the various settings offered by \cref{p:equivalences}.

%%%%%%%%%%%%%%%%%%%%%%%%
\section{Complementary homologies of lattices} \label{sec:3}
%%%%%%%%%%%%%%%%%%%%%%%%

In this section we provide the proof of \cref{thm:complement}. Along the way
we will derive in \cref{lem:homotopic} an unexpected homotopy equivalence related to complementation
in lattices. The next lemma is a first step towards \cref{lem:homotopic}.

\begin{lemma}\label{lem:join}
Let $x\in \overline{L}$ and $\rhk_{k}(\,\overline{L}; \field\,) \neq 0$ for some $k$. Then, there exists  
$y\in \MC(x)$ such that
\[
\rhk_{k-1}\big(\,(\hat{0},y)_L* \big((y,\hat{1})_L\setminus \MC(x)\big);\field\,\big) \neq 0
\]
\end{lemma}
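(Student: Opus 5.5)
The plan is to combine the Bj\"orner--Walker theorem~\cite[Theorem 3.2]{BW83complements} with a filtration that adds the complements of $x$ one at a time. Set $Q = \overline{L}\setminus \MC(x)$. By Bj\"orner--Walker, the order complex of $Q$ is contractible. The long exact sequence of the pair $(\overline{L},Q)$ then gives
$$
\rhk_k(\overline{L};\field)\cong H_k(\overline{L},Q;\field),
$$
so the relative homology in degree $k$ is nonzero. Note that $x\in \overline{L}$ and $\hat{0},\hat{1}$ are never complements of $x$ here, so $\MC(x)\subseteq \overline{L}$; moreover $\MC(x)\neq\emptyset$, since otherwise $\overline{L}=Q$ would be contractible.

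Next, order $\MC(x)=\{y_1,\dots,y_m\}$ by a linear extension of $L$, so that $y_i<y_j$ implies $i<j$. Put $K_0=Q$ and $K_i=Q\cup\{y_1,\dots,y_i\}$, viewed as order complexes of induced subposets, so $K_m=\overline{L}$. The step from $K_{i-1}$ to $K_i$ adds the single vertex $y_i$. Since $K_i$ is the order complex of an induced subposet, the new simplices are exactly the cone over the link $\lk_{K_i}(y_i)$. Excision then gives
$$
H_k(K_i,K_{i-1};\field)\cong \rhk_{k-1}\big(\lk_{K_i}(y_i);\field\big).
$$

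Now identify this link. It is the join of the part of $K_i$ strictly below $y_i$ with the part strictly above $y_i$.
\begin{itemize}
\item Below $y_i$: every element of $(\hat{0},y_i)_L$ lies in $K_i$. Indeed, it either lies in $Q$, or it is a complement $y_j<y_i$, and then $j<i$ by the choice of linear extension. So the lower part is all of $(\hat{0},y_i)_L$.
\item Above $y_i$: no complement $y_j>y_i$ has been added yet, since such $j$ satisfy $j>i$. So the upper part is exactly $(y_i,\hat{1})_L\setminus \MC(x)$.
\end{itemize}
Hence
$$
\lk_{K_i}(y_i)=(\hat{0},y_i)_L*\big((y_i,\hat{1})_L\setminus\MC(x)\big).
$$

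Finally, argue by contradiction. Suppose every relative group $H_k(K_i,K_{i-1};\field)$ vanishes. By induction on $i$, using the long exact sequence of the triple $(K_i,K_{i-1},K_0)$, each $H_k(K_i,K_0;\field)$ vanishes. Indeed, the segment
$$
H_k(K_{i-1},K_0)\to H_k(K_i,K_0)\to H_k(K_i,K_{i-1})
$$
has both outer terms zero, so the middle term is zero. Taking $i=m$ gives $H_k(\overline{L},Q;\field)=0$, contradicting the first step. So some $i$ has $H_k(K_i,K_{i-1};\field)\neq0$, and $y=y_i$ is the required complement.

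The main point needing care is the link identification. It depends on processing the complements in increasing order, and on the fact that the order complex of an induced subposet has vertex links equal to the join of the strict lower and upper parts. The edge cases where $(\hat{0},y_i)_L$ or the upper part is empty are handled by the usual conventions that the join with the empty complex is the other factor and $\rhk_{-1}(\emptyset;\field)=\field$; these are routine.
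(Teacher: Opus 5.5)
Your proof is correct and follows the paper's argument: the same Bj\"orner--Walker contractibility of $\overline{L}\setminus\MC(x)$, the same filtration adding the complements of $x$ along a linear extension, and the same identification of the link of $y_i$ as $(\hat{0},y_i)_L*\big((y_i,\hat{1})_L\setminus\MC(x)\big)$. The only difference is cosmetic. You detect the nonvanishing through the relative groups $H_k(K_i,K_{i-1};\field)\cong\rhk_{k-1}(\lk_{K_i}(y_i);\field)$ and the long exact sequence of triples. The paper instead takes the first index $j$ with $\rhk_k(L_j;\field)\neq 0$ and applies the link--deletion long exact sequence.
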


\begin{proof}
    By \cite{BW83complements}, the poset $L_0 = \overline{L}\setminus \MC(x)$ is contractible. Write $\MC(x) = \{y_1,y_2,\ldots,y_r\}$ such that if $i<j$, then either $y_i<y_j$ or $y_i$ is incomparable to $y_j$. For $1\leq j \leq r,$ define $L_j = \big(\overline{L}\setminus \MC(x)\big) \cup \{y_1,\ldots,y_j\}.$ Note that $L_r = \overline{L}$. Since $\rhk_{k}(\overline{L}; \field) \neq 0$, there exists a $j$ with $1\leq j \leq r$ such that 
    \[
    \rhk_{k}(L_i; \field) = 0 \qforall 0\leq i < j, \qand  \rhk_{k}(L_j; \field) \neq  0.
    \]
    
    Consider the vertex $y_j$ in the poset $L_j$ and note that $\del_{L_j}(y_j) = L_{j-1}$ and $\lk_{L_j}(y_j) = (\hat{0},y_j)_{L_j}*(y_j,\hat{1})_{L_j}$. By the ordering of $\MC(x),$ we have $(\hat{0},y_j)_{L_j} = (\hat{0},y_j)_{L}$ and $(y_j,\hat{1})_{L_j} = (y_j,\hat{1})_{L}\setminus \MC(x)$. The long exact sequence of link and deletion gives:
    \[
    \cdots \rightarrow \  \rhk_{k}(\lk_{L_j}(y_j); \field) \ \rightarrow \ \rhk_{k}(\del_{L_j}(y_j); \field) \ \rightarrow \ \rhk_{k}(L_j; \field)
   \ \rightarrow \ \rhk_{k-1}(\lk_{L_j}(y_j); \field) \ \rightarrow \cdots.
    \]
    By the choice of $j$ we have $$\rhk_{k}(\del_{L_j}(y_j); \field) =0, \, \rhk_{k}(L_j; \field) \neq  0 \text{ and } \lk_{L_j}(y_j) = (\hat{0},y_j)_{L} * (y_j,\hat{1})_{L}\setminus \MC(x).$$
    The result now follows from the long exact sequence.
\end{proof}

Next we want to understand the topology of the poset $(y,\hat{1})_L\setminus \MC(x)$. 
We apply Quillen's Fiber Lemma (\cref{lem:quillen}) to compare two naturally arising subposets associated to complementary elements.

\begin{proposition}\label{lem:homotopic}
    Let $x\in \overline{L}$ and $y\in \MC(x)$. Then, 
    \[
    (\hat{0},x)_L\setminus \MC(y) \simeq (y,\hat{1})_L\setminus \MC(x).
    \]
\end{proposition}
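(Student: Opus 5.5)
The plan is to write down an explicit pair of order preserving maps between the two posets that form a Galois connection, and then apply the Quillen Fiber Theorem (\cref{lem:quillen}). Write $P = (\hat{0},x)_L\setminus \MC(y)$ and $Q = (y,\hat{1})_L\setminus \MC(x)$.

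\textbf{Step 1: simplify the two posets.} For $z \in (\hat{0},x)_L$ we have $z \wedge y \leq x \wedge y = \hat{0}$. Hence such a $z$ fails to be a complement of $y$ exactly when $z \vee y \neq \hat{1}$. Symmetrically, for $w \in (y,\hat{1})_L$ we have $w \vee x \geq y \vee x = \hat{1}$. Hence $w \notin \MC(x)$ exactly when $w \wedge x \neq \hat{0}$. So
$$P = \{\hat{0}<z<x \st z\vee y\neq \hat{1}\} \qand Q=\{y<w<\hat{1} \st w\wedge x\neq\hat{0}\}.$$

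\textbf{Step 2: define the maps and check they land in the right poset.} Define $f:P\to Q$ by $f(z)=z\vee y$ and $g:Q\to P$ by $g(w)=w\wedge x$. Both are clearly order preserving.

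For $f(z)$:
\begin{itemize}
\item $f(z)\neq\hat{1}$ by the definition of $P$.
\item $f(z)\neq y$: otherwise $z\leq y$, so $z=z\wedge y=\hat{0}$, a contradiction.
\item $f(z)\wedge x\geq z\neq\hat{0}$, so $f(z)\notin\MC(x)$.
\end{itemize}

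For $g(w)$:
\begin{itemize}
\item $g(w)\neq \hat{0}$ by the definition of $Q$.
\item $g(w)\neq x$: otherwise $x\leq w$, so $w\geq x\vee y=\hat{1}$, a contradiction.
\item $g(w)\vee y\leq w\neq\hat{1}$, so $g(w)\notin\MC(y)$.
\end{itemize}

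\textbf{Step 3: the Galois connection property.} For $z\in P$ and $w\in Q$ we claim $f(z)\leq w$ if and only if $z\leq g(w)$.
\begin{itemize}
\item If $z\vee y\leq w$, then $z\leq w$; since also $z\leq x$, we get $z\leq w\wedge x$.
\item If $z\leq w\wedge x$, then $z\leq w$; since also $y\leq w$, we get $z\vee y\leq w$.
\end{itemize}

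\textbf{Step 4: apply the Quillen Fiber Theorem.} By Step 3, for every $w\in Q$ the fiber is
$$f^{-1}(Q_{\leq w})=\{z\in P \st z\leq g(w)\}=P_{\leq g(w)}.$$
Since $g(w)\in P$, this fiber has the unique maximal element $g(w)$. Its order complex is therefore a cone, hence contractible. \cref{lem:quillen} then gives that $f$ induces a homotopy equivalence $P\simeq Q$.

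The only delicate point is Step 2, namely verifying that $f$ and $g$ really avoid the boundary elements and the excluded complements. Each of these checks is a one-line lattice computation using $x\wedge y=\hat{0}$ and $x\vee y=\hat{1}$, so I do not expect any serious obstacle.
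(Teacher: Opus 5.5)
Your proof is correct and follows the paper's route: the same map $z\mapsto z\vee y$ from $(\hat{0},x)_L\setminus\MC(y)$ to $(y,\hat{1})_L\setminus\MC(x)$, with each Quillen fiber shown to be a cone because it has a unique maximal element. Your Galois-connection step is a mild streamlining. It identifies the fiber directly as $P_{\leq w\wedge x}$, whereas the paper shows the fiber is nonempty and closed under joins. It also makes the paper's separate treatment of $P=\emptyset$ unnecessary, since $g$ maps $Q$ into $P$.
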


\begin{proof}
Let 
    $$P=(\hat{0},x )_L\setminus \MC(y) 
    \qand 
    Q= (y,\hat{1})_L\setminus \MC(x).$$

First, consider the case $P = \emptyset$. We show that $Q = \emptyset$.
    On the contrary, suppose that $Q \neq \emptyset$, and let $z \in Q$. 
    Then,
    $ \hat{1} = y \vee x \leq z \vee x \leq  \hat{1}$. 
    Since $z\notin \MC(x)$, it follows that $z \wedge x \neq \hat{0}$. Hence, from $z \vee x = \hat{1}$, we obtain
    $\hat{0} < 
    z \wedge x < x$. 
    Moreover, from $P = \emptyset$ it follows that $z \wedge x \in \MC(y)$, and therefore, $y \vee (z \wedge x) = \hat{1}$. 
    On the other hand, since $y< z< \hat{1}$ and $z \wedge x< z$, we obtain $y \vee (z \wedge x) \leq z < \hat{1}$ which is a contradiction. Hence, $Q = \emptyset$.
    
    It remains to consider the case $P \neq \emptyset$.
    If $x'\in P$ then $x' < x$. Thus $x \wedge y = \hat{0}$ implies
    $x' \wedge y =\hat{0}$. Since $x' \not\in \MC(y)$ we must have
    $x' \vee y < \hat{1}$.
    Moreover, for all $x'\in P$, we have $x'\vee y \notin \MC(x)$ because $(x'\vee y)\wedge x  \geq x'$.
    Thus, $x' \vee y \in Q$ for all $x'\in P$.
    Hence we can define a map 
    $$\phi:P \to Q \qwhere \phi(x') = x'\vee y$$ and it follows from the definition of join that $\phi$ is order preserving. 
   
  Note that $P$ may not be a lattice because the join of two elements in $P$ does not necessarily lie in $P$. We claim that, for $z \in Q$,  
   \begin{equation}\label{eq:vee}
   x_1,x_2\in \phi^{-1}(Q_{\leq z}) \Longrightarrow 
   x_1\vee x_2\in \phi^{-1}(Q_{\leq z}).
   \end{equation}
   Since $x_1, x_2\leq x$ and $L$ is a lattice, $x_1 \vee x_2 \leq x$.
   It remains to show that $x_1 \vee x_2 \notin \MC(y)$, which would imply in particular that $x_1 \vee x_2 < x$.
  
    As  $\phi(x_1)\leq z$ and $\phi(x_2)\leq z$,  we have 
    $$
    x_1\vee x_2 \vee y = 
    (x_1\vee y) \vee (x_2 \vee y) \leq z < \hat{1}
    $$  
    and so $x_1\vee x_2 \vee y \neq \hat{1}$, proving the claim in~\eqref{eq:vee}.
    Since $z\notin \MC(x)$ and $z > y \in \MC(x)$ it follows that $z \wedge x > \hat{0}$.
    By $z \geq z \wedge x,y$ we infer that $z \wedge x \not \in \MC(y)$.
    This implies both
    $z \wedge x \in P$ and  $\phi(z \wedge x) = (z \wedge x) \vee y \leq z$. 
    From that we get that $\phi^{-1}(Q_{\leq z})$ is non-empty. 
    
     We now show that $\phi^{-1}(Q_{\leq z})$ has a unique maximal element. On the contrary, if $x_1$ and $x_2$ are two maximal elements in $\phi^{-1}(Q_{\leq z})$, then by \eqref{eq:vee}  $x_1\vee x_2 \in \phi^{-1}(Q_{\leq z})$ contradicting
the maximality of $x_1$ and $x_2$. Hence, $\phi^{-1}(Q_{\le z})$ is a cone with apex the unique maximal element and therefore contractible.
By \cref{lem:quillen}, $\phi$ induces a homotopy equivalence, proving the claim.
\end{proof}

When the set of complements of an $x\in \overline{L}$ forms an antichain, Bj\"orner and Walker~\cite{BW83complements} proved a homotopy formula for $\overline{L}$ in terms of intervals 
determined by elements of $\MC(x)$. Using \Cref{lem:homotopic}, we obtain the following reformulation of their result.

\begin{corollary}\label{cor:antichain}
  Let $L$ be a lattice and $x\in \overline{L}$ be such that $\MC(x)$ is an antichain. Then,
  \[
\overline{L} \quad \simeq \bigvee_{y \in \MC(x)} \susp\big(\, (\hat{0},y)* (\hat{0}, x) \setminus \MC(y)\,\big).
\]
\end{corollary}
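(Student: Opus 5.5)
The plan is to start from the Bj\"orner--Walker result \cite{BW83complements} that $L_0=\overline{L}\setminus \MC(x)$ is contractible, and add the complements back one at a time. Since $\MC(x)$ is an antichain, no element of $\MC(x)$ lies strictly below or above another. So in $\overline{L}$ the link of each $y\in\MC(x)$ is
\[
\lk(y)=(\hat{0},y)_L * \big((y,\hat{1})_L\setminus \MC(x)\big),
\]
and this link is unaffected by which other complements have already been added.

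The key steps, in order, are as follows.

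First, the links of distinct elements of the antichain $\MC(x)$ lie entirely in $L_0$. Hence $\overline{L}$ is obtained from $L_0$ by attaching, for each $y\in\MC(x)$, the cone over $\lk(y)$ with apex $y$. Since $L_0$ is contractible, gluing cones onto a contractible space along subcomplexes gives
\[
\overline{L}\simeq \bigvee_{y\in \MC(x)} \susp\big(\lk(y)\big).
\]
Concretely, each cone $y*\lk(y)$ attached to the contractible $L_0$ contributes a suspension of $\lk(y)$; this is the standard consequence of the gluing lemma, or of homotopy pushouts.

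Second, I apply \cref{lem:homotopic} to replace $(y,\hat{1})_L\setminus \MC(x)$ by $(\hat{0},x)_L\setminus \MC(y)$. The homotopy type of a join depends only on the homotopy types of its factors, so
\[
\lk(y)\simeq (\hat{0},y)_L*\big((\hat{0},x)_L\setminus\MC(y)\big),
\]
and substituting this into each wedge summand gives the stated formula.

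The main obstacle is the wedge decomposition in the first step, namely justifying that the attachments are independent. This requires checking that $\lk_{\overline{L}}(y)\subseteq L_0$ for every $y\in\MC(x)$, which is exactly where the antichain hypothesis is used. Given that, $\overline{L}=L_0\cup\bigcup_y \big(y*\lk(y)\big)$, where the cones meet $L_0$ in their bases and meet each other only inside $L_0$. Collapsing the contractible subcomplex $L_0$ (a CW pair, so the quotient map is a homotopy equivalence) turns each cone into $\susp(\lk(y))$ and yields the wedge.

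An alternative is to quote Bj\"orner--Walker's homotopy formula directly, which already gives $\overline{L}\simeq\bigvee_y\susp\big((\hat0,y)*(y,\hat1)\setminus\MC(x)\big)$ in the antichain case, and then only the second step is needed.
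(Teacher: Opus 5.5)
Your proposal is correct and is essentially the paper's argument. The paper quotes the Bj\"orner--Walker homotopy complementation formula $\overline{L}\simeq\bigvee_{y\in\MC(x)}\susp\big((\hat0,y)*(y,\hat1)\big)$, uses the antichain hypothesis to get $(y,\hat1)_L=(y,\hat1)_L\setminus\MC(x)$, and applies \cref{lem:homotopic} to each join factor, which is exactly your ``alternative''; your first step simply rederives that cited formula by attaching the cones $y*\lk(y)$ to the contractible complex $\overline{L}\setminus\MC(x)$ and collapsing it, and that derivation is valid.
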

 
\begin{proof}
By \cite{BW83complements}, 
\[
\overline{L} \, \simeq \bigvee_{y \in \MC(x)} \susp\big(\, (\hat{0},y)* (y, \hat{1})\,\big).
\]
Since $\MC(x)$ is an antichain, we get $(y, \hat{1})= (y, \hat{1}) \setminus \MC(x)$.
Therefore, by \Cref{lem:homotopic},
\[
\overline{L} \, \simeq \bigvee_{y \in \MC(x)} \susp\big(\, (\hat{0},y)* (\hat{0}, x) \setminus \MC(y)\,\big).
\]
\end{proof}

The following lemma shows how non-trivial homology in the subposet $(\hat{0},x)_L\setminus \MC(y)$ 
disseminates in the poset.

\begin{lemma}\label{lem:openhomology}
     Let $x\in \overline{L}$ and $y\in \MC(x)$. If $\rhk_{a}\big(\,(\hat{0},x )_L\setminus \MC(y);\field\,\big) \neq 0$ for some $a$, then there exists  $z \in (\hat{0},x]_L\cap \MC(y)$ such that $\rhk_{a}\big(\,(\hat{0},z )_L;\field\,\big) \neq 0$. 
\end{lemma}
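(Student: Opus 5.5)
The plan is to mimic the proof of \cref{lem:join}: add the missing elements back to $P=(\hat{0},x)_L\setminus \MC(y)$ one at a time, ending at a contractible poset. The homology class in degree $a$ must die at some step, and that step will produce the required $z$.

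First, $x\in \MC(y)$, since $y\in\MC(x)$. Hence $(\hat{0},x]_L = P \cup \big((\hat{0},x]_L\cap \MC(y)\big)$. This poset has the unique maximal element $x$, so it is a cone and therefore contractible; in particular its reduced homology vanishes in every degree $a\geq -1$.

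The key observation is that $(\hat{0},x]_L\cap \MC(y)$ is an up-set of $(\hat{0},x]_L$. Let $z\in \MC(y)$ and $z\leq w\leq x$. Then $w\vee y\geq z\vee y=\hat{1}$, and $w\wedge y\leq x\wedge y=\hat{0}$, so $w\in\MC(y)$.

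Now list $(\hat{0},x]_L\cap\MC(y)=\{z_1,\dots,z_r\}$ by a linear extension, so that $z_i<z_j$ implies $i<j$. Set $P_0=P$ and $P_j=P\cup\{z_1,\dots,z_j\}$, so that $P_r=(\hat{0},x]_L$. We compute the link of $z_j$ in the order complex of $P_j$:

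\begin{itemize}
\item The lower part $(\hat{0},z_j)_{P_j}$ equals $(\hat{0},z_j)_L$. Indeed, every element below $z_j$ lies in $(\hat{0},x)_L$; it is either in $P$ or is some $z_i$ with $i<j$.
\item The upper part $(z_j,x]_{P_j}$ is empty. By the up-set property, every element above $z_j$ in $(\hat{0},x]_L$ lies in $\MC(y)$, so it is not in $P$ and is some $z_i$ with $i>j$.
\end{itemize}

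Hence $\lk_{P_j}(z_j)=(\hat{0},z_j)_L$ and $\del_{P_j}(z_j)=P_{j-1}$.

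Since $\rhk_a(P_0;\field)\neq 0$ and $\rhk_a(P_r;\field)=0$, there is a $j$ such that $\rhk_a(P_{j-1};\field)\to\rhk_a(P_j;\field)$ is not injective. The long exact sequence of link and deletion contains the segment
\[
\rhk_a(\lk_{P_j}(z_j);\field)\to \rhk_a(P_{j-1};\field)\to \rhk_a(P_j;\field).
\]
By exactness, non-injectivity of the second map forces $\rhk_a\big((\hat{0},z_j)_L;\field\big)\neq 0$. Then $z=z_j\in(\hat{0},x]_L\cap\MC(y)$ is the desired element. Equivalently, $P_j$ is obtained from $P_{j-1}$ by gluing a cone over $(\hat{0},z_j)_L$, and Mayer--Vietoris gives the same conclusion.

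The main obstacle is the degenerate degree $a=-1$, where $P=\emptyset$. The same argument still works with the usual convention that $\rhk_{-1}(\emptyset)=\field$ and the link of a minimal element is empty. In that case $z_j$ is the first element added, so $P_{j-1}=\emptyset$, and $(\hat{0},z_j)_L=\emptyset$ has $\rhk_{-1}\neq 0$.
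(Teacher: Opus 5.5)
Your proof is correct and is essentially the paper's argument: add the elements of $\MC(y)$ below $x$ back one at a time along a linear extension, and apply the link--deletion long exact sequence, where the link of the newly added element is just $(\hat{0},z_j)_L$. The differences are cosmetic. By also adding $x$ itself you end at the cone $(\hat{0},x]_L$, which absorbs the paper's separate case $\rhk_a\big((\hat{0},x)_L;\field\big)\neq 0$ (there it takes $z=x$). You also verify explicitly that $(\hat{0},x]_L\cap\MC(y)$ is an up-set, which the paper uses tacitly when it asserts that the link has no part above $y_j$.
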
  

\begin{proof}
    If $\rhk_{a}\big(\,(\hat{0},x )_L;\field\,\big) \neq 0$, then we can choose $z=x$. It remains to consider the case $\rhk_{a}\big(\,(\hat{0},x )_L;\field\,\big) = 0$.
    Let $L_0= (\hat{0},x )_L\setminus \MC(y)$.
    Write $(\hat{0},x )_L\cap \MC(y) = \{y_1,y_2,\ldots,y_r\}$ such that if $i<j$, then either $y_i<y_j$ or $y_i$ is incomparable to $y_j$. For $1 \leq j \leq r$, define 
    $$L_j = \big(\,(\hat{0},x )_L\setminus \MC(y)\,\big) \cup \{y_1,\ldots,y_j\}.$$ Since $\rhk_{a}\big(\,(\hat{0},x )_L\setminus \MC(y);\field\,\big) \neq 0$ and $\rhk_{a}\big(\,(\hat{0},x )_L;\field\,\big) = 0$, there exists a $j$ with $1\leq j \leq r$ such that 
    \[
    \rhk_{a}(L_{j-1}; \field) \neq 0, \qand  \rhk_{a}(L_{j}; \field) =  0.
    \]

    Now consider the vertex $y_j$ in the poset $L_j$. Note that $\del_{L_j}(y_j) = L_{j-1}$ and $\lk_{L_j}(y_j) = (\hat{0},y_j)_{L_j}$. By the chosen ordering of $(\hat{0},x )_L\cap \MC(y),$ we have $(\hat{0},y_j)_{L_j} = (\hat{0},y_j)_{L}$. The long exact sequence of link and deletion gives:
    \[
    \cdots  \ \rightarrow \ \rhk_{a+1}(L_j; \field) \rightarrow \  \rhk_{a}(\lk_{L_j}(y_j); \field) \ \rightarrow \ \rhk_{a}(\del_{L_j}(y_j); \field) \ \rightarrow \ \rhk_{a}(L_j; \field) \ \rightarrow \cdots.
    \]
    Since  $\rhk_{a}(\del_{L_j}(y_j); \field) = \rhk_{a}(L_{j-1}; \field) \neq 0$ and $\rhk_{a}(L_{j}; \field) =  0$, it follows that  
    $$\rhk_{a}\big(\,(\hat{0},y_j)_L;\field\,\big) = 
    \rhk_{a}\big(\lk_{L_j}(y_j);\field\,\big) \neq 0\,.
    $$
    Thus, the desired element $z$ is given by $z = y_j$.
\end{proof}

We now are in position to prove our main result \cref{thm:complement}.

\begin{proof}[Proof of \cref{thm:complement}]
Let $x \in \overline{L}$. By {\Cref{lem:join}}, there exists $y\in \MC(x)$ such that 
\[
\rhk_{k-1}\Big(\,(\hat{0},y)_L* \big(\,(y,\hat{1})_L\setminus \MC(x)\,\big);\field\,\Big) \neq 0.
\]
By the formula for the homology of a join~(see e.g., \cite[page\ 1847]{Bjornertopologicalmethods}),
we have  
\[
\rhk_{k-1}\Big(\,(\hat{0},y)_L* \big(\,(y,\hat{1})_L\setminus \MC(x)\,\big);\field\,\Big) = \mathop{\bigoplus}_{i+j=k-2} \rhk_{i}\big(\,(\hat{0},y)_L;\field\,\big) \otimes \rhk_{j}\big(\,(y,\hat{1})_L\setminus \MC(x);\field\,\big).
\]

Therefore, there exist $a, b$ such that $a+ b = k-2$ and
\[
\rhk_{a}\big(\,(y,\hat{1})_L\setminus \MC(x); \field\,\big) \neq 0  \qand \rhk_{b}\big(\,(\hat{0},y )_L; \field\, \big) \neq 0.
\]
By \Cref{lem:homotopic}, $(y,\hat{1})_L\setminus \MC(x) \simeq (\hat{0},x)_L\setminus \MC(y)$; thus,
\[
\rhk_{i}\big(\,(y,\hat{1})_L\setminus \MC(x); \field \,\big) \iso \rhk_{i}\big(\,(\hat{0},x)_L\setminus \MC(y);\field\, \big) \qforall i.
\]
Therefore, by \Cref{lem:openhomology}, there exists $z \in (\hat{0},x]_L\cap \MC(y)$ such that $\rhk_{a}\big(\,(\hat{0},z )_L;\field\,\big) \neq 0$. This completes the proof of the main statement. 

Now suppose that $\rhk_{b'}\big(\,(\hat{0},y)_L;\field\,\big) \neq 0$ for some $b'$.  
By the homology formula for joins, we obtain
\[
\rhk_{b'+a+1}\Big(\, (\hat{0},y) * \big(\,(\hat{0}, x) \setminus \MC(y)\,\big),
\field\,\Big) \neq 0.
\]
Therefore, 
\[
\rhk_{b'+a+2}\Big(\,\susp\!\big(\, (\hat{0},y)_L * \big((\hat{0}, x)_L \setminus \MC(y)\big)\,\big),
\field\,\Big) \neq 0.
\]
Since $\MC(x)$ is an antichain, by \Cref{cor:antichain} we obtain
\[
\rhk_{b'+a+2}\big(\,\overline{L}; \field\,\big) \neq 0.
\]
\end{proof}

The  minimal elements of $\overline{L}$ do have complements with complementary homologies, as stated in \cref{que:main}.

\begin{corollary}\label{cor:1complemented}
    If $\rhk_{k}(\overline{L}; \field) \neq 0$ for some $k\geq 0$ and $x$ is a minimal element of $\overline{L}$, then there is $y \in \MC(x)$ such that $\rhk_{k-1}\big(\,(\hat{0},y )_L;\field \big) \neq 0$. 
\end{corollary}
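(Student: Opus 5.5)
Since $x$ is a minimal element of $\overline{L}$, the open interval $(\hat{0},x)_L$ is empty, and therefore so is $P=(\hat{0},x)_L\setminus \MC(y)$ for every $y\in\MC(x)$. The plan is to run the first part of the proof of \cref{thm:complement} and observe that this emptiness forces the degrees: $a=-1$ and $b=k-1$.

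By \cref{lem:join} there is $y\in\MC(x)$ with
\[
\rhk_{k-1}\Big(\,(\hat{0},y)_L * \big((y,\hat{1})_L\setminus \MC(x)\big);\field\,\Big)\neq 0 .
\]
By \cref{lem:homotopic}, $(y,\hat{1})_L\setminus\MC(x)\simeq P$. The first part of the proof of that proposition shows directly that $P=\emptyset$ implies $Q=(y,\hat{1})_L\setminus\MC(x)=\emptyset$, so $Q$ is literally empty, not merely homotopy equivalent to the empty set. The join with the empty complex is the complex itself: $(\hat{0},y)_L*\emptyset=(\hat{0},y)_L$. Hence $\rhk_{k-1}\big((\hat{0},y)_L;\field\big)\neq 0$, which is the claim.

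Equivalently, in the join formula used in the proof of \cref{thm:complement}, the only nonvanishing reduced homology of the empty complex is $\rhk_{-1}(\emptyset;\field)=\field$. This forces $a=-1$ and $b=k-1$, giving the same conclusion.

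The only point needing care is the convention for the empty complex. We need that the join with $\emptyset$ is the identity, and that $\rhk_{-1}(\emptyset)=\field$, so the Künneth formula for joins remains valid with the degree shift. Both conventions are already implicit in how the paper allows $a,b\ge -1$. As a sanity check, the degenerate case $k=0$ needs no separate argument: then $\rhk_{-1}\big((\hat{0},y)_L\big)\neq0$, meaning $y$ is an atom. This is consistent with $L_0\cup\{y_j\}$ producing new $0$-homology.
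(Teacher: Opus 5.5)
Your proof is correct and is essentially the paper's argument. The paper invokes \cref{thm:complement} and notes that $(\hat{0},x]_L=\{x\}$ forces $z=x$, hence $a=-1$ and $b=k-1$ (its text misprints this as $b=k+1$). You instead stop the same reasoning at \cref{lem:join} by observing that $(y,\hat{1})_L\setminus\MC(x)$ is literally empty, so you need only the first case of the proof of \cref{lem:homotopic} and can skip \cref{lem:openhomology}; both versions rest on the convention $\rhk_{-1}(\emptyset;\field)=\field$ that you flag.
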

\begin{proof}
    If $x$ is a minimal element of $\overline{L}$, then $(\hat{0}, x)_L = \emptyset$ which implies that $(\hat{0}, x]_L \cap \MC(y) = \{ x\}$. Therefore, in
    \Cref{thm:complement}
    we have $z = x$. Since $\rhk_{i}\big(\,(\hat{0},x )_L;\field\, \big) \neq 0$ if and only if $i=-1$ it follows that $a = -1$, which then implies $b = k+1$.
    Now the corollary follows from \Cref{thm:complement}.
\end{proof}

In the setting of \Cref{lem:openhomology}, it is natural to ask whether one can obtain the additional conclusion $\rhk_{a}\big((\hat{0},z)_L\setminus \MC(y);\field\big)\neq 0$.
The following example shows that this is not always the case.

\begin{example}\label{ex:complement}
Consider the lattice $L$ whose elements are all unions of the sets
\begin{align*}
&S_1 = \{1,5,9\}, &&S_2 = \{2,5,8\}, &&S_3 = \{2,7,8\}, &&S_4 = \{2,4,9\},\\
&S_5 = \{4,5,6\}, &&S_6 = \{3,7,8\}, &&S_7 = \{6,7,8\}, &&S_8 = \{4,7,8\}
\end{align*}

with the empty set a the unique minimal element $\hat{0}$, as shown in \cref{fig:complement}
\begin{figure}
\begin{tikzpicture}[
  x=0.7cm,
  y=0.7cm,
  every node/.style={
    draw,
    rounded corners=1.5pt,
    fill=white,
    inner xsep=3pt,
    inner ysep=2pt
  }
]

  \node (v0) at (0,0.0) {$\scriptstyle \emptyset$};

  \node (v1) at (-5,2.5) {$\scriptstyle 1,5,9$};
  \node (v2) at (-2.5,2.5) {$\scriptstyle 2,4,9$};
  \node (v3) at (2.5,2.5) {$\scriptstyle 2,5,8$};
  \node (v4) at (5,2.5) {$\scriptstyle 4,5,6$};

  \node[text=blue,draw=blue] (v5) at (-8,6) {$\scriptstyle 1,2,4,5,9$};
  \node (v6) at (-5,6) {$\scriptstyle 1,2,5,8,9$};
  \node (v7) at (-2.5,6) {$\scriptstyle 1,4,5,6,9$};
  \node (v10) at (1,6) {$\scriptstyle 2,4,5,8,9$};
  \node (v9) at (4,6) {$\scriptstyle 2,4,5,6,9$};
  \node (v8) at (7,6) {$\scriptstyle 2,4,5,6,8$};

  \node[text=blue,draw=blue] (v11) at (-0.5,9) {$\scriptstyle 1,2,4,5,6,9$};
  \node[text=blue,draw=blue] (v12) at (-7,9) {$\scriptstyle 1,2,4,5,8,9$};
  \node (v13) at (6,9) {$\scriptstyle 2,4,5,6,8,9$};

  \node[text=blue,draw=blue] (v14) at (0,11.8) {$\scriptstyle 1,2,4,5,6,8,9$};

    \draw (v0) -- (v1);
    \draw (v0) -- (v2);
    \draw (v0) -- (v3);
    \draw (v0) -- (v4);
    \draw (v1) -- (v5);
    \draw (v1) -- (v6);
    \draw (v1) -- (v7);
    \draw (v2) -- (v5);
    \draw (v2) -- (v9);
    \draw (v2) -- (v10);
    \draw (v3) -- (v6);
    \draw (v3) -- (v8);
    \draw (v3) -- (v10);
    \draw (v4) -- (v7);
    \draw (v4) -- (v8);
    \draw (v4) -- (v9);
    \draw[draw=blue,thick] (v5) -- (v11);
    \draw[draw=blue,thick] (v5) -- (v12);
    \draw (v6) -- (v12);
    \draw (v7) -- (v11);
    \draw (v8) -- (v13);
    \draw (v9) -- (v11);
    \draw (v9) -- (v13);
    \draw (v10) -- (v12);
    \draw (v10) -- (v13);
    \draw[draw=blue,thick] (v11) -- (v14);
    \draw[draw=blue,thick] (v12) -- (v14);
    \draw (v13) -- (v14);
\end{tikzpicture}
\caption{Interval $[\hat{0},X]_L$ in \cref{ex:complement}} \label{fig:complement}
\end{figure}

Let 
$$X = \{1,2,4,5,6,8,9\} = S_1 \cup S_2 \cup S_4 \cup S_5 
\qand 
Y = \{3,6,7,8\}\,= S_6 \cup S_7.$$ 
Since $|X \cap Y | = 2$ none of the $S_i$ is contained in both $X$ and $Y$. It follows
that $X \wedge Y = \hat{0}$, and since $X \cup Y$ is the union of all $S_i$
it follows that $X \vee Y = \hat{1}$ and hence $Y \in \MC(X)$. 

Since $1$ is only contained in $S_1$, any complement $Z$ of $Y$ must contain 
$S_1$. The set $S_1$ itself is not a complement of $Y$ since $2 \not\in S_1 \cup Y$. 
So any complement of $Y$ in the interval $(\hat{0},X]_L$  must be in the
interval $(S_1,X]_L$. 
From \cref{fig:complement} we get that the interval $(S_1,X]_L$ consists of
$\{1,2,4,5,9\}, \{1,2,5,8,9\}$, $\{1,4,5,6,9\}$, $\{1,2,4,5,8,9\}$,
$\{1,2,4,5,6,9\}$ and $X$. It can be  easily checked that 
\[
\MC(Y)\cap (\hat{0},X ]_L=
\big \{ 
Z_1=\{1,2,4,5,8,9\}, 
\  
Z_2=\{1,2,4,5,6,9\}, 
\ 
Z_3=\{ 1,2,4,5,9\}, 
\ X
\big \}\,;
\]
these elements are labeled blue in \cref{fig:complement}.

One can see from \cref{fig:complement} that the  $1$-cycle
\begin{align*}
     &\{S_4, \{2,4,5,8,9\}\} - \{S_2, \{2,4,5,8,9\}\} + \{S_2, \{1, 2,5,8,9\}\} - \{S_1,\{1, 2,5,8,9\} \} +\\
    & 
    \{S_1,\{1, 4,5,6,9\}\} - \{S_5,\{1, 4,5,6,9\}\}  + \{S_5,\{2, 4,5,6,9\}\} -  \{S_4, \{2,4,5,6,9\}\} 
\end{align*}
is a non-trivial in $(\hat{0}, X)_L\setminus \MC(Y)$. Therefore,
\[
\rhk_{1}\big(\,(\hat{0}, X)_L\setminus \MC(Y);\field\,\big) \neq 0\,.
\]
We will now show that for every $Z\in \MC(Y)\cap (\hat{0},X ]_L$, 
$$
    \rhk_{1}\big(\,(\hat{0}, Z)_L;\field\,\big) = 0 
    \qor 
    \rhk_{1}\big(\,(\hat{0},Z )_L\setminus \MC(Y);\field\,\big) = 0\,.
    $$

 We first consider the case $Z=X$: since the element $S_4=\{2,4,9\}$ in \cref{fig:complement} does not have a complement in the lattice $[\hat{0},X]_L$, the interval $(\hat{0},X)_L$ is acyclic by~\cite[Corollary~6.3]{Bac77}. 
For $Z\in \{Z_1,Z_2\}$ one can see from \cref{fig:complement} that the interval $[\hat{0}, Z]$ is a  Boolean lattices of $3$ elements; thus the  open interval $(\hat{0}, Z)$ is homeomorphic to $S^1$. 
However one can see from \cref{fig:complement} that $(\hat{0}, Z)\setminus \MC(Y)$ is acyclic. Finally, if $Z=Z_3$, then $(\hat{0}, Z)$ is two points, and can only have $0$-homology. 

\end{example}

We close this section with a counterexample to the following strengthening of
\cref{que:main}.

\begin{question} \label{que:strongmain}
Let $L$ be a lattice and suppose that
\begin{itemize}
\item $\rhk_k\big(\,\overline{L};\KK\,\big) \neq 0$ for some $k \geq 0$,
\item $a,b \geq 0$ and $a+b = k-2$.
\end{itemize}
Then for any $x \in L$ with
$$
\rhk_{a}\big(\,(\hat{0},x)_L;\KK\,\big) \neq 0,$$ 
is there a $y \in \MC(x)$ such that 
$\rhk_{b}\big(\,(\hat{0},y)_L;\KK\,\big) \neq 0$?
\end{question}

The following example shows that the answer is no.

\begin{example}\label{example:counter-example}
Consider the lattice $L$ whose elements are all unions of the sets
     \begin{align*}
     S_1 = \{1,2,5\}, &&S_2 = \{2,5,7\},  &&S_3 = \{1,4,10\}, &&S_4 = \{1,5,9\},\\
     S_5 = \{2,4,10\}, &&S_6 = \{3,7,8\}, &&S_7 = \{5,6,10\} , &&     S_8 = \{4,5,8\}\,,
     \end{align*}
     with the empty set as the unique minimal element $\hat {0}$. 
     The set $X = \{1,2,5,7\} = S_1 \cup S_2$ only contains $S_i$ for $i =1,2$. It follows
     that $\rhk_i\big(\,(\hat{0},X)_L;\KK\,\big) = 0$ for $i \neq 0$ and 
     $= \KK$ for $i = 0$. 
     
\begin{figure}[htbp!]
  \centering
    \begin{tikzpicture}[
  x=0.5cm,y=0.9cm,
  vertex/.style={draw,rounded corners=1pt,fill=white,inner sep=1.6pt,
                 font=\scriptsize,align=center},
  edge/.style={line width=0.3pt}
]
\coordinate (p0) at (0,0);
\coordinate (p1) at (-8,3);
\coordinate (p2) at (-4,3);
\coordinate (p3) at (3.5,3);
\coordinate (p4) at (9,3);
\coordinate (p5) at (0,3);
\coordinate (p6) at (-6,6);
\coordinate (p7) at (-0.6,6);
\coordinate (p8) at (3.5,6);
\coordinate (p9) at (-9,6);
\coordinate (p10) at (-12,6);
\coordinate (p11) at (10,6);
\coordinate (p12) at (7.5,6);
\coordinate (p13) at (6,9.5);
\coordinate (p14) at (1,9);
\coordinate (p15) at (-2.8,9);
\coordinate (p16) at (-12,9);
\coordinate (p17) at (-6.4,9);
\coordinate (p18) at (10,9.2);
\coordinate (p19) at (-2,12);
\coordinate (p20) at (3,10.5);
\coordinate (p21) at (-12,13);
\coordinate (p22) at (7.7,11.1);
\coordinate (p23) at (4,15);
\coordinate (p24) at (-4,15);
\coordinate (p25) at (0,13.7);
\coordinate (p26) at (0,18);
\coordinate (au1) at (-7,12);
\coordinate (au2) at (5.5,6);
\coordinate (au3) at (6,6);
\draw[edge,draw=brown,very thick] (p0) -- (p1);
\draw[edge] (p0) -- (p2);
\draw[edge,draw=brown,very thick] (p0) -- (p3);
\draw[edge] (p0) -- (p4);
\draw[edge,draw=brown,very thick] (p0) -- (p5);
\draw[edge] (p1) -- (p7);
\draw[edge] (p1) -- (p9);
\draw[edge,draw=blue,thick] (p1) -- (p10);
\draw[edge,draw=blue,thick] (p1) -- (p14);
\draw[edge] (p2) -- (p6);
\draw[edge] (p2) -- (p8);
\draw[edge] (p2) -- (p9);
\draw[edge] (p2) -- (p13);
\draw[edge] (p3) -- (p11);
\draw[edge,draw=blue,thick] (p3) -- (p14);
\draw[edge] (p3) -- (au2);
\draw[edge] (au2) -- (p13);
\draw[edge,draw=blue,thick] (p3) -- (au3);
\draw[edge,draw=blue,thick] (au3) -- (p18);
\draw[edge] (p4) -- (p7);
\draw[edge] (p4) -- (p8);
\draw[edge] (p4) -- (p11);
\draw[edge] (p4) -- (p12);
\draw[edge] (p5) -- (p6);
\draw[edge,draw=blue,thick] (p5) -- (p10);
\draw[edge] (p5) -- (p12);
\draw[edge,draw=blue,thick] (p5) -- (p18);
\draw[edge] (p6) -- (p15);
\draw[edge] (p6) -- (p16);
\draw[edge] (p7) -- (p17);
\draw[edge] (p7) -- (p19);
\draw[edge] (p8) -- (p15);
\draw[edge] (p8) -- (p17);
\draw[edge] (p8) -- (p20);
\draw[edge] (p9) -- (p16);
\draw[edge] (p9) -- (p17);
\draw[edge] (p10) -- (p16);
\draw[edge,draw=brown,very thick] (p10) -- (au1);
\draw[edge,draw=brown,very thick] (au1) -- (p25);
\draw[edge] (p11) -- (p19);
\draw[edge] (p11) -- (p20);
\draw[edge] (p11) -- (p22);
\draw[edge] (p12) -- (p15);
\draw[edge] (p12) -- (p22);
\draw[edge] (p13) -- (p20);
\draw[edge] (p14) -- (p19);
\draw[edge,draw=brown,very thick] (p14) -- (p25);
\draw[edge] (p15) -- (p21);
\draw[edge] (p15) -- (p23);
\draw[edge] (p16) -- (p21);
\draw[edge] (p17) -- (p21);
\draw[edge] (p17) -- (p24);
\draw[edge] (p18) -- (p22);
\draw[edge,draw=brown,very thick] (p18) -- (p25);
\draw[edge] (p19) -- (p24);
\draw[edge] (p20) -- (p23);
\draw[edge] (p20) -- (p24);
\draw[edge] (p21) -- (p26);
\draw[edge] (p22) -- (p23);
\draw[edge] (p23) -- (p26);
\draw[edge] (p24) -- (p26);
\draw[edge] (p25) -- (p26);
\node[vertex] at (p0) {$\emptyset$};
\node[vertex,text=blue] at (p1) {$\scriptstyle{1,5,9}$};
\node[vertex] at (p2) {$\scriptstyle{1,4,10}$};
\node[vertex,text=blue] at (p3) {$\scriptstyle{3,7,8}$};
\node[vertex,text=red] at (p4) {$\scriptstyle{4,5,8}$};
\node[vertex,text=blue] at (p5) {$\scriptstyle{5,6,10}$};
\node[vertex] at (p6) {$\scriptstyle{1,4,5,6,10}$};
\node[vertex] at (p7) {$\scriptstyle{1,4,5,8,9}$};
\node[vertex] at (p8) {$\scriptstyle{1,4,5,8,10}$};
\node[vertex] at (p9) {$\scriptstyle{1,4,5,9,10}$};
\node[vertex,text=blue] at (p10) {$\scriptstyle{1,5,6,9,10}$};
\node[vertex] at (p11) {$\scriptstyle{3,4,5,7,8}$};
\node[vertex] at (p12) {$\scriptstyle{4,5,6,8,10}$};
\node[vertex] at (p13) {$\scriptstyle{1,3,4,7,8,10}$};
\node[vertex,text=blue] at (p14) {$\scriptstyle{1,3,5,7,8,9}$};
\node[vertex] at (p15) {$\scriptstyle{1,4,5,6,8,10}$};
\node[vertex] at (p16) {$\scriptstyle{1,4,5,6,9,10}$};
\node[vertex] at (p17) {$\scriptstyle{1,4,5,8,9,10}$};
\node[vertex,text=blue] at (p18) {$\scriptstyle{3,5,6,7,8,10}$};
\node[vertex] at (p19) {$\scriptstyle{1,3,4,5,7,8,9}$};
\node[vertex] at (p20) {$\scriptstyle{1,3,4,5,7,8,10}$};
\node[vertex] at (p21) {$\scriptstyle{1,4,5,6,8,9,10}$};
\node[vertex] at (p22) {$\scriptstyle{3,4,5,6,7,8,10}$};
\node[vertex] at (p23) {$\scriptstyle{1,3,4,5,6,7,8,10}$};
\node[vertex] at (p24) {$\scriptstyle{1,3,4,5,7,8,9,10}$};
\node[vertex,text=brown] at (p25) {$\scriptstyle{1,3,5,6,7,8,9,10}$};
\node[vertex] at (p26) {$\scriptstyle{1,3,4,5,6,7,8,9,10}$};
\end{tikzpicture}
\caption{Interval $(\hat{0},Y)_L$ in \cref{example:counter-example}} \label{fig:interval}
\end{figure}

Let $Y$ be a complement of $S_1 \cup S_2$ in $L$. Then $Y$ must contain $3$ and $9$. 
     But $3$ is only contained in $S_6$ and $9$ in only contained in $S_4$. Thus
     $S_4 \cup S_6 = \{1,3,5,7,8,9\} \subseteq Y$. Clearly, $S_4 \cup S_6$ is not yet a
     complement of $S_1 \cup S_2$. By
     $S_2 \subseteq S_4 \cup S_5 \cup S_6, S_4 \cup S_6 \cup S_7$ it follows that 
     $S_5,S_7 \not\subseteq Y$ which leaves only $S_3$ and $S_8$ to complete $Y$ to a complement of $X$. 
     We have 
     \[S_3 \cup S_4 \cup S_6 =  \{1,3,4,5,7,8,9,10\} \qand 
     S_4 \cup S_6 \cup S_8 =\{1,3,4,5,7,8,9\}\,.
     \]
     Since $S_3 \cup S_4 \cup S_6$ lacks $10$ we have that 
     $Y = S_4 \cup S_6 \cup S_8 =\{1,3,4,5,7,8,9\}$
     is the unique complement of $X$ in $L$. 
      Since $Y$ is maximal in $\overline{L}$ it follows from the homotopy complementation formula 
      \cite[Theorem 4.2]{BW83complements} that $\overline{L}$ is homotopy equivalent to a suspension of $(\hat{0},Y)_L$. 

     In the interval $(\hat{0},Y)_L$, shown in \cref{fig:interval}, the only complement to
     $X'=\{4,5,8\}$ is the maximal element $Y' = \{1,3,5,6,7,8,9,10\}$. Thus again by
     the homotopy complementation formula \cite[Theorem 4.2]{BW83complements} it follows that 
     $(\hat{0},Y)_L$ is homotopy equivalent to the suspension of 
     $(\hat{0},Y')_L$. In \cref{fig:interval} the elements of $(\hat{0},Y')_L$ colored blue. It follows that the dimension of $(\hat{0},Y')_L$ is $1$ and 
     its homology is
     concentrated in the homological dimension $1$, where it is one dimensional. Hence the
     $\rhk_i\big(\,(\hat{0},Y)_L;\KK\,\big) = 0$ for $i \neq 2$ and $=\KK$ for $i=2$. 

     As a consequence we get that $\rhk_k\big(\,\overline{L};\KK\,\big) = 0$ for $k \neq 3$ and
     $=\KK$ for $k =3$. Now let $a=0$ and $b=1$ then the above shows
     \begin{itemize}
         \item $a+b=k-2$,
         \item $\rhk_k\big(\,\overline{L};\KK\,\big) \neq 0$,
         \item $\rhk_{a}\big(\,(\hat{0},X)_L;\KK\,\big) =\KK$,
         \item $\rhk_{b}\big(\,(\hat{0},Y)_L;\KK\,\big) = 0$ for the unique $Y \in \MC(X)$.
         \end{itemize}
\end{example}

%%%%%%%%%%%%%%%%%%%%%%%%%%%%%%%%%%%%%%
\section{Complementary faces of simplicial complexes}\label{s:faces}
%%%%%%%%%%%%%%%%%%%%%%%%%%%%%%%%%%%%%%

This section is dedicated to the proof of \cref{thm:simplicialcomplex}, which provides affirmative evidence towards \cref{que:main}. 
We begin by defining the notion of complementary faces in a simplicial complex. We then establish a result concerning the existence of a cycle in a non-acyclic simplicial complex whose support contains complementary faces.

Given a simplicial complex $\Delta$, let $L=\ML(\Delta)$ be the face lattice of $\Delta$. We call two faces $\sigma, \tau \in \Delta$  {\it complements} if they are complements as elements of the lattice $L$. Equivalently, $\sigma$ and $\tau$ are complements if 
$$
\sigma \cup \tau \notin \Delta 
\qand 
\sigma \cap \tau = \emptyset.
$$
An immediate consequence of \cref{lem:intlattice} is the following reformulation of Bj\"orner and Walker's complementation theorem.

\begin{theorem}\label{thm:noncomplemented-contractible}
     If a simplicial complex $\Delta$ is not complemented (equivalently, if $\ML(\Delta)$ is
     not a complemented lattice), then $\Delta$ is contractible.
\end{theorem}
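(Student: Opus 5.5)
The plan is to deduce this directly from Bj\"orner and Walker's theorem applied to the face lattice, using the dictionary between faces of $\Delta$ and elements of $\ML(\Delta)$. The first step is to check the claimed equivalence in the parenthetical: $\Delta$ is complemented in the sense just defined, meaning every face has a complementary face, if and only if the lattice $\ML(\Delta)$ is complemented. For $\sigma\in\Delta$ with $\sigma\neq\emptyset$, a complement in $\ML(\Delta)$ cannot be $\hat{0}=\emptyset$, since $\sigma\vee\emptyset=\sigma\neq\hat{1}$. It also cannot be $\hat{1}$, since $\sigma\wedge\hat{1}=\sigma\neq\hat{0}$. Hence it is a face $\tau$ with $\sigma\cap\tau=\emptyset$, where the meet in $\ML(\Delta)$ is intersection, and with $\sigma\vee\tau=\hat{1}$, which means exactly $\sigma\cup\tau\notin\Delta$. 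The elements $\hat{0}$ and $\hat{1}$ are always complements of each other. So the two notions of complementedness coincide.

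The second step is to invoke the result recalled in the introduction: a non-complemented lattice is contractible (\cite[Theorem~3.3]{Bjo81}, \cite{BW83complements}). Equivalently, by the contrapositive of \cref{lem:intlattice}~(5), if $\ML(\Delta)$ is not complemented then $\overline{\ML(\Delta)}$ is contractible. The third step uses the fact recorded in \cref{s:prelim} that the order complex of $\overline{\ML(\Delta)}$ is the barycentric subdivision of $\Delta$, and hence homeomorphic to $\Delta$. Therefore $\Delta$ is contractible.

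There is no real obstacle here. The only point needing care is the degenerate elements: checking that a complement of a nonempty face must itself be a genuine nonempty face, so that the lattice-theoretic and simplicial notions match. I would also note the convention that $\emptyset$ is complemented by $\hat{1}$, which is not a face, so the definition of a complemented simplicial complex should be read for nonempty faces, or with $\hat{1}$ allowed. If $\Delta=\{\emptyset\}$ there are no nonempty faces, and such edge cases are handled by the lattice statement directly.
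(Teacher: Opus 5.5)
Your proposal is correct and follows the paper's route. The paper states this as an immediate consequence of \cref{lem:intlattice}~(5), i.e.\ Bj\"orner--Walker's theorem that a non-complemented lattice is contractible, applied to $\ML(\Delta)$, together with the homeomorphism between $\Delta$ and the order complex of $\overline{\ML(\Delta)}$. Your check that the simplicial and lattice notions of complement agree, including your remark about the empty face, simply makes explicit what the paper builds into its definition.
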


We write a simplicial $k$-cycle $\gamma$ for the simplicial complex as $\gamma=c_1\rho_1+\cdots+c_s\rho_s$ where all $c_i \in \field \setminus \{0\}$ and all the $\rho_i$ are distinct $k$-dimensional faces of $\Delta$. 
By the \textit{support} $\supp(\gamma)$ of $\gamma$ we mean the simplicial complex $\langle \rho_1,\ldots,\rho_s \rangle$.

\begin{lemma}\label{p:almost-complements} 
Let $\Delta$ be a simplicial complex such that $\rhk_{k}(\Delta;\field) \neq 0$ for some $k\geq 0$. Then, there exists a $k$-cycle $\gamma$ that is not a boundary such that for every $a,b \geq 0$ with $a+b=k$, there exist complementary faces $\sigma$ and $\tau$  in $\supp{\gamma}$ of dimensions $a$ and  $b$, respectively.
\end{lemma}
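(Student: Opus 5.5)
Choose $\gamma$ to be a non-bounding $k$-cycle whose support is inclusion-minimal among all non-bounding $k$-cycles; equivalently, among all $k$-cycles that are not boundaries, take one with the fewest $k$-faces in its support. The point of minimality is that no proper subcomplex of $\supp(\gamma)$ carries a nonzero $k$-cycle: if $\gamma'$ were a $k$-cycle on fewer faces of $\supp(\gamma)$, then either $\gamma'$ or a combination $\gamma - c\gamma'$ killing one face would be a non-bounding cycle with smaller support. In fact, since $\supp(\gamma)$ has dimension $k$, there are no $k$-boundaries inside it, and minimality gives that $Z_k(\supp\gamma)$ is one-dimensional, spanned by $\gamma$. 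As a consequence, every $k$-face $\rho$ of $\supp(\gamma)$ has each of its $(k-1)$-faces contained in at least two $k$-faces of the support, since $\partial\gamma=0$.

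Now fix $a,b\ge 0$ with $a+b=k$ and pick any facet $\rho_1$ of $\supp(\gamma)$; write $\rho_1=\sigma\sqcup\tau_0$ with $|\sigma|=a+1$, $|\tau_0|=b$. The face $\sigma$ has dimension $a$. I will look for a vertex $v\notin\rho_1$ such that $\tau=\tau_0\cup\{v\}\in\supp(\gamma)$. Then $\tau$ has dimension $b$, $\sigma\cap\tau=\emptyset$, and $\sigma\cup\tau=\rho_1\cup\{v\}$ has $k+2$ vertices. This union is not a face of $\supp(\gamma)$, since that complex has dimension $k$, so $\sigma$ and $\tau$ are complementary in $\supp(\gamma)$. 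This is the natural reading of \emph{complementary faces in $\supp\gamma$}. To find $v$, consider the $(k-1)$-face $\rho_1\setminus\{w\}$ for some $w\in\sigma$; this requires $a\ge0$, so $\sigma\neq\emptyset$. The cycle condition gives a second facet $\rho_2=(\rho_1\setminus\{w\})\cup\{v\}$ with $v\ne w$, hence $v\notin\rho_1$. Then $\tau_0\cup\{v\}\subseteq\rho_2$, so it is a face of the support.

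The main obstacle is interpreting the complementarity condition. If it is required in $\Delta$ rather than in $\supp(\gamma)$, then $\rho_1\cup\{v\}$ might lie in $\Delta$. In that case I would use minimality differently. Suppose every such choice gives $\rho_1\cup\{v\}\in\Delta$. Then I would modify $\gamma$ by the boundary of $(k+1)$-simplices of $\Delta$ to reduce the support, contradicting minimality. Alternatively, I would choose $\gamma$ to minimize support among all cycles homologous to it, so that no $(k+1)$-face $\rho\cup\{v\}$ of $\Delta$ can be used to shorten it. I expect this step to need care, possibly an induction replacing $\rho_1$ by a better facet.
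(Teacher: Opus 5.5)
\textbf{There is a genuine gap.} Your argument only gives $\sigma\cup\tau\notin\supp(\gamma)$, and that is not what the lemma asserts. In the paper, ``complementary faces'' means $\sigma\cap\tau=\emptyset$ and $\sigma\cup\tau\notin\Delta$, i.e.\ complements in $\ML(\Delta)$. This is exactly what the proof of \cref{thm:simplicialcomplex} uses to get $\sigma\vee\tau=\hat{1}$.

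Your version holds for dimensional reasons for every nonzero $k$-cycle. It uses neither the non-bounding hypothesis nor the minimal-support choice of $\gamma$, which signals it is not the intended statement. The real step is to find a facet $\rho$ of $\supp(\gamma)$, a vertex $x\in\rho$ and a neighbouring facet $(\rho\setminus\{x\})\cup\{v\}$ with $\rho\cup\{v\}\notin\Delta$. You leave this as a sketch, and the sketch does not close. If $\rho\cup\{v\}\in\Delta$, replacing $\gamma$ by $\gamma\mp c\,\partial(\rho\cup\{v\})$ removes $\rho$, but it can introduce up to $k$ new faces $(\rho\setminus\{x'\})\cup\{v\}$. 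So the size of the support need not drop, and support-minimality gives no contradiction.

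\textbf{The missing idea} is an extremal choice that is monotone under this move. Fix a linear order on the vertices and the induced lexicographic order on $k$-faces. Among all non-bounding $k$-cycles, choose $\gamma$ whose lex-smallest face $\rho=\min(\gamma)$ is as large as possible.

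The cycle condition at the face $\rho\setminus\{\max\rho\}$ gives a facet $(\rho\setminus\{\max\rho\})\cup\{y\}$ of $\supp(\gamma)$. Lex-minimality of $\rho$ forces $y>\max\rho$. Suppose $\omega=\rho\cup\{y\}$ were a face of $\Delta$. Subtracting the appropriate multiple of $\partial\omega$ kills $\rho$. Every face this introduces has the form $(\rho\setminus\{x'\})\cup\{y\}$ with $y>\max\rho$, hence is lex-larger than $\rho$. The resulting cycle is homologous to $\gamma$, so still non-bounding, and its minimal face exceeds $\rho$, a contradiction. Hence $\rho\cup\{y\}\notin\Delta$.

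Your partition step then works verbatim if you take $\rho_1=\rho$, put $w=\max\rho$ into $\sigma$ (possible since $a\ge 0$), and take $v=y$. This gives $\sigma\cup\tau=\rho\cup\{y\}\notin\Delta$. This is the mechanism of the paper's proof. It argues by contradiction over all $x\in\rho$, but only the neighbour across $\rho\setminus\{\max\rho\}$ is actually needed.
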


\begin{proof} 
Fix a linear order on the vertex set of $\Delta$. Extend this linear order to a lexicographic order on 
the $k$-dimensional faces. For each $k$-cycle $\alpha$ of $\Delta$ let $\min(\alpha)$ denote the smallest $k$-face of $\alpha$ under this order. We now fix $\rho$ to be the largest   $\min(\alpha)$ among all $k$-cycles $\alpha$ of $\Delta$ which are not boundaries, and let $\gamma$ be the corresponding $k$-cycle of $\Delta$, in other words
\begin{equation}\label{eq:gamma}
\gamma=c_1 \rho_1 + \cdots + c_s \rho_s
\qwhere
\rho=\rho_1.
\end{equation}
We claim that there exist vertices $x\in \rho$ and $y_x \in \Delta$ such that 
\begin{equation}\label{eq:assumption}
\big ( \rho \setminus \{x\} \big ) \cup \{y_x\} \in \supp(\gamma)
\qand 
\rho \cup \{y_x\} \notin \Delta. 
\end{equation}

Assume, to the contrary, that for every vertex $x\in \rho$ and $y_x \in \Delta$ such that $\big ( \rho \setminus \{x\} \big ) \cup \{y_x\} \in \supp(\gamma)$, we also have $\rho \cup \{y_x\} \in \Delta$.

Since for all $x \in \rho$ there is a vertex $y_x$ such that  $\rho \setminus \{x\} \cup \{y_x\} \in \supp(\gamma)$, the minimality of $\rho$ and the lexicographic order on the vertices of $\supp(\gamma)$ imply that $y_x>x$. 

By our assumption in \eqref{eq:assumption} $\rho \cup \{y_x\} \in \Delta$ for all $x \in \rho$, and in particular, $\rho$ is contained in a $(k+1)$-face of $\Delta$. 
Let $\omega = \rho \cup \{z\}$ be the maximal
$(k+1)$-face in $\Delta$ containing $\rho$. Since $y_x > x$ for all $x \in \rho$ we have that  $z$ is larger than the maximal element of $\rho$. 
Choose $u$ to be the minimal element of 
$\rho$. Then observe that $u$ and $z$ are, respectively, the smallest and the largest vertices in $\omega$.  

Now $\rho \in \supp(\partial(\omega))$, and using \eqref{eq:gamma} 
$$
\gamma'=\gamma - (-1)^k c_1\partial(\omega)
$$
is a homology $k$-cycle homologous to $\gamma$ and not containing $\rho$ in its support. By the choice of $\rho$ the cycle $\gamma'$ must contain a $k$-face $\rho'$ in its support which is
lexicographically smaller than $\rho$. 
Then, since $\rho$ is the smallest $k$-face in $\supp(\gamma)$, we must have $\rho' \in \partial(\omega)=\partial(\rho \cup \{z\})$ which implies that 
$$
\rho'=\big ( \rho \setminus \{x'\}\big ) \cup \{z\} 
\qforsome
x' \in \rho.
$$ 
On the other hand, since $z> x$ for all $x \in \rho$, we have $\rho' >\rho$. 
This contradicts our choice of $\rho'$ being lexicographically smaller than $\rho$. 

Therefore, there must be  vertices $x\in \rho$ and $y_x \in \Delta$ such that \eqref{eq:assumption} holds, or  more specifically 
$$
\big ( \rho \setminus \{x\} \big ) \cup \{y_x\} \in \supp(\gamma)
\qand 
\rho \cup \{y_x\} \notin \Delta. 
$$
Now since $|\rho\setminus \{x\}|=k=a+b$, we can partition $\rho \setminus\{x\}$ into subsets $\sigma'$ and $\tau'$ (possibly empty) where
$$
\sigma' \cup \tau'=\rho \setminus \{x\}, 
\quad
\sigma' \cap \tau'=\emptyset,
\quad
|\sigma'|=a, \qand
|\tau'|=b.
$$
Then the two faces 
$$
\sigma=\sigma'\cup \{x\} \subseteq \rho
\qand 
\tau=\tau'\cup \{y_x\} \subseteq \big ( \rho \setminus \{x\} \big ) \cup \{y_x\}
$$
of $\supp(\gamma)$ satisfy 
$$ \dim(\sigma) = a, 
\quad 
\dim(\tau) = b, 
\quad 
\sigma \cap \tau =\emptyset,
\qand 
\sigma \cup \tau \notin \Delta 
$$
as desired.
\end{proof}

Using the lemma we can now prove \cref{thm:simplicialcomplex}: that a non-acyclic simplicial complex has complementary faces with complementary homologies  in the sense of \cref{que:main}.

\begin{proof}[Proof \cref{thm:simplicialcomplex}]
Since $\overline{L}$ is homeomorphic to $\Delta$, we have $\rhk_{k}(\Delta;\field) \neq 0$. Fix $a,b\geq -1$ with $a+b = k-2$. Therefore, by \Cref{p:almost-complements}, there exist $\sigma, \tau \in \Delta$ such that
$$
\dim(\sigma) = a+1, 
\quad 
\dim(\tau) = b+1, 
\quad 
\sigma \cap \tau = \emptyset, 
\qand
\sigma \cup\tau \notin  \Delta.  
$$
Since $\sigma \cap \tau = \emptyset$, we get $\sigma \wedge \tau = \hat{0}$ in $L$ and since $\sigma \cup\tau \notin  \Delta$, we get $\sigma \vee \tau =\hat{1}$ in $L$. Thus, $\tau\in \MC(\sigma)$. Since the face poset of a simplicial complex is a simplicial poset, the intervals $[\hat{0}, \sigma]$ and $[\hat{0}, \tau]$ are respectively Boolean lattices on sets of $a+2$ and $b+2$ elements. Therefore,
$$
(\hat{0}, \sigma)_L \iso S^{a} 
\qand 
(\hat{0}, \tau)_L \iso S^{b}.
$$
This completes the proof.
\end{proof}

%%%%%%%%%%%%%%%%%%%%%%%%%%%%%%%%%%
\section{The meet-lattice of a Cohen-Macaulay simplicial complex} \label{sec:meet}
%%%%%%%%%%%%%%%%%%%%%%%%%%%%%%%%%%

A simplicial complex is called \textit{Cohen-Macaulay} over a field $\KK$ if 
 for all $\sigma \in \Delta$ one has that $$\rhk_i(\lk_\Delta(\sigma);\field) = 0 \text{ 
 for all } i < \dim(\,\lk_\Delta(\sigma)\,).$$ This notion was coined by Stanley in \cite{Sta77} based 
 on work by Hochster \cite{Hoc77} and Reisner \cite{reisner76cmrings} showing that 
 Stanley's definition is equivalent to $\field[\Delta]$ being a Cohen-Macaulay ring. 
 Shortly after that, Quillen in \cite{Quillen78fibertherem} introduced a homotopy
 theoretic strengthening. A simplicial complex $\Delta$ is called \textit{homotopically
 Cohen-Macaulay} if for all $\sigma \in \Delta$ one has that $\lk_\Delta(\sigma)$ is
 homotopy equivalent to a (possibly empty) wedge of spheres of dimension 
 $\dim(\lk_\Delta(\sigma))$. Clearly, any homotopically Cohen-Macaulay complex is Cohen-Macaulay over any field.
 We call a poset $P$ Cohen-Macaulay over $\KK$ or
 homotopically Cohen-Macaulay if its order complex has the respective property.

This section is centered around the following question.

\begin{question}\label{que:CM} 
For what (homotopically) Cohen-Macaulay lattices $L$ is the meet lattice $L_\wedge $ also (homotopically) Cohen-Macaulay?
\end{question}

The following criterion is a simple consequence of the definitions above.

 \begin{lemma}
     Let $P$ be a bounded poset. 
     \begin{enumerate}
         \item $P$ is Cohen-Macaulay over $\KK$ if and only if for all $x \leq  y$ in $P$ 
         $$\rhk_i\big(\,(x,y)_P;\KK\,\big) = 0 \qfor i < \dim((x,y)_P)\,;
         $$
         \item $P$ is homotopically Cohen-Macaulay over $\KK$ if and only if 
          $(x,y)_P$ is homotopy equivalent to a wedge
           of spheres of dimension of the order complex of $(x,y)_P$.
     \end{enumerate}
 \end{lemma}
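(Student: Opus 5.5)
The lemma is a direct translation of the definitions through the dictionary between links in the order complex and intervals in the poset. The plan is to identify every link in the order complex of $\overline{P}$ (equivalently of $P$, after adjoining or removing the bounds) with a join of open intervals. Then we use that joins of wedges of spheres, respectively of complexes with homology only in top degree, behave additively in dimension.

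Concretely, let $\Delta$ be the order complex of $\overline{P}$. A face is a chain $c = \{x_1 < \cdots < x_r\}$ in $\overline{P}$. Put $x_0=\hat{0}$ and $x_{r+1}=\hat{1}$. Then
$$\lk_\Delta(c) = (x_0,x_1)_P * (x_1,x_2)_P * \cdots * (x_r,x_{r+1})_P,$$
and $\dim \lk_\Delta(c) = \sum_{j}\big(\dim (x_j,x_{j+1})_P+1\big)-1$, with the convention $\dim\emptyset=-1$. For the empty chain the link is $\overline{P}=(\hat{0},\hat{1})_P$ itself.

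For the direction ($\Leftarrow$) in~(1), assume each open interval has vanishing reduced homology below its dimension. The Künneth formula for joins (as used in the proof of \cref{thm:complement}) gives
$$\rhk_{m+1}(A*B)\cong\bigoplus_{i+j=m}\rhk_i(A)\otimes\rhk_j(B).$$
Iterating this shows that a join of complexes whose reduced homology is concentrated in top degree again has reduced homology concentrated in top degree, namely $\dim(A*B)=\dim A+\dim B+1$. Applied to the displayed decomposition, every link satisfies the Cohen--Macaulay condition.

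For ($\Rightarrow$), take $x<y$ in $P$ and choose a chain $c$ that contains $x$ and $y$ (when they lie in $\overline{P}$) as consecutive elements. We can take $c=\{x,y\}$ minus whichever of $x,y$ is a bound. Its link is $(\hat{0},x)_P*(x,y)_P*(y,\hat{1})_P$, where the factors involving bounds are omitted. We must extract the vanishing for the single factor $(x,y)_P$. This is the main subtle point, since a join can have vanishing low homology even if a factor does not, for instance when another factor is acyclic.

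To handle it, I would instead choose $c$ to be $\{x,y\}$ together with maximal chains of $[\hat{0},x]$ and $[y,\hat{1}]$ refined through covers. Then all other factors are empty intervals $(u,v)$ with $v$ covering $u$, which are the neutral element for the join. The link is then exactly $(x,y)_P$, and the CM condition on this link gives the claim. The degenerate case $x=y$ is trivial.

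Part~(2) is proved identically, with homotopy types in place of homology. A join of wedges of spheres $\bigvee S^{p}$ and $\bigvee S^{q}$ is a wedge of spheres $S^{p+q+1}$, since $S^p*S^q\simeq S^{p+q+1}$ and joins distribute over wedges up to homotopy. The same choice of saturated chain isolates a single interval as a link for the converse.
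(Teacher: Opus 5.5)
Your proof is correct. The paper gives no proof of this lemma; it only calls it a simple consequence of the definitions. Your argument is the standard unpacking it leaves to the reader: links of chains are joins of open intervals, the Künneth formula for joins (respectively $S^p*S^q\simeq S^{p+q+1}$) gives one direction, and a chain saturated below $x$ and above $y$ makes $(x,y)_P$ itself a link for the converse. The one detail you leave implicit is the passage between $\Delta(P)$ and $\Delta(\overline{P})$. It is harmless, because the link in $\Delta(P)$ of any chain missing $\hat{0}$ or $\hat{1}$ is a cone and hence contractible.
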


\cref{que:CM} holds for  lattices of small dimensions: if
$\dim(\overline{L}) = 0$ then $L = L_\wedge$; if $\dim(\overline{L}) = 1$
then Cohen-Macaulayness of $L$ is equivalent to $\overline{L}$ being
connected. The elements $x$ in $\overline{L} \setminus \overline{L_\wedge}$ are exactly those for which $(x,\hat{1})_L$ consists
of a single point. Their removal cannot disconnect $\overline{L}$. It can
reduce the dimension though.

We show below that in the case of the face lattice $L = \ML(\Delta)$ of a Cohen-Macaulay simplicial complex $\Delta$, \cref{que:CM} has a positive answer.
The next lemma prepares for the analysis of the Cohen-Macaulay property of $\ML(\Delta)_\cap$. 

\begin{lemma}\label{l:claim}
Let $\Delta$ be a simplicial complex which is Cohen-Macaulay over 
a field $\KK$ and let  $\tau \in \ML(\Delta)_\cap$ be a face not contained in every facet of $\Delta$. 
\begin{enumerate}
    \item If $(\hat{0}, \tau)_{\ML(\Delta)_\cap} = \emptyset$, then $\dim(\tau)=0$.
    \item If $(\hat{0}, \tau)_{\ML(\Delta)_\cap} \neq \emptyset$, then $\dim(\gamma) = \dim(\tau)-1$ for every maximal element $\gamma$ of $(\hat{0}, \tau)_{\ML(\Delta)_\cap}$.  
\end{enumerate}
\end{lemma}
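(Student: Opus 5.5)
The idea is to use two standard consequences of the Cohen--Macaulay property. First, $\Delta$ and all of its links $\lk_\Delta(\gamma)$ are again Cohen--Macaulay, hence pure. Second, a Cohen--Macaulay complex is \emph{strongly connected}: its facet graph is connected, where two facets are adjacent if they share a codimension-one face. The second fact follows from purity together with connectedness of all links of dimension $\geq 1$. I will take both as known, and the main step is to produce from them an element of $\ML(\Delta)_\cap$ just below $\tau$.

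Recall that $\tau \in \ML(\Delta)_\cap$ means $\tau \neq \emptyset$ and $\tau = \bigcap_{F \supseteq \tau} F$, the intersection taken over all facets $F$ of $\Delta$ containing $\tau$. By hypothesis there is a facet not containing $\tau$.

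\emph{Proof of (1).} Strong connectivity gives a path in the facet graph from a facet containing $\tau$ to one not containing $\tau$. Along it we find adjacent facets $F' \supseteq \tau$ and $F'' \not\supseteq \tau$ with $F'' = (F' \setminus \{v\}) \cup \{w\}$. Necessarily $v \in \tau$, so $F'' \cap \tau = \tau \setminus \{v\}$. Then
$$\Big(\bigcap_{F \supseteq \tau} F\Big) \cap F'' = \tau \setminus \{v\}$$
is an intersection of facets. If $\dim(\tau) \geq 1$, this set is nonempty and hence lies in $(\hat{0},\tau)_{\ML(\Delta)_\cap}$. So if that interval is empty, then $\dim(\tau) = 0$.

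\emph{Proof of (2).} Let $\gamma$ be a maximal element of $(\hat{0},\tau)_{\ML(\Delta)_\cap}$, so $\emptyset \neq \gamma \subsetneq \tau$ and $\gamma$ is an intersection of facets. I run the same argument inside $\lk_\Delta(\gamma)$, which is Cohen--Macaulay and whose facets are exactly the sets $F \setminus \gamma$ for facets $F \supseteq \gamma$.

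Since $\gamma = \bigcap_{F \supseteq \gamma} F \subsetneq \tau$, some facet containing $\gamma$ does not contain $\tau$. Hence the face $\tau \setminus \gamma \neq \emptyset$ of the link is not contained in every facet of the link, while some facet of the link (coming from an $F \supseteq \tau$) does contain it. Strong connectivity of $\lk_\Delta(\gamma)$ again yields adjacent facets $F' \supseteq \tau$ and $F'' \supseteq \gamma$ with $F'' \cap \tau = \tau \setminus \{v\}$ for some $v \in \tau \setminus \gamma$.

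As before, $\tau \setminus \{v\} = \big(\bigcap_{F \supseteq \tau} F\big) \cap F''$ lies in $\ML(\Delta)_\cap$. Moreover $\gamma \subseteq \tau \setminus \{v\} \subsetneq \tau$, and $\tau \setminus \{v\} \neq \emptyset$ since it contains $\gamma$. Maximality of $\gamma$ forces $\gamma = \tau \setminus \{v\}$, so $\dim(\gamma) = \dim(\tau) - 1$.

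The main obstacle is justifying strong connectivity of Cohen--Macaulay complexes and of their links. I would cite it as standard (Björner), or sketch it by induction on dimension from purity and connectedness of the links.
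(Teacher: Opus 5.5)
Your proof is correct and follows essentially the same route as the paper: gallery (strong) connectedness of the Cohen--Macaulay complex produces adjacent facets $F' \supseteq \tau$ and $F'' \not\supseteq \tau$, and part (2) comes from the same argument carried out in the Cohen--Macaulay link $\lk_\Delta(\gamma)$. The only differences are minor. You exhibit $\tau\setminus\{v\}$ directly, where the paper argues that emptiness of the interval forces $\tau \cap \gamma_{i+1} = \emptyset$. You also check explicitly that $\tau\setminus\gamma$ is not contained in every facet of the link, a hypothesis the paper uses tacitly when it applies (1) there.
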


\begin{proof}
(1) Since $(\hat{0}, \tau)_{\ML(\Delta)_\cap} = \emptyset$, for every facet $\gamma'$ of $\Delta$ not containing $\tau$, we have $\tau \cap \gamma' = \emptyset$. 
Fix such a facet $\gamma' \not \supseteq \tau$, and let $\gamma$ be a facet containing $\tau$.
From \cite[p. 177]{Bjo80} we know that a Cohen-Macaulay simplicial complex is facet-connected, also called gallery connected. 
Hence there is a sequence of facets of
$\Delta$, called a gallery, 
with 
$$
\gamma = \gamma_0, \gamma_1,\ldots, \gamma_k = \gamma' \qwhere \dim(\gamma_i \cap \gamma_{i+1}) = \dim(\Delta)-1
\qfor 0 \leq i \leq k-1\,.
$$
Now let $i$ be minimal such that
$\tau \subseteq \gamma_i$ and $\tau \not\subseteq \gamma_{i+1}$. Then, $\tau \cap \gamma_{i+1} = \emptyset$, so $\tau \subseteq \gamma_{i} \setminus \gamma_{i+1}$. But $\dim(\gamma_{i} \setminus \gamma_{i+1}) = 0$. It follows
that $\dim(\tau) = 0$. 

(2)
Let $\gamma$ be a maximal element of $(\hat{0}, \tau)_{\ML(\Delta)_\cap}$. 
Since all elements of $\overline{\ML(\Delta)_\cap}$ are non-empty faces of
$\Delta$, 
we have $\gamma \neq \emptyset$. 
The interval $(\gamma,\tau)_{\ML_\cap(\Delta)}$ is isomorphic to the interval
$(\hat{0},\tau \setminus \gamma)$ in 
$$
\ML(\,\lk_\Delta(\gamma) \,)_\cap=
\{ \rho \setminus \gamma \st \rho \in \ML(\Delta)_\cap \qand \gamma \subseteq \rho\}
.$$
Since $\gamma$ is a maximal element of $(\hat{0}, \tau)_{\ML(\Delta)_\cap}$, the interval $(\gamma,\tau)_{\ML(\Delta)_\cap}$ is empty; thus $(\hat{0},\tau \setminus \gamma)_{\ML(\lk_\Delta(\gamma))_\cap}$ is empty. 
Since the link of a Cohen-Macaulay complex is Cohen-Macaulay, $\lk_\Delta(\gamma)$ is Cohen-Macaulay. Therefore, by (1), $\dim(\tau\setminus \gamma')=0$. Hence, $\dim(\gamma) = \dim(\tau)-1$.
\end{proof}

\begin{theorem} Let $\Delta$ be a simplicial complex.
\begin{enumerate}
    \item
If $\Delta$ is Cohen-Macaulay over $\KK$
then $\ML(\Delta)_\cap$ is a Cohen-Macaulay lattice  over $\KK$.
\item If $\Delta$ is homotopically Cohen-Macaulay then $\ML(\Delta)_\cap$
is a homotopically Cohen-Macaulay lattice. 
\end{enumerate}
\end{theorem}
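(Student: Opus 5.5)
We check the Cohen-Macaulay condition interval by interval: for every $\sigma < \tau$ in $M = \ML(\Delta)_\cap$, the open interval $(\sigma,\tau)_M$ must have vanishing reduced homology below its top dimension (respectively, be homotopy equivalent to a wedge of top-dimensional spheres). The intervals fall into three types: $(\sigma,\hat{1})_M$ with $\sigma \neq \hat{0}$, the full proper part $\overline{M}$, and lower intervals $(\sigma,\tau)_M$ with $\tau \neq \hat{1}$. In each case we compare with the face lattice, using that the CM property is inherited by links and that $\ML(\lk_\Delta(\sigma))_\cap$ is the upper part of $M$ above $\sigma$.

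\emph{Upper intervals.} For $\sigma \in \overline{M}$, \cref{lem:intlattice}(1) gives $(\sigma,\hat{1})_M = (\overline{M})_{>\sigma} \simeq \overline{\ML(\Delta)}_{>\sigma}$. The latter is the face poset of $\lk_\Delta(\sigma)$, so it is homeomorphic to $\lk_\Delta(\sigma)$. Taking $\sigma = \hat{0}$ in \cref{lem:intlattice}(3) gives $\overline{M} \simeq \Delta$. Hence the homology, and in the homotopical case the homotopy type, is the correct one. What remains is the dimension bookkeeping: the order complex of $(\sigma,\hat{1})_M$ must have dimension $\dim\lk_\Delta(\sigma)$, and the order complex of $\overline M$ must have dimension $\dim \Delta$. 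This requires showing that every maximal chain in $M$ has length $\dim\Delta + 2$, i.e.\ that $M$ is graded with rank function $\dim(\cdot)+1$ on $\overline{M}$.

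\emph{Gradedness.} This is the step \cref{l:claim} prepares for. By (2) of that lemma, every cover relation $\gamma \lessdot \tau$ inside $\overline{M}$ drops dimension by exactly one. By (1), every atom of $M$ is a vertex, provided the face in question is not contained in every facet. If there is a common core $\kappa = \bigcap$ facets $\neq \emptyset$, then $\Delta$ is a cone over $\kappa$ and the minimal element of $\overline M$ is $\kappa$ itself. We handle this case by passing to $\lk_\Delta(\kappa)$: it is CM, and $[\kappa,\hat 1]_M \cong \ML(\lk_\Delta(\kappa))_\cap$, while $\overline M$ is a cone and hence contractible. In fact all intervals above $\kappa$ reduce to the link case, so we may assume the core is empty. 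Facets of $\Delta$ all have dimension $\dim\Delta$ since CM complexes are pure, and they are the coatoms of $M$. Together these facts show that $M$ is graded by $\dim+1$, so every open interval $(\sigma,\tau)_M$ has order complex of dimension $\dim\tau - \dim\sigma - 2$ (with $\dim\hat{0} = -1$, $\dim \hat 1 = \dim\Delta+1$). This matches the link dimensions above.

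\emph{Lower intervals.} For $\tau \in \overline{M}$ we have $[\hat{0},\tau]_M = \{\rho \in M : \rho\subseteq\tau\}$, a meet-closed subfamily of the Boolean lattice $2^\tau$ containing $\tau$ and all of its vertices that lie in $M$. We use the order dual together with the closure trick of \cref{lem:closure}. The family $\{\rho\cap\tau\}$ consists of intersections of facets with $\tau$, so $[\hat 0,\tau]_M$ is itself the meet lattice $\ML(\Gamma)_\cap$, where $\Gamma$ is the simplicial complex on $\tau$ generated by the sets $F\cap\tau$ for facets $F \not\supseteq \tau$. Then \cref{lem:intlattice}(3) gives $(\hat0,\tau)_M \simeq \Gamma$ minus its top. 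Applying this in the link of $\sigma$ reduces general $(\sigma,\tau)_M$ to the case $\sigma=\hat 0$.

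The main obstacle is showing that $\Gamma$ has homology only in the top dimension $\dim\tau-2$, i.e.\ that it is a CM subcomplex of $\partial\tau$ (homotopically, a ball or sphere). The route we would try first is to observe that $\Gamma$ is the union of the faces $F\cap\tau$ for facets $F$ adjacent to $\tau$ along galleries. Using gallery connectedness in the links (as in the proof of \cref{l:claim}), we would then argue that $\Gamma$ is either all of $\partial\tau$, in which case the interval is a sphere, or a proper pure codimension-one subcomplex of $\partial\tau$ that is shellable or collapsible, in which case it is contractible. If this fails, the fallback is a Mayer–Vietoris induction over facets through $\tau$.
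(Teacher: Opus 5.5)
There is a genuine gap exactly at the step you flag as the main obstacle. You never show that $\Gamma$ has homology only in its top degree. The routes you sketch are left as plans: gallery arguments in links, collapsibility, or a Mayer--Vietoris induction over facets through $\tau$ (and those facets contribute nothing to $\Gamma$ anyway). The missing observation is one you already have.

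The facets of $\Gamma$ are precisely the maximal elements of $(\hat{0},\tau)_M$. Every nonempty $F\cap\tau$ with $F\not\supseteq\tau$ lies in $(\hat{0},\tau)_M$. Conversely, every $\rho\in(\hat{0},\tau)_M$ lies in such an $F\cap\tau$: otherwise every facet containing $\rho$ contains $\tau$, forcing $\tau\subseteq\rho$. By \cref{l:claim}(2), the same fact you used for gradedness, all of these maximal elements have dimension $\dim\tau-1$. So $\Gamma=\langle\tau\setminus\{v\} : v\in V\rangle$ for some nonempty $V\subseteq\tau$.
\begin{itemize}
\item If $V=\tau$, then $\Gamma=\partial\tau\cong S^{\dim\tau-1}$.
\item Otherwise any $u\in\tau\setminus V$ lies in every facet of $\Gamma$, so $\Gamma$ is a cone and hence contractible.
\end{itemize}
This is exactly the paper's argument, phrased there as: any two facets of $\Delta_\tau=\Gamma$ differ in one vertex, so every order is a shelling.

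Note also that the correct top degree is $\dim\tau-1$, not $\dim\tau-2$. Your own formula $\dim\tau-\dim\sigma-2$ with $\dim\hat{0}=-1$ gives this.

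Apart from this, your plan follows the paper: reduction to $\sigma=\hat{0}$ through links, then \cref{l:claim}. Your separate treatment of $(\sigma,\hat{1})_M$ via \cref{lem:intlattice}(1),(3) plus gradedness is correct. It also makes explicit the case $\tau=\hat{1}$, which the paper leaves implicit.

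A smaller inaccuracy, which the paper's proof shares: $[\hat{0},\tau]_M$ need not literally equal $\ML(\Gamma)_\cap$. Take the triangulated disk with facets $\{w,p,u\}$, $\{w,u,z\}$, $\{w,z,x\}$, $\{w,x,p\}$, $\{p,x,q\}$, and let $\tau=\{w,p,u\}$. Then $\{p\}=\tau\cap\{p,x,q\}\in M$. But $\Gamma=\langle\{w,p\},\{w,u\}\rangle$, and $\{p\}$ is not an intersection of facets of $\Gamma$.

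The conclusion $(\hat{0},\tau)_M\simeq\Gamma$ is nevertheless true. The right justification is that $(\hat{0},\tau)_M$ is the image of the closure operator $\rho\mapsto\bigcap\{F\cap\tau : F\not\supseteq\tau,\ \rho\subseteq F\}$ (over facets $F$ of $\Delta$) on the poset of nonempty faces of $\Gamma$. Then \cref{lem:closure} applies.
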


\begin{proof}
We prove (1). The proof of (2) follows along the same lines.

We need to check that every open interval $(\sigma,\tau)_{\ML(\Delta)_\cap}$
for $\sigma \leq \tau$ in $\ML(\Delta)_\cap$
has homology concentrated in dimension $\dim( (\sigma,\tau))$.
If $\sigma \neq \hat{0}$ then the interval $(\sigma,\tau)_{\ML(\Delta)_\cap}$ is isomorphic to the interval
$(\hat{0},\tau \setminus \sigma)_{\ML(\,\lk_\Delta(\sigma)\,)_\cap}$ in 
$$
\ML(\,\lk_\Delta(\sigma)\,)_\cap=
\{\, \rho \setminus \sigma \st \rho \in \ML(\Delta)_\cap \qand \sigma \subseteq \rho\,\}
.$$
Since the  link of a Cohen-Macaulay complex is Cohen-Macaulay, it suffices to consider the case 
$\sigma = \hat{0}$. Suppose $ \hat{0} \neq \tau \in \ML(\Delta)_\cap$.

If  $(\hat{0},\tau)_{\ML(\Delta)_\cap} =\emptyset$, or equivalently, $\tau$ is a minimal element of the lattice, then  $(\hat{0},\tau)_{\ML(\Delta)_\cap}$  has homology  concentrated in dimension $-1 = \dim(\,(\hat{0},\tau)_{\ML(\Delta)_\cap}\,)$, in which case we are done.

If $(\hat{0},\tau)_{\ML(\Delta)_\cap} \neq \emptyset$ and the intersection of all facets of $\Delta$ is non-empty, say $\pi$. Then, $\pi$ is the unique minimal element of $(\hat{0},\tau)_{\ML(\Delta)_\cap}$.  Hence, the homology  of $(\hat{0},\tau)_{\ML(\Delta)_\cap}$ vanishes in all dimensions and again we are done. 

It remains to consider the case where $(\hat{0},\tau)_{\ML(\Delta)_\cap} \neq \emptyset$ and the intersection of all facets of $\Delta$ is empty. Consider the simplicial complex 
$$
\Delta_\tau =\langle \gamma \subset \tau
\st 
\gamma \mbox{ maximal element in } (\hat{0},\tau)_{\ML(\Delta)_\cap}
\rangle \,.
$$
By \cref{l:claim} the facets of $\Delta_\tau$ are all of dimension $\dim(\tau)-1$, and therefore,  each two maximal elements $\gamma$ and $\gamma'$ differ by exactly one vertex, so any linear order on the facets of $\Gamma$ is a shelling order. Thus $\Delta_\tau$ is shellable, and hence homotopically Cohen-Macaulay.
Since every facet of $\Delta_\tau$ is the intersection of facets of $\Delta$ and
the facets of $\Delta_\tau$ are the maximal such intersections, it follows
that the interval $(\hat{0},\tau)_{\ML_\cap(\Delta)}$ is isomorphic to 
$\overline{\ML(\Delta_\tau)_\cap}$. 
In particular, since $\Delta_\tau$ is homotopically Cohen-Macaulay the homology of
$(\hat {0},\tau)_{\ML(\Delta_\tau)_\cap}$ is concentrated in dimension $\dim(\tau)-1$. 
\end{proof}

We now show that in general the answer to \cref{que:CM} is negative. Below is an example of a Cohen-Macaulay lattice $L$, where $L_\wedge$ is a non-Cohen-Macaulay  sublattice of $L$.
We refer the reader to \cite{JVcmposets} for the terminology used in the following example.

\begin{example}\label{ex:non-example}
    Let $L$ be a lattice on the set $\{0,\ldots,12\}$ that is the intersection of the permutations
        \begin{align*}
        &    \pi_1 = [0, 1,2,3,4,5,6,7,8,9,10,11,12]  \qand \\
        &    \pi_2 = [0,7,9,11,4,8,1,5,10,2,6,3,12]\; ;
    \end{align*}
    that is, $x \leq y$ in $L$ if and only if $x=y$ or $y$ is on right of $x$ in both $\pi_1$ and $\pi_2$.
    The lattice $L$ is shown in \cref{fig:L-vs-Lmeet} on the left. 
    Since $L$ is not a chain and it is intersection of two permutations, the dimension of $L$ is $2$.
    Furthermore, since $L$ is pure and the induced subposets of $L$ consisting of height $i$ and height $i + 1$ elements are connected for all $0 \leq i \leq \rank(L)-1$ (as one can see from \cref{fig:L-vs-Lmeet}), it follows that $L$ is Cohen-Macaulay \cite[Theorem 1]{JVcmposets}.
    
    Now the maximal elements of $\overline{L}$ are
    \[
    \facets{(L)} = \big\{s_1 = 3, \, s_2 = 6, \, s_3 = 10, \, s_4 = 11\big\}\,.
    \]
    We compute the elements of $\overline{L}$ which are the meet of maximal elements. Observe that
    \[
    2 = s_1\wedge s_2, \quad 5 = s_2\wedge s_3, \quad 9 = S_3\wedge S_4 \qand 1 = s_1\cap s_2\cap s_3,
    \]
    while the elements $4,7$ and $8$ can't be written as meet of maximal elements. The meet-lattice $L_\wedge$ is shown in \cref{fig:L-vs-Lmeet} on the right. One can see that $L_\wedge$ is not pure; indeed
    \[
     0 < 9 < 11 < 12 \quad \text{( colored blue)}  \qand 
        0 < 1 < 2 < 3 <12 \quad \text{(colored red)}
    \]
are two maximal chain of $L_\wedge$ length $3$ and $4$ respectively. Hence, it is not Cohen-Macaulay.
 
\end{example}

\begin{figure}[htbp]
  \centering
  \tikzset{
    hasse/.style={x=1.2cm, y=1.3cm, every node/.style={draw, rounded corners=2pt,
                  minimum width=6mm, minimum height=5mm, inner sep=1pt, font=\small}},
    keep/.style={},
    drop/.style={fill=gray!25, dashed},
    short/.style={fill=orange!20},
    edge/.style={thin},
    sedge/.style={thick, blue!80!black},
    redge/.style={thick, red!80!black},
  }

 \begin{tabular}{cc}
 \qquad \qquad 
    \begin{tikzpicture}[hasse]
      \node[keep] (top) at (2,4)  {$\scriptstyle{12}$};
      \node[keep] (A)   at (0.5,3) {$\scriptstyle{3}$};
      \node[keep] (B)   at (1.5,3) {$\scriptstyle{6}$};
      \node[keep] (D)   at (2.5,3) {$\scriptstyle{10}$};
      \node[keep] (C)   at (3.5,3) {$\scriptstyle{11}$};
      \node[keep] (r)   at (0.5,2) {$\scriptstyle{2}$};
      \node[keep] (s)   at (1.5,2) {$\scriptstyle{5}$};
      \node[keep] (t)   at (2.5,2) {$\scriptstyle{8}$};
      \node[keep] (u)   at (3.5,2) {$\scriptstyle{9}$};
      \node[keep] (b)   at (1,1)   {$\scriptstyle{1}$};
      \node[keep] (a)   at (2,1)   {$\scriptstyle{4}$};
      \node[keep] (c)   at (3,1)   {$\scriptstyle{7}$};
      \node[keep] (bot) at (2,0)   {$\scriptstyle{0}$};
      \draw[edge] (top) -- (A) (top) -- (B) (top) -- (D) (top) -- (C);
      \draw[edge] (A) -- (r) (B) -- (r) (B) -- (s) (D) -- (s) (D) -- (t) (D) -- (u) (C) -- (u);
      \draw[edge] (r) -- (b) (s) -- (b) (s) -- (a) (t) -- (a) (t) -- (c) (u) -- (c);
      \draw[edge] (b) -- (bot) (a) -- (bot) (c) -- (bot);
    \end{tikzpicture}
\qquad  \qquad & \qquad \qquad
    \begin{tikzpicture}[hasse]
      \node[keep] (top) at (2,4)  {$\scriptstyle{12}$};
      \node[keep] (A)   at (0.5,3) {$\scriptstyle{3}$};
      \node[keep] (B)   at (1.5,3) {$\scriptstyle{6}$};
      \node[keep] (D)   at (2.5,3) {$\scriptstyle{10}$};
      \node[keep] (C)   at (3.5,3) {$\scriptstyle{11}$};
      \node[keep] (r)   at (0.5,2) {$\scriptstyle{2}$};
      \node[keep] (s)   at (1.5,2) {$\scriptstyle{5}$};
      \node[keep] (u)   at (3.5,2) {$\scriptstyle{9}$};
      \node[keep] (b)   at (1,1)   {$\scriptstyle{1}$};
      \node[keep] (bot) at (2,0)   {$\scriptstyle{0}$};
      \draw[edge] (top) -- (B) (top) -- (D);
      \draw[edge] (B) -- (r) (B) -- (s) (D) -- (s) (D) -- (u);
      \draw[edge]  (s) -- (b);
      \draw[redge] (top) -- (A) -- (r) -- (b) -- (bot);
      \draw[sedge] (top) -- (C) -- (u) -- (bot);
    \end{tikzpicture} \qquad \qquad \\
    &\\
    The Cohen--Macaulay lattice $L$ 
    &
    The meet-lattice $L_\wedge$  
  \end{tabular}       
  \caption{The lattices $L$ and $L_\wedge$ in \cref{ex:non-example}}
  \label{fig:L-vs-Lmeet}
\end{figure}

%%%%%%%%%%%%%%%%%%%%%%%%%%%%%%%%%%%%%%%%%%
\section{Complementary Betti numbers of monomial ideals}\label{s:Betti}
%%%%%%%%%%%%%%%%%%%%%%%%%%%%%%%%%%%%%%%%%%

Statements about lattice  complementation and homology have direct consequences for  Betti numbers of monomial ideals.  
Let $S=\field[x_1,\ldots,x_n]$ be a polynomial ring in $n$ variables over a field $\field$, and $I=(\bm_1,\ldots,\bm_q)$ be a monomial ideal of $S$ generated by the monomials $\bm_1,\ldots,\bm_q$. A \textit{ minimal free resolution} of $S/I$ (see e.g., \cite{HHmonomialideals}) is the unique (up to isomorphism of complexes) exact sequence of free $S$-modules 
$$
0 \to S^{\beta_p} \to S^{\beta_{p-1}} \to \cdots \to S^{\beta_1} \stackrel{\partial}{\to} S
$$
where $\mbox{coker}(\partial)=S/I$. The positive integers $\beta_1,\ldots,\beta_p$ 
called the \textit{ Betti numbers} of $I$. Refining the minimal free resolution  into a \textit{ graded} or \textit{ multigraded} minimal free resolution leads to a refinement of the Betti numbers so that each Betti number can be written as a sum of graded or multigraded Betti numbers as follows:
$$
\beta_i(S/I)=\sum_{j \geq i} \beta_{i,j}(S/I) 
\qand 
\beta_{i,j}(S/I)=\sum_{\substack{\bm \in \LCM(I) \\ \deg(\bm)=j}} \beta_{i,\bm}(S/I)
\qfor j\geq i \geq 1.
$$

As indicated earlier, the multigraded Betti numbers of $S/I$ can be computed in terms of reduced homology of the order complex of open intervals in $\LCM(I)$~\cite[Theorem~2.1]{GPW99}:
\begin{equation}\label{eqn:Bettiformula} 
\beta_{i,\m}(S/I) = \dim_\field \ \rhk_{i-2}\big(\,(\hat{0},\m)_L;\field\,\big) \qfor \m\in L =\LCM(I)\,.
\end{equation}
  
Motivated by \cref{p:equivalences}, it is then natural to 
consider lattice complementation in the context of Betti numbers.

\begin{definition}[{\bf Complementary lcms}]\label{def:lcm-complement}
If $I$ is an ideal  of a polynomial ring minimally generated by monomials $\bm_1,\ldots,\bm_q$, let 
 $$
 L =\LCM(I) \qand
  \lcm(I)=\lcm(\bm_1,\ldots,\bm_q).
$$ 
Two monomials $\bm$ and $\bm'$ in $L$ are \textit{ complements} if 
\begin{enumerate}
\item $\lcm(\bm,\bm') =\lcm(I)$;
\item $\gcd(\bm,\bm') \notin I$.
\end{enumerate}
We denote the set of complements of $\bm$ by $\MC(\bm)$.
\end{definition}

Baclawski's~\cite{Bac77} result on lattice complementation then translates 
into the following statement: Suppose  $J$ is a monomial ideal of $S$ and $\beta_{i, \bm}(S/J) \neq 0$ for some $i>0$ and  $\bm \in \LCM(J)$.  Let $L=\LCM(I)$ where $I$ is generated by the minimal monomial generators $\bm_1,\ldots,\bm_q$ of $J$ which divide $\bm$. Then every $\bm' \in L$ has  a complement $\bm'' \in L$. In particular, 
$$\gcd(\bm',\bm'')\notin (\bm_1,\ldots,\bm_q) \qand
\lcm(\bm',\bm'')=\bm.
$$
This means that every multidegree strictly dividing that of a non-vanishing Betti number is complemented in the \say{induced} LCM-lattice.  Statement~($I$) in \cref{p:equivalences} then leads to the following definition, which takes into account complementary homologies.

\begin{definition}[{\bf Complementary Betti numbers}]  Let $I$ be a monomial ideal with non-vanishing multigraded Betti numbers 
$$\beta_{a,\m}(S/I) \neq 0 \qand \beta_{b,\m'}(S/I) \neq 0.
$$ Then  $\beta_{a,\m}$ and $\beta_{b,\m'}$ are called \textit{complementary} if 
\begin{enumerate}
    \item $\m' \in \MC(\m)$; 
    \item $\beta_{a+b,\lcm(I)} \neq 0$.
\end{enumerate}
If $\beta_{a,\m}$ and $\beta_{b,\m'}$ are complementary, we say $\beta_{a,\m}$ is \textit{ complemented} (similarly $\beta_{b,\m'}$). 
\end{definition}

The definition of  complementary multigraded Betti numbers  was inspired by the question of subadditivity of degrees of syzygies of monomial ideals. The relevant question in this context is the following (see~\cref{p:equivalences}~($I$) and~\cite[Question~1.1]{Faridilatticecomplements}).
\begin{question}\label{ques:Bettilcmlattice}
    If $I$ is a monomial ideal, 
    $\beta_{a+b,\lcm(I)}(S/I)\neq 0$ for some $a,b>0$, are there complementary 
    Betti numbers $\beta_{a,\m}(S/I)\neq 0$ and $\beta_{b,\m'}(S/I)\neq 0$?
\end{question}

We briefly describe how \cref{ques:Bettilcmlattice} relates to the subadditivity of degrees of syzygies.
For a positive integer $a$, define
\[
t_a(S/I) = \max\{j \st \beta_{a,j} \neq 0\}.
\]
The degrees of Betti numbers of $S/I$ satisfy the \textit{subadditivity property} if
$$
t_{a+b}(S/I) \leq t_a(S/I) +t_b(S/I) \qforall a,b >0 
$$
with $a+b$ bounded above by  the projective dimension of $S/I$. An affirmative answer to  \Cref{ques:Bettilcmlattice} would establish the subadditivity property for all monomial ideals, with the argument going as follows:  assume, for convenience,  that $I$ is a square-free monomial ideal with non-vanishing  Betti number $\beta_{i,x_1\cdots x_n}(S/I) \neq 0$, making $t_i(S/I)=n$. Then if $\bm$, $\bm'$, $a$, and $b$ are as in \cref{ques:Bettilcmlattice}, we will have
$$
t_a(S/I) + t_b(S/I)
\geq \deg(\m) +\deg(\m')
\geq n 
=t_i(S/I).
$$
In particular, this argument shows that condition~(1) in \cref{def:lcm-complement} is sufficient for settling the subadditivity question for monomial ideals. Indeed, a positive answer to \Cref{ques:Bettilcmlattice} and hence to the subadditivity question was given in \cite{synor24subadditivity} under the weaker assumption that $\m \vee \m' = \lcm(I)$, rather than requiring $\m$ and $\m'$ to be complements.  

While we are still not able to give a full answer to \cref{ques:Bettilcmlattice},  \cref{thm:complement} allows us to get much closer to one. 
A consequence of \cref{thm:complement} for multigraded Betti numbers can now be stated.

\begin{corollary}\label{c:Betti-complemented} If $I$ is a monomial ideal with $\beta_{i,\lcm(I)}(S/I)\neq 0$ for some $i\geq 2$, then for every $\m \in \LCM(I)\setminus \{\hat{0},\lcm(I)\}$ there are $\bn,\bm' \in \LCM(I) \setminus \{\hat{0}, \lcm(I)\}$ and complementary Betti numbers 
$$
\beta_{a,\bn}(S/I) \neq 0 \qand 
\beta_{b,\bm'}(S/I) \neq 0 
\qwhere a+b=i, \quad  a,b>0,
$$ 
and where  
$$
\bn \mid \m \qand 
\bm' \in \MC(\m) \cap \MC(\bn).
$$
In particular,
\[
t_{a+b}(S/I) \leq t_a(S/I) + t_b(S/I).
\]
\end{corollary}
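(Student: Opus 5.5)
The plan is to translate \cref{thm:complement} into Betti number language through \eqref{eqn:Bettiformula}, applied with $L=\LCM(I)$. Set $k=i-2\geq 0$. The hypothesis $\beta_{i,\lcm(I)}(S/I)\neq 0$ says exactly that $\rhk_{k}\big(\,(\hat{0},\hat{1})_L;\field\,\big)=\rhk_k(\overline{L};\field)\neq 0$, since $\lcm(I)=\hat{1}$. Now fix $\m\in\LCM(I)\setminus\{\hat{0},\lcm(I)\}$, so that $\m\in\overline{L}$. \cref{thm:complement} with $x=\m$ then provides:
\begin{itemize}
\item $y\in\MC(\m)$ and $z\in(\hat{0},\m]_L\cap\MC(y)$;
\item integers $a',b'\geq -1$ with $a'+b'=k-2=i-4$;
\item $\rhk_{b'}\big((\hat{0},y)_L;\field\big)\neq 0$ and $\rhk_{a'}\big((\hat{0},z)_L;\field\big)\neq 0$.
\end{itemize}
Put $\bm'=y$, $\bn=z$, $a=a'+2$ and $b=b'+2$. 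By \eqref{eqn:Bettiformula} we get $\beta_{a,\bn}(S/I)\neq 0$ and $\beta_{b,\bm'}(S/I)\neq 0$, with $a+b=i$ and $a,b\geq 1$.

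Next comes the translation of lattice complements into the lcm/gcd conditions of \cref{def:lcm-complement}. In $\LCM(I)$ the join is the lcm, so $x\vee y=\hat{1}$ means $\lcm(x,y)=\lcm(I)$. The meet $x\wedge y$ is the lcm of all minimal generators dividing both $x$ and $y$. Hence $x\wedge y=\hat{0}$ holds exactly when no generator divides $\gcd(x,y)$, that is, when $\gcd(x,y)\notin I$. So lattice complements coincide with complementary lcms, which gives $\bm'\in\MC(\m)\cap\MC(\bn)$. Since $\bn=z\leq\m$ in $L$, we have $\bn\mid\m$.

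It remains to check that $\bn,\bm'$ avoid $\hat{0}$ and $\lcm(I)$, and that the two Betti numbers are complementary.
\begin{itemize}
\item $\bn\in(\hat{0},\m]$ forces $\bn\neq\hat{0}$, and $\bn\neq\lcm(I)$ because $\bn\mid\m$ with $\m\neq\lcm(I)$.
\item If $\bm'$ were $\hat{0}$, then $\m\vee\bm'=\m\neq\hat{1}$. If $\bm'$ were $\hat{1}$, then $\m\wedge\bm'=\m\neq\hat{0}$. Either way $\bm'$ would not be a complement of $\m$, a contradiction.
\item Condition (2) of complementary Betti numbers is the hypothesis $\beta_{a+b,\lcm(I)}=\beta_{i,\lcm(I)}\neq 0$.
\end{itemize}

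Finally, the subadditivity consequence. Choose $i$ with $t_i(S/I)$ attained at some multidegree $\bm_0$, so $\beta_{i,\bm_0}\neq 0$. Apply the above to the ideal generated by the minimal generators dividing $\bm_0$; its lcm lattice is the interval $[\hat{0},\bm_0]$, and its Betti numbers in multidegrees dividing $\bm_0$ agree with those of $I$ by \eqref{eqn:Bettiformula}. If $\bm_0$ is a minimal generator, then $i=1$ and there is nothing to prove. Otherwise the interval has some element strictly between $\hat{0}$ and $\bm_0$ to use as $\m$. From $\lcm(\bn,\bm')\mid\lcm(\m,\bm')=\bm_0$ and $\bm_0=\lcm(\m,\bm')$ we get $\deg\bn+\deg\bm'\geq\deg\lcm(\bn,\bm')$. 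The main obstacle sits here: we need $\lcm(\bn,\bm')=\bm_0$. This holds because $\bm'\in\MC(\bn)$, and we check the join is taken inside $[\hat{0},\bm_0]$, which agrees with the join in $L$.
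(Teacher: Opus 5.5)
Your proposal is correct and follows the paper's proof: apply \cref{thm:complement} to $L=\LCM(I)$ with $k=i-2$ and $x=\m$, take $\bn=z$ and $\bm'=y$, and shift homological degrees by two via \eqref{eqn:Bettiformula}. Your bookkeeping $a'+b'=k-2=i-4$ is the correct one (the paper's written proof says $a'+b'=i-2$, which is a slip). Your explicit checks fill in details the paper leaves implicit: that lattice complements match the gcd/lcm conditions, that $\bn,\bm'\notin\{\hat{0},\lcm(I)\}$, and that $t_i=\deg\lcm(\bn,\bm')\leq\deg\bn+\deg\bm'$.
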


\begin{proof}
Since $\beta_{i,\lcm(I)}(S/I)\neq 0$, it follows from \eqref{eqn:Bettiformula} that $\rhk_{i-2}(\overline{\LCM(I)}; \field) \neq 0$. 
 By \Cref{thm:complement}, for every
$\m\in \overline{\LCM(I)}$, there exist elements $\bm' \in \MC(\m)$, $\bn \in (\hat{0},\m]_{L}\cap \MC(\bm')$, together with integers
$a',b'\geq -1$ satisfying $a'+b'=i-2$,
such that
\[
\rhk_{a'}\big((\hat{0},\n)_L;\field\big)\neq 0
\qquad\text{and}\qquad
\rhk_{b'}\big((\hat{0},\m')_L;\field\big)\neq 0.
\]
The proof follows by setting $a=a'+2$ and $b=b'+2$ and using \eqref{eqn:Bettiformula}.
\end{proof}

The following statements are immediate consequences of \cref{thm:complement} and \cref{cor:1complemented}.
\begin{corollary}
 Let $I$ be a monomial ideal with $\beta_{i,\lcm(I)}(S/I)\neq 0$ and $\m \in \LCM(I)$. 
 \begin{enumerate}
     \item If $\m$ is a minimal element of $\MC(\m')$ for every $\m'\in \MC(\m)$ (or in particular, for $\m' \in \MC(\bm)$ chosen as in \Cref{thm:complement}), then $\beta_{a,\m} \neq 0$ for some $a$. 
     \item If $\m \in \MinGen(I)$, then there is $\m' \in \MC(\m)$ with 
     $\beta_{i-1,\m'}(S/I) \neq 0$.
\end{enumerate}
\end{corollary}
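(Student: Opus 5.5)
The plan is to specialize \cref{thm:complement} and \cref{cor:1complemented} to $L=\LCM(I)$ and translate back through \eqref{eqn:Bettiformula}. Throughout I take $\m\in\overline{L}$, i.e. $\m\neq 1,\lcm(I)$, since this is the setting of those results, and $i\geq 2$ as in \cref{c:Betti-complemented}. Then $\beta_{i,\lcm(I)}(S/I)\neq 0$ gives $\rhk_{k}(\overline{L};\field)\neq 0$ with $k=i-2\geq 0$.

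First I would check that the notion of complement in \cref{def:lcm-complement} agrees with lattice complementation in $L$. The join in $L$ is the $\lcm$, so condition~(1) says exactly $\m\vee\m'=\hat{1}$. For the meet, $\m\wedge\m'$ is the lcm of all minimal generators dividing both $\m$ and $\m'$. Hence $\m\wedge\m'=\hat{0}=1$ if and only if no minimal generator divides $\gcd(\m,\m')$, that is, if and only if $\gcd(\m,\m')\notin I$. So $\MC(\m)$ means the same thing in both settings.

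For~(1), apply \cref{thm:complement} with $x=\m$. This produces $\m'=y\in\MC(\m)$ and $z\in(\hat{0},\m]_L\cap\MC(\m')$ with $\rhk_{a'}\big((\hat{0},z)_L;\field\big)\neq 0$ for some $a'\geq -1$. Since $z\leq \m$, $z\in\MC(\m')$, and $\m$ is minimal in $\MC(\m')$ by hypothesis, we get $z=\m$. By \eqref{eqn:Bettiformula} this gives $\beta_{a'+2,\m}(S/I)\neq 0$, so $a=a'+2\geq 1$ works. The hypothesis only needs to hold for the particular $\m'$ produced by the theorem, which explains the parenthetical remark in the statement.

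For~(2), a minimal generator $\m\in\MinGen(I)$ is an atom of $\LCM(I)$, i.e. a minimal element of $\overline{L}$ (assuming $\m\neq\lcm(I)$). \cref{cor:1complemented} then gives $\m'\in\MC(\m)$ with nonvanishing homology of $(\hat{0},\m')_L$.

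The one point requiring care, which I expect to be the only real obstacle, is the degree bookkeeping. In the proof of \cref{cor:1complemented} we have $a=-1$, and $a+b=k-2$ then forces $b=k-1=i-3$, so $\rhk_{i-3}\big((\hat{0},\m')_L;\field\big)\neq 0$. By \eqref{eqn:Bettiformula} this is $\beta_{i-1,\m'}(S/I)\neq 0$, as claimed. I would state this index check explicitly, since the text of that corollary's proof mentions $b=k+1$, whereas $b=k-1$ is the value consistent with its statement.
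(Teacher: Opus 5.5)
Your proposal is correct and is essentially the paper's argument: the paper simply declares both parts immediate consequences of \cref{thm:complement} and \cref{cor:1complemented}, translated through \eqref{eqn:Bettiformula}, which is exactly what you do. Your additional checks are all correct and supply details the paper leaves implicit: that \cref{def:lcm-complement} coincides with lattice complementation in $\LCM(I)$, that minimal generators are atoms, and that $a=-1$ forces $b=k-1=i-3$ (so the ``$b=k+1$'' in the proof of \cref{cor:1complemented} is indeed a typo).
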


Besides the special cases that appeared in~\cite{Faridilatticecomplements,FM22breakinghomology}, \cref{c:Betti-complemented} is the best known answer to \cref{ques:Bettilcmlattice}. It is worth noting that not every multigraded Betti number should be expected to be complemented, as demonstrated by the example below.

\begin{example}[{\bf Not every Betti number is complemented}]\label{ex:noncomplemented}

    Let $S =\field[x_1,\ldots,x_{10}]$ and 
    \[
    I = (x_1x_2x_5, x_1x_4x_{10}, x_2x_4x_{10}, x_1x_5x_9, x_2x_5x_7, x_3x_7x_8, x_5x_6x_{10}, x_4x_5x_8).
    \]
    \cref{example:counter-example} contains a discussion on the LCM lattice of this ideal. Using  \eqref{eq:lcm2}, we observe that 
    $\beta_{i,x_1\cdots x_{10}} \neq 0$ only when $i=5$. Consider the element $\m = x_1x_2x_5x_7$ which has a unique complement 
     $\m^\perp = x_1x_3x_4x_5x_6x_7x_8x_9x_{10}$ in $\LCM(I)$ (see \cref{fig:interval}). However, $\beta_{i,\m}\neq 0$ only when $i=2$, and $\beta_{i,\m^\perp}\neq 0$ only when $i=4$. Therefore, $\beta_{2, \m}$ does not have a complementary Betti number. 
      \end{example}

Under the hypothesis of \Cref{c:Betti-complemented}, let $\m \in \LCM(I)\setminus \{\hat{0},\lcm(I)\}$ be such that $\beta_{a,\m}$ admits a complementary Betti number for some $a>0$, say $\beta_{b,\m'}$. One might then expect that, in \Cref{c:Betti-complemented}, the choices $\n = \m$ and $\bu = \m'$ would suffice. However, this need not be the case, as demonstrated in the following example. 

\begin{example}
    Let $S = \rationals[x_1,\ldots,x_{9}]$ and $$I = (x_1x_2x_{9}, x_1x_2x_6, x_2x_3x_8, x_6x_7x_8, x_1x_6x_8, x_1x_3x_8, x_3x_4x_5, x_6x_8x_{9}).$$
    Computations in Macaulay2~\cite{M2} show that $\beta_{i, x_1 \cdots x_9} \neq 0$ if and only if $i = 5$.
    Now consider the element $\m = x_1 x_2 x_6 x_8$.
    The set of complements of $\m$ is
\[
\MC(\m) = \{\m' = x_3 x_4 x_5 x_6 x_7 x_8 x_9, \m'' = x_2 x_3 x_4 x_5 x_6 x_7 x_8 x_9\}.
\]
Moreover, $\beta_{b,\m'} \neq 0$ if and only if $b = 3$, whereas $\beta_{c,\m''}\neq 0$ if and only if $c = 4$. Thus, $\beta_{2,\m}$ and $\beta_{3,\m'}$ are complementary Betti numbers. However, the interval $(\m', x_1 \cdots x_9) \setminus \MC(\m)$ is acyclic. For the element $\m$, the elements $\n$ and $\bu$ in \Cref{c:Betti-complemented} are $x_1x_2x_6$ and $\m''$, respectively.
 \end{example}

%%%%%%%%%%%%%%%%%%%%%%%%%%%%%%%%%%
\section{Open questions} \label{sec:discussion}
%%%%%%%%%%%%%%%%%%%%%%%%%%%%%%%%%%

As mentioned in the introduction Bj\"orner's result \cite[Theorem 3.3]{Bjo81}
postulates the existence of not necessarily distinct complements $y$ and $y'$  to a given $x$ such that $y$ is a join of atoms and $y'$ is a meet
of coatoms.
\cref{que:main} asks for pairs of complements $x$ and $y$ and strengthens the 
condition of being a
join of atoms to $(\hat{0},y)_L$ having non-trivial homology in a specific homological degree.
Strengthening the meet condition from \cite[Theorem 3.3]{Bjo81} in a similar way, one could ask the following
question.

\begin{question} \label{que:extmain-new}
Let $L$ be a lattice and suppose that
\begin{enumerate}
\item $\rhk_k\big(\,\overline{L};\KK\,\big) \neq 0$ for some $k \geq 0$,
\item $a,b \geq -1$ and $a+b = k-2$.
\end{enumerate}
Does there exist $x \in \overline{L}$ and complements $y$ and $y'$ of $x$ such that
\begin{align*}
&\rhk_a\big(\,(\hat{0},x);\KK\,\big) \neq 0, 
\qquad 
\rhk_b\big(\,(x,\hat{1});\KK\,\big) \neq 0, \\
&\rhk_b\big(\,(\hat{0},y);\KK \,\big) \neq 0,  
\qquad
\rhk_a\big(\,(y',\hat{1});\KK \,\big) \neq 0\,?
\end{align*}
\end{question}

\cref{p:geometric} provides a positive answer to \cref{que:extmain-new} for geometric lattices of rank $k+2$. Moreover, in this case  one could choose any $x$ and one could achieve $y = y'$.
Recall that a lattice $L$ is called a {\it geometric lattice} (see \cite[pp.~4]{Rot64} or \cite{Bjo92}) if every element of $L$ is the join of
atoms, and whenever $x$ and $y$ in $L$ cover $x\wedge y$, then $x\vee y$ covers both $x$ and $y$.  It is known that a geometric lattice of rank $k+2$ is non-acyclic~\cite[Theorem 4(a)]{Rot64} with homology concentrated in dimension $k$ (\cite[Theorem 4.1]{Fol66}).

It follows that  the answer to \cref{que:extmain-new} is \say{yes} for
geometric lattices $L$. Condition (1) of \cref{que:extmain-new} implies that
$L$ must be of rank $k+2$. 

\begin{proposition}\label{p:geometric}
For integers  $a,b \geq -1$, let $L$ be a geometric lattice of rank $a+b+4$. Then for every $x \in \overline{L}$ of rank $a+2$, there exists $y \in \MC(x)$ (of rank  $b+2$) such that
\begin{align*}
&\rhk_a\big(\,(\hat{0},x);\KK\,\big) \neq 0, 
\qquad 
\rhk_b\big(\,(x,\hat{1});\KK\,\big) \neq 0, \\
&\rhk_b\big(\,(\hat{0},y);\KK \,\big) \neq 0,  
\qquad
\rhk_a\big(\,(y,\hat{1});\KK \,\big) \neq 0\,.
\end{align*} 
\end{proposition}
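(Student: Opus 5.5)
Our plan is to use the standard structure of geometric lattices: every interval of a geometric lattice is again geometric, and a geometric lattice of rank $r$ has its proper part homotopy equivalent to a nonempty wedge of $(r-2)$-spheres (Folkman's theorem \cite[Theorem 4.1]{Fol66}, together with the nonvanishing of the M\"obius function \cite[Theorem 4(a)]{Rot64}). So for $x$ of rank $a+2$, the interval $[\hat{0},x]$ is geometric of rank $a+2$, giving $\rhk_a((\hat{0},x);\KK)\neq 0$. The upper interval $[x,\hat{1}]$ is geometric (upper intervals of geometric lattices are geometric) of rank $(a+b+4)-(a+2)=b+2$, giving $\rhk_b((x,\hat{1});\KK)\neq 0$. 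The same reasoning applied to any $y$ of rank $b+2$ yields the two remaining nonvanishing statements. The whole problem therefore reduces to producing a complement $y$ of $x$ with $\rk(y)=b+2$.

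For this we would use semimodularity of the rank function, $\rk(u)+\rk(v)\geq \rk(u\vee v)+\rk(u\wedge v)$, together with the atomic structure. Choose atoms greedily: start with $y_0=\hat{0}$, and, as long as $x\vee y_i\neq\hat{1}$, pick an atom $p\not\leq x\vee y_i$ (which exists since $\hat{1}$ is the join of atoms) and set $y_{i+1}=y_i\vee p$. In a geometric lattice joining an atom not below an element raises the rank by exactly one, so $\rk(x\vee y_{i+1})=\rk(x\vee y_i)+1$ and $\rk(y_{i+1})= \rk(y_i)+1$ (since $p\not\le y_i$). Hence after exactly $\rk(\hat{1})-\rk(x)=b+2$ steps we get $y$ with $x\vee y=\hat{1}$ and $\rk(y)=b+2$. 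Semimodularity then gives $\rk(x\wedge y)\leq \rk(x)+\rk(y)-\rk(\hat{1})=0$, so $x\wedge y=\hat{0}$ and $y\in\MC(x)$.

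The main point needing care is the justification of the two lattice-theoretic inputs from the paper's definition of geometric lattice (atomic plus the covering condition): namely that the covering condition implies semimodularity of rank (so that $L$ is graded and the rank inequality holds), and that upper intervals $[x,\hat{1}]$ are again atomic — any element above $x$ is $x\vee$ (atoms), so its elements are joins of covers of $x$ of the form $x\vee p$. These are classical facts \cite{Bjo92}, and we would cite them rather than reprove them. With these in hand, the degenerate cases $a=-1$ or $b=-1$ (where $x$ or $y$ is an atom and the relevant open interval is empty, with $\rhk_{-1}(\emptyset)\neq 0$) are covered by the same argument.
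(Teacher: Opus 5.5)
Your proof is correct and follows essentially the same route as the paper. Your greedy choice of atoms $p\not\le x\vee y_i$ is the lattice-theoretic form of the paper's step of extending a basis of the flat $X$ to a basis of the underlying matroid. After that, both arguments use the semimodular rank inequality to force $x\wedge y=\hat{0}$. Both then use the fact that intervals of geometric lattices are geometric, with Folkman and Rota, to get the four nonvanishing homology groups.
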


\begin{proof}
Set $k= a+b+2$. Since $L$ is a geometric lattice 
of rank $a+b+4=k+2$ we have $\rhk_k\big(\,\overline{L};\KK\,\big) \neq 0$. 
As indicated in \cite{Bjo92}, $L$ is the lattice of flats of a matroid $M$ with $\rk(M)=\rk(L)$, where  $\rk$ is the rank function of $L$ and $M$. 
For a flat $X$ of rank $a+2 \geq 1$ in $L$ choose a basis
$e_1,\ldots, e_{a+2}$ of $X$ and extend this basis to a basis $e_1,\ldots, e_{k+2}$ of $M$. Let 
$Y$ be the flat with basis $e_{a+3},\ldots, e_{k+2}$. Then $Y$ is of rank $(k+2)-(a+2) = k-a = b+2$.  
By construction
$X \vee Y = M = \hat{1}$ in $L$. On the other hand, as $L$ is a geometric lattice, we have  
$$
\begin{array}{ccccccccccc}
&&\rk(X)&+&\rk(Y)&\geq&\rk(X \vee Y)&+&\rk(X\wedge Y)&&\\
&&\rotatebox{90}{=}&&\rotatebox{90}{=}&&\rotatebox{90}{=}&&\rotatebox{90}{=}&&\\
k+2&=&(a+2)&+&(b+2)&\geq&(k+2)&+&\rk(X\wedge Y)&&\\
\end{array}
$$
It follows that $X \wedge Y = \hat{0}$. 
In the geometric lattice $L$ for any flat $Z$ the interval $[\hat{0},Z]$ is a geometric lattice of
rank $\rk(Z)$ and the interval $[Z,\hat{1}]$ is a geometric lattice of rank $(k+2)-\rk(Z)$ (see \cite[\P 3]{Bjo92}), hence $\rhk_{k-\rk(Z)}\big(\,(Z,\hat{1});\KK \,\big) \neq 0$.

In the case $Z=X$ we have $\rk(X)=a+2$ and $(k+2)-\rk(X)=b+2$, implying that
$$
\rhk_a\big(\,(\hat{0},x);\KK\,\big) \neq 0
\qand 
\rhk_b\big(\,(x,\hat{1});\KK\,\big) \neq 0\,.
$$
The case $Z=Y$ is similar. This completes the proof.
\end{proof}

The next class of examples is a class for which the positive answer to
\cref{que:extmain-new} hinges on the existence of an element $x$ with the
desired homological properties. Again, if this element exists then 
one can achieve $y = y'$.

\begin{example} \label{ex:ortho}
    A lattice $L$ is called \textit{ortho-complemented} (see e.g. \cite[Chapter 3.2]{BB05}) 
    if there is a map $\perp : L \rightarrow L$ satisfying for all $x,y \in L$:
    \begin{itemize}
        \item $x \vee x^ \perp = \hat{1}$ and $x \wedge x^ \perp = \hat{0}$,
        \item $x \leq y$ $\iff$ $y^ \perp \leq x^ \perp$,
        \item $(x^ \perp)^ \perp = x$.
    \end{itemize}
    It is an immediate consequence of the axioms that for all $x \in L$ the interval
    $(\hat{0},x)_L$ is isomorphic to the order dual of $(x^\perp,\hat{1})$ and $(x,\hat{1})_L$ is
    isomorphic to the order dual of $(\hat{0},x^\perp)_L$.
    From this the positive answer to \cref{que:extmain-new} follows for $y = x^ \perp$
    for an ortho-complemented lattice $L$ if and only if  
    from $\rhk_{k}(\,\overline{L};\KK\,) \neq 0$ the existence of
    an $x \in L$ with $\rhk_a \big( \,(\hat{0},x)_L;\KK\,\big), \rhk_b \big(\,(x,\hat{1})_L;\KK\,\big)\neq 0$ can be deduced. Thus any class of ortho-complemented lattice satisfying this
    assumption gives rise to a positive answer to \cref{que:extmain-new}.
    \end{example}

The following example shows that there are large classes of ortho-complemented lattices
which fail to have $x$ as demanded in the preceding example. 

\begin{example}
    Let us consider the weak Bruhat order $L$ for a finite Coxeter group $W$ with 
    Coxeter generating system $S$. By \cite[Corollary 3.2.2]{BB05} ortho-complementation is given
    by $x^\perp = x w_S$. Here for any $S' \subseteq S$ we let $w_{S'}$ be the longest word in 
    $S'$. By \cite[Theorem 3.2.7]{BB05} it follows
    that $\rhk_k(\,\overline{L};\KK\,) \neq 0$ if and only if $k = |S|-2$. Now let $a,b$ be integers 
    such that $a+b = k-2$.
    Then the same theorem implies that
    $\rhk_a\big(\,(\hat{0},x)_L;\KK\,\big) \neq 0$ if and only if $x$ is the longest word in some subset
    $S' \subseteq S$ of size $a+2$ and $0$ for
    all other $x$. This also shows that if $y$ is a complement to $x$ with 
    $\rhk_b\big(\,(\hat{0},y)_L;\KK\,\big) \neq 0$ then $y$ is the longest word in 
    some subset $S'' \subseteq S$ of size $b+2$. By $x \wedge y = \hat{0}$ this implies
    $S' \cap S'' = \emptyset$. Now $a+b = k-2 = |S|-2-2$ implies that $S' \cup S'' = S$.
    Conversely, for any partition of $S$ into an $a+2$ set $S'$ and a $b+2$ element set $S''$
    the longest word $x$ on $S'$ and the longest word $y$ on $S''$ are complements.
    This in particular, shows that the answer to \cref{que:main} is positive for the
    weak Bruhat order. 
    Now consider the interval $(x,\hat{1})_L$ for $x$ the longest word on 
    an $a+2$ element subset $S'$ of $S$. In order to have 
    $\rhk_b\big(\,(x,\hat{1})_L;\KK\,\big) \neq 0$ again \cite[Theorem 3.2.7]{BB05}
    we must have that there is a subset $S'' \subseteq S$ of size $b+2$ such that
    for the longest word $y$ in $S''$ we have that $xy$ is the longest word in $S$. 
    By $a+b = k-2 = |S|-4$ and the fact that every element of $S$ must appear in $xy$
    it follows again that $S' \cap S'' = \emptyset$ and $S' \cup S'' = S$.
    Standard arguments show that this can only be the case if the Coxeter diagram of $S$
    is disconnected and the diagrams of $S'$ and $S''$ are unions of some of its connected
    components. In particular, it never happens if $W$ is an irreducible Coxeter group.
 \end{example}
  
Note that in the counterexample above, the dimension of the order complex 
of the lattice is strictly larger than the homological dimension in which non-vanishing homology occurs.
Indeed, in \cref{p:geometric} the number $k$ is the dimension of the order complex
of $\overline{L}$.

The following statement implies that if $k$ is the dimension of the order complex of $\overline{L}$ 
then we always find $x$ satisfying the conditions of \cref{que:extmain-new}.

\begin{proposition} \label{lem:updown}
Let $L$ be a lattice such that the order complex of $\overline{L}$ has
dimension $k=a+b+2$ for some $a,b \geq -1$
and let $\gamma$ be a $k$-cycle of the order complex.

\begin{enumerate} 

\item If $x_0 < \cdots < x_k$ is a chain in the 
support of $\gamma$ then 
$$\rhk_{a} \big(\,(\hat{0},x_{a+1})_L;\KK\,\big) \neq 0 \text{ and } 
\rhk_{b} \big(\,(x_{a+1},\hat{1})_L;\KK\,\big) \neq 0\,.$$

\item If  there are chains
$$
x_0 < x_1 < \cdots < x_k 
\qand 
y_0 < y_1 < \cdots < y_k
$$ 
in the support of $\gamma$ such that 
$x_{a+1}$ and $y_{b+1}$ are complements, then 
\begin{align*}
&\rhk_a\big(\,(\hat{0},x_{a+1});\KK\,\big) \neq 0, 
\qquad 
\rhk_b\big(\,(x_{a+1},\hat{1});\KK\,\big) \neq 0, \\
&\rhk_b\big(\,(\hat{0},y_{b+1});\KK \,\big) \neq 0,  
\qquad
\rhk_a\big(\,(y_{b+1},\hat{1});\KK \,\big) \neq 0\,.
\end{align*} 
\end{enumerate}
\end{proposition}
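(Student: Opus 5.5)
The approach is a top-dimensional-cycle restriction argument. Since $\dim \overline{L} = k$, there are no $(k+1)$-chains, so the $k$-cycle $\gamma$ is a nonzero homology class and is not homologous to anything else. Write $\Delta$ for the order complex of $\overline{L}$ and fix a maximal chain $c = \{x_0<\cdots<x_k\}$ in $\supp(\gamma)$. Let $x = x_{a+1}$.

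The first step is to express $\lk_\Delta(x)$ as a join,
$$\lk_\Delta(x) = (\hat{0},x)_L * (x,\hat{1})_L.$$
Every $k$-simplex of $\Delta$ containing $x$ has the form $\sigma\cup\{x\}\cup\tau$, with $\sigma$ a chain in $(\hat{0},x)$ and $\tau$ a chain in $(x,\hat{1})$. Because $\dim\overline{L}=k$, the chains $\sigma$ and $\tau$ have sizes $i+1$ and $j+1$ with $i+j = k-2$. For the chain $c$ these sizes are exactly $a+1$ and $b+1$. Hence $\dim(\hat{0},x)\geq a$ and $\dim(x,\hat{1})\geq b$. Moreover, $\dim(\hat{0},x)+\dim(x,\hat{1})+2\leq k$, so both inequalities are equalities: $\dim(\hat{0},x)=a$ and $\dim(x,\hat{1})=b$.

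The second step is to produce a top cycle on each side. I will use the standard restriction (cap with the vertex $x$). Define $\gamma_x$ as the sum of the terms of $\gamma$ whose simplex contains $x$, with $x$ removed (with sign). This is a $(k-1)$-cycle in $\lk_\Delta(x)$, because the link chain map commutes with boundary up to sign on faces containing $x$. It is nonzero, since it contains $c\setminus\{x\}$. As $\lk_\Delta(x)$ has dimension $a+b+1=k-1$, this top-dimensional nonzero cycle gives a nonzero class in $\rhk_{k-1}$ of the join.

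By the Künneth formula for joins (as used in the proof of \cref{thm:complement}),
$$\rhk_{k-1}(A*B)=\bigoplus_{i+j=k-2}\rhk_i(A)\otimes\rhk_j(B),$$
and since $\dim A = a$ and $\dim B = b$, only the term $i=a$, $j=b$ can be nonzero. Therefore both $\rhk_a\big((\hat{0},x)_L\big)$ and $\rhk_b\big((x,\hat{1})_L\big)$ are nonzero, which proves (1). A more direct alternative is to write the top chain $\gamma_x=\sum \alpha_{\sigma}\otimes\beta_{\tau}$ and read off the coefficient slices: fixing $\tau_0 = \{x_{a+2},\dots,x_k\}$, the coefficients give a nonzero top $a$-cycle in $(\hat{0},x)$, and symmetrically a nonzero top $b$-cycle in $(x,\hat{1})$. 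Top cycles are automatically non-boundaries.

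Part (2) then follows by applying (1) twice: once to the chain $x_\bullet$, which is in the support of $\gamma$, with the split $(a,b)$; and once to the chain $y_\bullet$ with the roles of $a$ and $b$ swapped, since $k = b+a+2$ as well. The complement hypothesis on $x_{a+1}$ and $y_{b+1}$ is only needed to match the format of \cref{que:extmain-new}; it plays no role in the homology conclusions.

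The main obstacle is verifying that the restricted chain $\gamma_x$ really is a cycle of the link, with correct signs, and that its slices are cycles. I would handle this with the explicit identification
$$\partial(\sigma * x * \tau) = \partial\sigma * x*\tau \pm \sigma * x * \partial\tau + (\text{faces omitting } x).$$
Coefficients of faces omitting $x$ cannot cancel against faces containing $x$, so $\partial\gamma = 0$ forces the $x$-part to vanish. From there, the cycle condition for the slices reduces to the corresponding statement on each side.
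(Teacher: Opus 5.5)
Your proposal is correct and is essentially the argument the paper intends. The paper leaves (1) as an immediate consequence of the simplicial differential, which is exactly your coefficient-slice argument; your dimension count $\dim(\hat{0},x_{a+1})_L=a$, $\dim(x_{a+1},\hat{1})_L=b$ is what makes the slices top-dimensional cycles. The paper then obtains (2) by applying (1) to both chains with $a$ and $b$ swapped, just as you do, and your detour through the link and the K\"unneth formula for joins is only a repackaging of the same computation.
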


\begin{proof}
 The proof of~(1) follows immediately from the definition of the simplicial differential and is left to
the reader. Statement~(2) is an immediate consequence of~(1) applied to $y_{b+1}$ and with the
    roles of $a$ and $b$ interchanged.
\end{proof}

\cref{lem:updown} leads to the following question, to which we have no counterexample. 

\begin{question} 
    Is the answer to \cref{que:extmain-new} positive if $k$ coincides with the 
    dimension of the order complex of $\overline{L}$?
\end{question}

%\bibliography{./ref.bib}{}
%\bibliographystyle{alphaurl}

\newcommand{\etalchar}[1]{$^{#1}$}

\end{document}